\newtheorem{thm}{Theorem}[section]
\newtheorem*{thm*}{Theorem}
\newtheorem{lemma}[thm]{Lemma}
\newtheorem{prop}[thm]{Proposition}
\newtheorem{corr}[thm]{Corollary}
\theoremstyle{definition}
\newtheorem{exmple}[thm]{Example}
\theoremstyle{remark}
\newtheorem*{rmq}{\textit{Remark}}
\newtheorem{rmk}[thm]{\textit{Remark}}
\newtheorem{rmks}[thm]{\textit{Remarks}}
\renewcommand{\proof}{\noindent\textit{Proof}\/: \,\,}
\newenvironment{diagram}{
\begin{matrix}}{\end{matrix}}
\newcommand{\C}{{\mathbb{C}}}
\newcommand{\Q}{{\mathbb{Q}}}
\newcommand{\R}{{\mathbb{R}}}
\newcommand{\bF}{{\mathbb{F}}}
\newcommand{\Z}{{\mathbb{Z}}}
\newcommand{\bP}{{\mathbb{P}}}
\newcommand{\PP}{{\mathbb{P}}}
\newcommand\germ[1]{{\mathfrak{#1}}}
\newcommand{\eh}{\germ h}%
\newcommand{\eo}{\mathfrak{o}}%
\newcommand{\es}{\mathfrak{s}}%
\newcommand{\eu}{\mathfrak{u}}%
\newcommand{\cA}{\mathcal{A}}
\def\cl#1#2{\text{\rm Cl}^ {#1}({#2})}
\newcommand{\comp}{\raise1pt\hbox{{$\scriptscriptstyle\circ$}}}
\def\half{\frac 12}
\def\id{\mathop{\rm id}\nolimits}
\def\ii{{\rm i}}
\def\lwedge{\mathsf{\Lambda}}
\def\lset{\{}  
\def\rset{\}}  
\def\set#1{\lset#1\rset} 
\def\st{\mid}   
\def\sett#1#2{\lset #1 \st #2 \rset}  
\renewcommand\setminus{-}
\def\qf#1{\langle #1 \rangle}
\def\tr{\mathop{\rm Tr}\nolimits}
\newcommand\Tr{{}^{\mathsf{T}}\kern-0.9pt} 
\def\vrulecup{\kern1.30pt\cup\kern-4.15pt\vrule height 6.0pt depth 0pt}
 \def\mapright#1{\mathop{\vbox{\ialign{
                                ##\crcr
    ${\scriptstyle\hfil\;\;#1\;\;\hfil}$\crcr
 \noalign{\kern2pt\nointerlineskip}
    \rightarrowfill\crcr}}\;}}
\def\mapleft#1{\mathop{\vbox{\ialign{
                                ##\crcr
    ${\scriptstyle\hfil\;\;#1\;\;\hfil}$\crcr
 \noalign{\kern2pt\nointerlineskip}
    \leftarrowfill\crcr}}\;}}
\newcommand\rarrow[3]{\smash{\mathop{\hbox to#3{\rightarrowfill}}\limits
^{\scriptstyle#1}_{\scriptstyle#2}}}
\newcommand\larrow[3]{\smash{\mathop{\hbox to#3{\leftarrowfill}}\limits
^{\scriptstyle#1}_{\scriptstyle#2}}}
\newcommand\darrow[3]{\llap{$\scriptstyle #1$}
\left\downarrow\vbox to#3{}\right.\rlap{$\scriptstyle #2$}}
\def\into{\hookrightarrow}
\newcommand\aut{\operatorname{Aut}}
\newcommand\End{\mathop{\rm End}\nolimits}
\newcommand\gl[1]{\operatorname{GL}({#1})}
\newcommand\im{\operatorname{Im}}
\newcommand{\NS}{\mathop{\rm NS}}
\newcommand{\MWL}{\mathop{\rm MWL}}
\newcommand\ogr[2]{\operatorname{O}^{#2}({#1})}
\def\rank{\operatorname{rank}}
\newcommand\slgr[1]{\operatorname{SL}({#1})}
\newcommand\so[2]{\operatorname{SO}^{#2}({#1})}
\newcommand\smpl[1]{\operatorname{Sp}({#1})}
\newcommand\su[1]{\operatorname{SU}({#1})}
\newcommand\ugr[2]{\operatorname{U}^{#2}({#1})}
\title{Abelian Fourfolds of Weil type and certain K3 Double Planes}
\author{Giuseppe Lombardo, Chris Peters and Matthias Sch\"utt}
\date{April 9  2013}
\begin{document}
\maketitle

\begin{abstract}
Double planes branched in 6 lines give a famous example of K3 surfaces.
Their moduli are well understood and related to Abelian fourfolds of Weil type.
We compare these two moduli interpretations
and in particular divisors on the moduli spaces.
On the K3 side, this is achieved with the help of elliptic fibrations.
We also study the Kuga-Satake correspondence on these special divisors.
\end{abstract}
 
\section*{Introduction}

The wonderful geometry of double planes branched in 6 lines has been addressed extensively in the
articles \cite{MSY,Mats}. When the lines are in general position such double planes are K3 surfaces and their moduli are well known and are determined through the period map. 

The target space for the period map is defined as follows. Consider the lattice
\begin{equation}\label{eqn:LatticeT}
T=U^2\perp \qf{-1}\perp \qf{-1}.
\end{equation}
Here $U$ is the hyperbolic lattice with basis $\set{e,f}$ and $\qf{e,e}=\qf{f,f}=0$, $\qf{e,f}=1$ and for $n\in\Z$ the notation $\qf{n}$ means that we have  a one dimensional lattice with basis $\set{g}$ such that $\qf{g,g}=n$. We write $z=(z_1,z_2,z_3,z_4)\in T\otimes\C$ and form the associated domain 
\[
D(T):= \sett{z\in \bP(T\otimes\C)}{\qf{z,z}=0,\, \qf{z,\bar z}>0}
\]
The orthogonal group $\ogr{T}{}$ acts on $D(T)$, a space with two connected components; the  commutator subgroup $\so{T}{*}$  of $\ogr{T}{}$ preserves the connected components, distinguished by the sign of $\im(z_3/z_1)$. Let  $ \mathbf{D}_4$ be the one with the positive sign. The target of the period map is
\begin{equation}\label{eqn:ModSpace}
\mathbf{M}=\mathbf{D}_4/\so{T}{*}.
\end{equation}
 This is the principal moduli space in this article.

The source of the period map is the configuration space $U_6$ of 6 ordered  lines in good  position.\footnote{This means that the resulting sextic has only ordinary nodes.}
The  linear group $\gl {3;\C}$ acts on this space and we form the quotient
\[
\begin{array}{lcl}
\mathbf{X} :&=& U_6 /\gl{3;\C}: \text{\bf configuration space of 6 ordered lines}\\
&&\hspace{6.5em} \text{\bf   in good  position in $\bP^2$.}
\end{array}
\]
The symmetric group $\germ S_6$ acts on this space  and  the quotient $\mathbf{X} $ is the configuration space of 6 non-ordered lines in good position. One of the peculiarities of the number 6 is that there is a  further  
(holomorphic) involution  $*$ on 6 lines which comes from the  
correlation map  with respect to a conic in the plane (see \S~\ref{subsec:6Lines}) and which we call \emph{the correlation involution}. 
Six-tuples of lines in good position related by this involution correspond to isomorphic double covering K3-surfaces.
The explanation of why this is true\footnote{The referee urged us to find an explanation of this since  it could not be found   in the literature.}  uses beautiful classical geometry related to Cremona transformations for which we gratefully acknowledge the source \cite{Do-Ort}.
See Prop.~\ref{Cremona} and Remark~\ref{LinesAndCremona}.

Furthermore, the involution commutes with the action of $\germ S_6$.     
So the period map descends to this quotient and  by \cite{MSY}  this yields   a biholomorphic map
\[
\mathbf{X} /H\mapright{\simeq} \mathbf{M},\quad  H:= \germ S_6  \times \set{*}\ 
\]
In other words we may identify our moduli space with the quotient of the configuration space of 6 unordered lines in good
 position by the correlation involution. 
 
 In loc.~cit.\    some special divisors on $\mathbf{X} $ (and hence on $\mathbf M$) are studied,
in particular  the divisor called $XQ$ which corresponds to 6 lines all tangent to a common conic $C$. The associated double covers are  K3 surfaces with rich geometry. Indeed this 3-dimensional variety is the moduli space $\mathbf M_{2,2}$ of  curves of genus $2$ (with a level 2 structure). To see this note that the conic $C$ meets the ramification divisor $R$ (the 6 lines) in 6 points and on the K3 surface  this gives a genus 2 curve $D$ lying over it. In \cite[\S 0.19]{MSY} it is explained that  the Kummer surface $J(D)/\set{\pm 1}$ associated to the jacobian $J(D)$ of  $D$  is isomorphic to the double cover branched in $R$ and that there is a natural level 2 structure on $J(D)$. Furthermore,   the period map induces a biholomorphism
\begin{equation}\label{eqn:XQiso}
\mathbf M_{2,2} = XQ \mapright{\simeq} (\eh_2 / \Gamma(2))^0 \into \mathbf{X} ,\quad \Gamma= \smpl{2,\Z}/\pm 1,
\end{equation} 
where the image is Zariski-open in $\eh_2 / \Gamma(2)$ (in fact, in loc. cit. an explicit partial compactification $\bar {\mathbf{X} }$ of $X$ is described such that the closure of $\mathbf M_{2,2}$ in $\bar {\mathbf{X} }$ is exactly the space $\eh_2 / \Gamma(2)$).

In \cite{MSY, Her} one also finds a relation with a certain moduli space of Abelian 4--folds. We recall that principally polarized Abelian varieties of dimension $g$ (without any further restriction) are parametrized by the Siegel upper half space $\eh_g$.  For a  generic such Abelian variety $A$ its rational endomorphism ring   $\End_\Q A$ is isomorphic to $\Q$, but special Abelian varieties have larger endomorphism rings and are parametrised by subvarieties of $\eh_g$. Here we consider $4$--dimensional $A$  with $\Z(\ii)\subset \End(A)$ where the  endomorphisms are assumed to preserve the principal polarization. 
These are   parametrized by the $4$-dimensional domain \footnote{As usual, for a matrix $W$, we abbreviate $W^*=\Tr \bar W$.}
 \[
\mathbf{H}_2:= \left\{ W\in M_{2\times 2}(\C) \mid  \frac{1}{2\ii}(W-W^*)>0\right\} \simeq \ugr{(2,2)}{}/\ugr{2}{}\times\ugr{2}{} 
\]
which is indeed (see e.g. \cite[\S1.2]{Her})  a subdomain of the Siegel upper half space $\eh_4$ by means of the embedding 
\begin{equation}
\label{eqn:H2andSiegel}
\left.\begin{array}{lcl}
\iota: \mathbf{H}_2 & \into & \eh_4\\
W& \mapsto& U\begin{pmatrix} W&0 \\0 & \Tr W \end{pmatrix} U^*,\\
 &&U= \frac {1 }{\sqrt{2}} \begin{pmatrix} \ii\mathbf{1}_2& -\ii \mathbf{1}_2\\
 \mathbf{1}_2 & \mathbf{1}_2\end{pmatrix}.
\end{array}\right\}
\end{equation}
As for all polarized Abelian varieties,  the  
complex structure of such Abelian fourfolds $A$ is faithfully reflected in the polarized weight one Hodge structure on  $H^1(A)$. The second cohomology carries an induced  weight $2$ Hodge structure. The extra complex structure on $A$  (induced by multiplication with $\ii$)   
makes it possible to single out a rational sub Hodge structure $T(A)\subset H^2(A)$ of rank $6$ with Hodge numbers $h^{2,0}=1$, $h^{1,1}=4$. Such Hodge structures are classified by points in the domain $\mathbf{D}_4$ we encountered before.  Indeed, there is an 
isomorphism
\[
\mathbf{H}_2 \mapright{\sim} \mathbf{D}_4   \simeq \ogr{2,4}{} /\ogr{2}{}\times \ogr{4}{}
\]
which is induced by an  isomorphism between classical real Lie groups which on the level of Lie algebras gives the well known isomorphism  $\es\eu(2,2)\simeq \es\eo(6)$. 
This isomorphism is implicit in \cite{MSY}. One of the aims of this article is to review this using the explicit and classically known isomorphism between corresponding Lie groups. See  \S~\ref{sec:IsoCl}; the corresponding Hodge theoretic discussion is to be found  in \S~\ref{sec:ModAV}. This gives an independent and coordinate free presentation of the corresponding results in \cite{MSY,Mats}.  In particular, on the level of moduli spaces we find (see \S~\ref{ssec:isoms}):
\[
 \mathbf{M}:=  \mathbf{H}_2/\ugr{ (2,2)  ;\Z[\ii]}{*}   \mapright{\simeq}   \mathbf{D}_4/\so{T}{*}.
\]
The group $\ugr{ (2,2)  ;\Z[\ii]}{*}  $ is an  extension of the unitary group $\ugr{(2,2);\Z[\ii]}{}  $  (with coefficients in the Gaussian integers)  by an involution as explained  below (see eqn.~\eqref{eqn:ExtU22}).

One of the new results in our paper is the study of the N\'eron-Severi lattice using the geometry of elliptic pencils. It allows us to determine the generic N\'eron-Severi lattice (in \S~\ref{ss:intro}) as well as the generic transcendental lattice. And indeed, we find $T(2)$ for the latter (here the  brackets mean that we multiply the form by $2$; this gives an even form as it should):
\begin{thm*}[=Theorem~\ref{MainTHGenericCase}] For generic $X$ as above we have for the  N\'eron-Severi lattice $\NS(X) = U\perp D_6^2\perp \qf{-2}^2$ and for the transcendental lattice $T(X) = T(2)$.\footnote{See also \cite[Prop. 2.3.1]{MSY}} 
\end{thm*}
Here $D_k$ is the lattice for the corresponding Dynkin diagram. See the notation  just after the introduction.

Of course, the divisors in $X$ parametrizing special line configuration also have a moduli interpretation on the Abelian 4-fold side. This has  been studied by Hermann in \cite{Her}. To compare the results from  \cite{MSY} and \cite{Her} turned out to be a non trivial exercise (at least for us). This explains why we needed several details from  both  papers. We collected them in \S~\ref{sec:IsoCl} and \S~\ref{ssec:SAV}. We use this comparison in particular to relate (in  \S~\ref{subsec:6Lines}) Hermann's divisors $D_\Delta$ for small $\Delta$ to some of the divisors described in \cite{MSY}.  We need this in order to describe (in \S~\ref{ssec:SpecLines}) the geometry of the corresponding K3 surfaces in more detail. The technique here is the study of the degeneration of a carefully chosen elliptic pencil (when the surface moves to the special divisor) which reflects the corresponding lattice enhancements explained in \S~\ref{ssec:LattEnh}.  This technique enables us to calculate the N\'eron-Severi and transcendental lattice of the generic K3 on the divisors  $D_\Delta$ for $\Delta=1,2,4,6$ respectively (we use Hermann's notation). 

For the full statement we refer to Theorem~\ref{MainTHDegenerations}; here  we want to single out the result for $\Delta=1$:

\begin{thm*}  The divisor $D_1\subset \mathbf{M}$ corresponds to the divisor $XQ\subset \mathbf{X} $ and so (see \eqref{eqn:XQiso}) is isomorphic to a Zariski-open subset in the  moduli space of genus two curves. The generic point on $D_1$ corresponds to a K3 surface  which is a double cover of the plane branched in 6 lines tangent to a common conic. Its N\'eron-Severi lattice is $\NS(X_1)=U\perp D_4\perp D_8 \perp A_3$ and its transcendental lattice is $U(2)^2\perp\qf{-4}$. 
\end{thm*}

This requires a little further explanation beyond what is stated in Theorem~\ref{MainTHDegenerations}:  we have seen in \eqref{eqn:XQiso} that the image of $XQ$ in
$\mathbf{M}=\mathbf{X} /H$  we get a variety isomorphic to  a Zariski-open subset of $\eh_2/ \Gamma$, the moduli space of principally polarized Abelian varieties of dimension $2$ (=the moduli space of genus 2 curves).

A further novelty of this paper is the role of the Kuga-Satake correspondence.
For the general configuration of lines this has been done by one of us in \cite{Lom},
building on work of Paranjape \cite{Par}.  
One of these   earlier results reviewed  in \S~\ref{sec:KS} (Theorem~\ref{Lombardo})  states  that  the Kuga-Satake construction    gives back the original Abelian $4$--fold up to isogeny.  In the present  paper we  explain   what this construction specializes to for the K3 surface  on the generic $D_\Delta$, this time for  \emph{all} $\Delta$. See Theorem~\ref{KSSpecialCase}.
 
\section*{Notation}
The bilinear form on a lattice is usually denoted by $\qf{-,-}$. Several standard lattices as well as standard conventions are used:
\begin{itemize}
\item Orthogonal direct sums of lattices is denotes by $\perp$;
\item For a lattice $T$ the orthogonal group is denoted $\ogr{T}{}$ its subgroup of commutators  is a subgroup of the special orthogonal group   $\so{T}{}$ and is denoted   $\so{T}{+}$;
\item   Let $(T,\qf{-,-})$ be a lattice. The  dual of $T$ is defined by 
\[
T^*:= \sett{x\in T\otimes\Q}{ \qf{x,y}\in\Z,\quad \text{for all } y\in T.}
\]
Note that $T\subset T^*$. The  discriminant group  $\delta(T)$ is the finite Abelian group $T^*/T$. If $T$ is even, i.e. $\qf{x,x}\in 2\Z$, the form $\qf{-,-}$ induces a $\Q/\Z$-valued bilinear form $b_T$ on $\delta(T)$ with associated  $\Q/2\Z$-valued quadratic  form $q_T$.
\item The group  of matrices with values in a subring  $R\subset \C$ preserving the hermitian form with Gram matrix
\begin{equation}
\label{eqn:deff}
\mathbf{J}:=   \begin{pmatrix} \mathbf{0}_2 & \mathbf{1}_2 \\
-\mathbf{1}_2 & \mathbf{0}_2 \end{pmatrix}.
\end{equation}
will be denoted $\ugr{(2,2); R}{}$ because it has signature $(2,2)$;
\item For $a\in \Z$ the lattice $\qf{a}$ is the 1-dimensional lattice with basis $e$ and $\qf{e,e}=a$;
\item If $L$ is a lattice, the lattice $L(a)$ is the same $\Z$--module, but the form gets multiplied by $a$; 
\item $U$ is the standard hyperbolic lattice with basis $\set{e,f}$ and $\qf{e,e}=\qf{f,f}=0$, $\qf{e,f}=1$; 
\item  $A_k$, $D_k$, $E_k$: the standard \emph{negative definite} lattices associated to the Dynkin diagrams: if the diagrams have vertices $v_1,\dots,v_k$, we put $-2$  on the diagonal, and in the entries $ij$ and $ji$ we put $1$
 or $0$ if $v_i$ and $v_j$ are connected or not connected respectively:
 \end{itemize}
  \setlength{\unitlength}{.5mm}
\begin{picture}(90,70)(0,-20)
\multiput(73,38)(20,0){6}{\circle*{3}}
\put(73,38){\line(1,0){80}}
\multiput(153,38)(4,0){5}{\line(1,0){2}}
\put(178,38){$A_k$}

\multiput(23,8)(20,0){4}{\circle*{3}}
\multiput(23,8)(20,0){4}{\circle*{3}}
\put(23,8){\line(1,0){20}}
\put(63,8){\line(1,0){20}}
\put(83,8){\line(2,1){17}}
\put(83,8){\line(2,-1){17}}
\put(100,16){\circle*{3}}
\put(100, 0){\circle*{3}}
\multiput(43,8)(4,0){5}{\line(1,0){2}}
\put(94,8){$D_k$}
 \multiput(123,8)(20,0){6}{\circle*{3}}
\put(123,8){\line(1,0){80}}
\put(143,8){\line(0,-1){20}}
\put(143,-12){\circle*{3}}
\multiput(203,8)(4,0){5}{\line(1,0){2}}
\put(226,8){$E_k$}
 \end{picture}

Furthermore, for a projective surface $X$ we let $\NS(X)\subset H^2(X;\Z)$ and $T(X)= \NS(X)^\perp$ be the N\'eron-Severi lattice, respectively the  transcendental lattice equipped with the lattice structure from $H^2(X;\Z)$, i.e. the \ intersection product.

Finally we recall the convention to denote congruence subgroups. Suppose $R$ is a subring of $\C$, and $V_R$ a free $R$-module of finite rank and $G$ a subgroup of the group $\aut V_R$. For any principal ideal $(\omega)\subset R$ we set 
\[
G (\omega):= \sett{g\in G}{g\equiv \id \mod (\omega)}.
\] 
In what follows we restrict ourselves to
\[
R=\Z[\ii]\subset \C,\quad \omega=1+\ii,\quad G= \su{ (2,2 );\Z[\ii]}.
\]

\section{Two Classical Groups, Their Associated Domains and Lattices} \label{sec:IsoCl}
\subsection{The Groups}\label{ssec:isoms}
 We summarize some classical results from \cite[IV. \S~8]{Die}.

Let $K$ be any field and $V$ a $4$-dimensional $K$--vector space. The decomposable elements in $W:=\lwedge^2V$ correspond to the $2$-planes in $V$;
the corresponding points in  $\bP(W)$ form  the quadric $G$ which is the image of the Grassmann variety 
  of $2$-planes in $V$  under the Pl\"ucker-embedding. Concretely, choosing a basis $\set{e_1,e_2,e_3,e_4}$ and setting 
\begin{equation}
\label{eqn:qdefn}
x\wedge y = q(x,y) \,e_1\wedge e_2\wedge e_3\wedge e_4,\quad x,y\in W,
\end{equation}
the quadric $G$ has equation $q(x,x)=0$. 

On $G$ we have two types  of planes: the first  corresponds to planes contained in a fixed hyperplane of $V$, the second type  corresponds to planes passing through a fixed line. Both types of planes therefore correspond to $3$-dimensional subspaces of $W$ on which the bilinear form $q$ is isotropic. Its  index, being the maximal  dimension of $q$--isotropic subspace equals $3$. 

Any $A\in \gl V$ induces a linear map $B=\lwedge^2 A $ of $W$ preserving the quadric $G$. Conversely such a linear map $B$  is of this form, provided it preserves the two types of planes on the quadric $G$. This is precisely the case if $\det (B)>0$ and so one obtains the classical  isomorphism  between  \emph{simple} groups
\begin{eqnarray*}
\slgr {4,K}  /\text{center} & \mapright{\simeq}&  \so{6,q,K}{+}  /\text{center}, \\
\qquad  \quad  A &\mapsto &\lwedge ^2A 
\end{eqnarray*}
where we recall that the superscript $+$ stands for  the commutator subgroup.\footnote{The last subgroup can also be identified with the subgroup of elements whose spinor norm is $1$, but we won't use this characterization.}

From this isomorphism several others are deduced (loc. cit) through a process of field extensions. 
The idea is that if $K=k(\alpha)$, an imaginary   quadratic extension of a real field $k$, the form $q$ which over $K$ has maximal index $3$, over $k$ can be made to have  index $2$.  This is done as follows. One restricts to a subset of  $K$-linear transformations of $V$ which preserve a certain well-chosen anti-hermitian form $f$. The linear maps $\lwedge^2 A$ then  preserve the  \emph{hermitian} form $g=\lwedge^2 f$ given by
\begin{equation}\label{eqn:FormForg}
g(x\wedge y,z\wedge t)=  \det \begin{pmatrix} 
f(x,z) & f(y,z)\\
f(x,t) & f(y,t)
\end{pmatrix}
\end{equation}
Suppose that in some $K$--basis for $W$ the Gram matrices for $g$ and $q$ coincide and both have entries in $k$. Then the matrix of $B=\lwedge^2 A$ being at the same time $q$--orthogonal and $g$--hermitian must be real. So this yields  an isomorphism
\begin{equation*} 
\begin{array}{lcl}
  \su{V,f,K}/\text{center}   & \mapright{\simeq} &   \so{6,q,k} {+}/\text{center} \\
\qquad  \quad  A &\mapsto &\lwedge ^2A 
\end{array}
\end{equation*}
 In our situation $K=k(\ii)$ (with, as before, $k$ a real field). We choose our basis $\set{a_1,a_2,b_1,b_2}$, for $V$ in such a way that the anti-hermitian form $f$ has Gram matrix 
$\ii \mathbf{J}$ (see \eqref{eqn:deff}).

The  Gram matrix of  $-q$ is the (integral) Gram matrix  for  $U\perp U\perp  U$. The Gram matrix of   the hermitian form $-g$ is  found to be the  Gram matrix  for  $U\perp U\perp \qf{ -1}\perp \qf{ -1}$  of signature $(2,4)$.   In a  different $K$-basis for $W$   the Gram matrix of $q$ is found to coincide  with the Gram  matrix for $g$:
 \begin{lemma}  Let $\omega= \half (-1+\ii)$ and set 
\[
\begin{matrix}
{g}_ 1=f_1, \quad    {g}_ 2=f_2,   \quad   {g}_ 3=f_3,  \quad {g}_ 4=f_4,  &  g_ 5=\omega f_5-  \bar \omega   f_6, &   g_ 6=    ( -\bar\omega  f_5+\omega   f_6)\\
\hspace{-5em} T   : =    \Z \text{--lattice  spanned by }  &\hspace{-13em} \set{g_1,\dots,g_6}.
\end{matrix}
\]
Then the Gram matrices of $-q$ (see \eqref{eqn:qdefn}) and $-g$ (see \eqref{eqn:FormForg}) on  $T $  are both equal to $U\perp U \perp \qf{  -1} \perp  \qf{  -1} $. 
 \end{lemma}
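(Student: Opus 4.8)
\emph{Plan.} I would take the two normal forms already recorded in the $K$-basis $\set{f_1,\dots,f_6}$ of $W$ as the input data and reduce the whole statement to a single $2\times2$ block. By the preceding discussion, in this basis $-q$ (from \eqref{eqn:qdefn}) has Gram matrix $U\perp U\perp U$ while $-g$ (from \eqref{eqn:FormForg}) has Gram matrix $U\perp U\perp\qf{-1}\perp\qf{-1}$. Both are \emph{orthogonal} direct sums, so $\set{f_1,\dots,f_4}$ is perpendicular to $\set{f_5,f_6}$; moreover the two forms agree on the first four vectors, each restricting there to $U\perp U$, and differ only on $\set{f_5,f_6}$, where $-q$ is a hyperbolic plane and $-g$ is $\qf{-1}\perp\qf{-1}$. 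This is exactly why only $g_5,g_6$ need to be altered: since $g_i=f_i$ for $i\le4$ and $g_5,g_6$ lie in the span of $f_5,f_6$, the top-left $4\times4$ block ($=U\perp U$) and the off-diagonal blocks ($=0$) are literally unchanged for both $-q$ and $-g$. Everything thus reduces to the two $2\times2$ Gram matrices on $\set{g_5,g_6}$.

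On $\set{f_5,f_6}$ the data are $q(f_5,f_5)=q(f_6,f_6)=0$, $q(f_5,f_6)=-1$ for the hyperbolic plane, and $g(f_5,f_5)=g(f_6,f_6)=1$, $g(f_5,f_6)=0$ for $\qf{-1}\perp\qf{-1}$. The change of basis is the symmetric matrix
\[
M=\begin{pmatrix}\omega & -\bar\omega\\ -\bar\omega & \omega\end{pmatrix}.
\]
The key conceptual point, and really the only content of the lemma, is that $q$ and $g$ are transformed by two \emph{different} congruences: the bilinear $q$ by $P\mapsto {}^{t}M\,P\,M$, the hermitian $g$ by $P\mapsto M^{*}P\,M$. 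Since $M$ is symmetric these read $M\,P\,M$ and $\bar M\,P\,M$ respectively, so they differ only in whether the left factor is conjugated, and one must verify that for this particular $\omega$ they nonetheless land on the same integral block.

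This is where the two scalar identities $\omega\bar\omega=\half$ and $\omega^2+\bar\omega^2=0$ — both immediate for $\omega=\half(-1+\ii)$ — do all the work. For $q$ one finds $q(g_5,g_5)=-2\,\omega\bar\omega\,q(f_5,f_6)=2\omega\bar\omega=1$ and $q(g_5,g_6)=(\omega^2+\bar\omega^2)\,q(f_5,f_6)=0$; for $g$ one finds $g(g_5,g_5)=\omega\bar\omega\,g(f_5,f_5)+\omega\bar\omega\,g(f_6,f_6)=2\omega\bar\omega=1$ and $g(g_5,g_6)=-(\omega^2+\bar\omega^2)=0$. Hence $-q$ and $-g$ both have block $\begin{pmatrix}-1 & 0\\ 0 & -1\end{pmatrix}=\qf{-1}\perp\qf{-1}$ on $\set{g_5,g_6}$, and together with the unchanged $U\perp U$ this gives the common Gram matrix $U\perp U\perp\qf{-1}\perp\qf{-1}$ for both forms, as asserted.

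The computation is elementary, so the obstacle is conceptual bookkeeping rather than difficulty. The single essential ingredient is the arithmetic coincidence $\omega^2=-\bar\omega^2$ together with $2\omega\bar\omega=1$: it is precisely this that forces the bilinear and the sesquilinear transformation of the $\set{f_5,f_6}$-block to agree and, just as importantly, to produce integral entries $0,\pm1$ so that $T$ genuinely is a lattice carrying the stated form. I would be most careful about two sign conventions: which slot of the hermitian $g$ is taken antilinear (the opposite choice replaces $M^{*}P\,M$ by ${}^{t}M\,P\,\bar M=M\,\bar M$, which equals $\mathbf{1}_2$ just as $\bar M\,M$ does, so the conclusion is insensitive to it), and reading the entries $q(f_5,f_6)=-1$ and $g(f_5,f_5)=1$ off the two normal forms with the correct signs.
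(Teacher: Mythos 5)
Your proof is correct and is essentially the paper's own argument: the paper states this lemma without a separate proof, treating it as the direct verification---starting from the stated Gram matrices $U\perp U\perp U$ for $-q$ and $U\perp U\perp\langle -1\rangle\perp\langle -1\rangle$ for $-g$ in the $f$-basis---that the substitution on the $\{f_5,f_6\}$-block yields $\langle -1\rangle\perp\langle -1\rangle$ for both forms simultaneously. Your reduction to that $2\times 2$ block and the two identities $\omega\bar\omega=\frac12$ and $\omega^2+\bar\omega^2=0$ constitute exactly this computation, so nothing is missing.
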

Indeed, in this basis    we obtain the desired isomorphisms:
\begin{lemma}[\protect{\cite[\S~1.4]{Mats}}] \label{ClassicalIsom} Set  $T_\Q:=T\otimes \Q$. We have an isomorphism of $\Q$--algebraic groups (see \eqref{eqn:deff})
\begin{equation*} 
\begin{array}{lcl}
  \su{(2,2) ;\Q(\ii)}/\set{\mathbf{1},-\mathbf{1} }  & \mapright{\simeq} &   \so{T_\Q;\Q} {+}  \\
\qquad  \quad  A &\mapsto &\lwedge ^2A|T_ \Q .
\end{array}
\end{equation*}
On  the level of integral points we have
\begin{equation}\label{eqn:FundIso}
\varphi: \su{(2,2) ;\Z[\ii]}/ \set{\mathbf{1},-\mathbf{1} } \mapright{\sim}\so{T}{+}.
\end{equation}
This   isomorphism   induces    an isomorphism   of real Lie groups
\[
\su{(2,2);\C/}\set{\mathbf{1},-\mathbf{1} }  \mapright{\simeq} \so{(2,4);\R}{+}.
\]
The target  is the component of the identity of $\so{(2,4);\R}{}$ and is a simple group.
\end{lemma}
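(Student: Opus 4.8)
The plan is to specialize the general construction recalled above (following Dieudonn\'e) to $k=\Q$, $K=\Q(\ii)$, working throughout in the basis $g_1,\dots,g_6$ of the previous Lemma, and to establish the three assertions in turn: the isomorphism of $\Q$--algebraic groups, its restriction to integral points, and the resulting statement over $\R$.

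For the $\Q$--algebraic isomorphism I would argue as follows. Given $A\in\su{(2,2);\Q(\ii)}$, the endomorphism $B:=\lwedge^2 A$ of $W$ preserves $q$: indeed $\det A=1$ makes $\lwedge^4 A$ the identity, so $q(Bx,By)=\det(A)\,q(x,y)=q(x,y)$ in~\eqref{eqn:qdefn}. It also preserves the hermitian form $g$, by the determinant identity~\eqref{eqn:FormForg} combined with $f(Ax,Ay)=f(x,y)$. By the previous Lemma the Gram matrices of $q$ and $g$ agree on $T$ and are rational, so $B$, being simultaneously $q$--orthogonal and $g$--hermitian for one and the same real Gram matrix, must have rational entries in the basis $g_1,\dots,g_6$; hence $\lwedge^2 A|T_\Q\in\so{T_\Q;\Q}{}$. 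Since $A$ lies in the connected group $\slgr{4,\Q(\ii)}$, it preserves the two types of planes on the quadric $G$, and therefore $B\in\so{T_\Q;\Q}{+}$. The map is injective modulo $\set{\mathbf{1},-\mathbf{1}}$: the relations $\mu_i\mu_j=1$ among the eigenvalues of $A$ forced by $\lwedge^2 A=\mathbf{1}$ give $A=\pm\mathbf{1}$. Surjectivity is the converse half of the classical construction: an element of $\so{T_\Q;\Q}{+}$ respects both rulings of $G$, hence equals $\lwedge^2 A$ for some $A\in\slgr{4,\Q(\ii)}$, and the requirement that $B$ be $g$--hermitian then forces $A\in\su{(2,2);\Q(\ii)}$.

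For the integral statement~\eqref{eqn:FundIso} I would check that $\varphi$ carries the two integral structures into one another. The inclusion $\varphi(\su{(2,2);\Z[\ii]})\subseteq\so{T}{+}$ is a direct computation: one verifies, using the explicit expressions for the $g_j$, that $\lwedge^2 A$ preserves the lattice $T$ whenever $A$ has entries in $\Z[\ii]$. The reverse inclusion --- that every element of $\so{T}{+}$ is $\lwedge^2$ of a matrix with Gaussian--integer entries --- is the delicate point, and I expect it to be the main obstacle: the $\Q$--isomorphism controls denominators only over $\Q(\ii)$, so one has to exclude non--integral preimages. Here I would exploit that the preimage $A$ is determined up to sign and that $\lwedge^2 A$ must preserve the $\Z[\ii]$--module structure on $T$, which pins $A$ down to $\su{(2,2);\Z[\ii]}$, as carried out in \cite[\S 1.4]{Mats}.

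Finally, for the real Lie group assertion I would pass to real points, using $\Q(\ii)\otimes_\Q\R=\C$. The differential of $\lwedge^2$ realizes the real form $\es\eu(2,2)\simeq\es\eo(2,4)$ of the isomorphism invoked in the introduction, so the induced Lie group homomorphism $\su{(2,2);\C}\to\so{(2,4);\R}{}$ is a local isomorphism with kernel $\set{\mathbf{1},-\mathbf{1}}$. As $\su{(2,2);\C}$ is connected, its image is a connected subgroup of full dimension, hence open, and therefore equals the identity component $\so{(2,4);\R}{+}$. Simplicity of the latter then follows from the simplicity of $\es\eu(2,2)$.
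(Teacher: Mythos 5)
Your proposal is correct, and its core is the same specialization of Dieudonn\'e's construction that the paper carries out in the explicit $g$--basis: both arguments rest on the observation that a matrix which is simultaneously $q$--orthogonal and $g$--hermitian with respect to one and the same rational Gram matrix has real, hence rational, entries. The organization, however, is inverted, and this is where the two proofs genuinely differ. The paper proves the \emph{real} assertion first — the image of $\su{(2,2);\C}$ is a connected subgroup of full dimension $15$ inside $\so{(2,4);\R}{}$, hence equals the identity component — and then reads off the $\Q$--algebraic isomorphism from the rationality of the coefficients, the classical facts recalled in \S~\ref{ssec:isoms} supplying what remains; your route instead establishes surjectivity over $\Q$ via the rulings of the quadric and then handles $\R$ separately by a Lie-algebra/openness argument, which is really a rephrasing of the paper's dimension count. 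Where the paper is sharper than your sketch is the integral statement: its entire proof is the single remark that the change of basis from the $f$--basis to the $g$--basis is \emph{unimodular} over $\Z[\ii]$, so that for a rational matrix preserving $T$ is the same as preserving $T\otimes_{\Z}\Z[\ii]=\lwedge^2\bigl(\Z[\ii]^4\bigr)$. This one observation makes your ``direct computation'' for the forward inclusion unnecessary, and for the reverse inclusion — which you rightly identify as the delicate point but defer wholesale to \cite{Mats} — it reduces the problem to the concrete statement that if $\lwedge^2 A$ preserves $\lwedge^2\bigl(\Z[\ii]^4\bigr)$ then $A$ preserves $\Z[\ii]^4$, which follows from an elementary-divisor argument over the PID $\Z[\ii]$. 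So: same skeleton, but the paper buys economy by deducing everything from the real case, and its unimodularity device is the ingredient you were missing at the integral level.
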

\proof
 As noted before, a matrix which is at the same time hermitian and orthogonal with respect to the same real matrix has to have real coefficients.  
 So the map $A\mapsto \lwedge^2 A$ sends $\su{(2,2) ;\C}$ injectively to a connected real subgroup of $\so{(6,q)}{}$. A dimension count shows that we get the entire connected component of the latter group which is (isomorphic to) $\so{(2,4);\R}{+}$.
 
Assume now that $A$ has coefficients in $\Q(\ii)$. It then also  follows that $\lwedge ^2A|T_ \Q$  must have rational coefficients, i.e.  we have shown the first  assertion of the lemma. The assertion about integral points follows since the change of basis matrix from the $f$-basis  to the $g$-basis is unimodular and hence if $A$ preserves a lattice, $\lwedge^2 A|T $ preserves the corresponding lattice.
 \qed\endproof
 
 \begin{rmk} \label{ExtIso} This isomorphism can be extended to $\ugr{(2,2) }{}$ modulo its center $\ugr{1}{}$ provided one  takes  the semi-direct product of  the latter group with an involution $\tau $ which acts on matrices $A\in \ugr{(2,2) }{}$ by $\tau(A)=
 \bar A$.     We set
 \begin{equation}\label{eqn:ExtU22}
 \ugr{(2,2) } {*}:= \ugr{(2,2) }{}\rtimes    \langle \tau \rangle.
\end{equation} 
Now  $\tau$ also induces complex conjugation on $\lwedge^2 V$ with respect to the real structure given by the real basis $\set{f_1,\dots, f_6}$. This involution preserves $\set{g_1,\dots,g_4}$ but interchanges $g_5$ and $g_6$. So on $T$ the involution becomes identified with the involution\begin{equation}\label{eqn:TauTilde}
\tilde\tau :T \to T,\quad \tilde\tau(g_k)= g_k, \, k=1,\dots,4, \,\,\tilde\tau(g_5)= g_6 
\end{equation}
 and  one   can extend the homomorphism $\varphi$ from  \eqref{eqn:FundIso} by sending $\tau$ to $\tilde \tau$. 

Accordingly, we 
 define a two component subgroup of $\ogr{2,4}{}$:
\[
\so{2,4}{*}= \so{2,4}{+} \rtimes   \qf{  \tilde \tau }.
\]
Note that $-\mathbf{1} \in \so{2,4}{*}$ so that 
\[
\begin{array}{ll}
\big( \ugr{(2,2) } {}/  \ugr{1} {} \big) \rtimes    \langle \tau \rangle   \,  \simeq  \, \su{(2,2) } / \set{\pm \mathbf{1}, \pm \ii \mathbf{1 } }  \rtimes    \langle \tau \rangle  \mapright{\simeq}  \so{2,4}{*}/\set{\pm \mathbf{1}}\\
\su{(2,2) } / \set{\pm \mathbf{1}, \pm \ii \mathbf{1 } } \,  \mapright{\simeq}   \so{2,4}{+} /\set{\pm\mathbf{1}}.
\end{array}
\]
 \end{rmk}
 \begin{rmk}\label{AltDescr}
 The (symplectic) basis $\set{a_1,a_2,b_1,b_2}$ of $V$ can be used to define   a  linear isomorphism
\begin{equation}\label{eqn:DET} 
\det :\mathsf{\Lambda}^4 V_\C \mapright{\sim} \C,\qquad a_1\wedge a_2\wedge b_1\wedge b_2 \mapsto 1.
\end{equation}
We use this  to obtain a   $\C$-antilinear involution   $t$ of $\mathsf{\Lambda}_\C  ^2 V$ as follows:
\[
t:\mathsf{\Lambda}^2  V_\C \mapright{\sim}\mathsf{\Lambda}^2  V_\C  \;\; \text{ such that } \;\;
 g(u,v)= -\det (t(v)\wedge u).
\]
By    definition of $t$ one has $t(f_i)= f_i$, $i=1,\dots,4$ and $t(f_5)=  f_6 $, $t(f_6)=   f_5 $ and since $t$ is $\C$ anti-linear 
we see that $t$ preserves not only the first 4 basis vectors $g_i$  of $T$ but also the last two $g_5,g_6$. So the  $12$-dimensional real vector space $\mathsf{\Lambda}^2  V_\C$ splits into  two real $6$-dimensional $t$-eigenspaces, namely  $T_\R=T\otimes\R$ for eigenvalue $1$ and $ \ii T_\R$ (for eigenvalue $ -1$) respectively:
\begin{equation}\label{eqn:splitwedge}
\mathsf{\Lambda}^2 V_\C= T_\R \perp \ii T_\R.
\end{equation}
 \end{rmk}
 
 \subsection{Congruence Subgroups}

The quotient $\su{(2,2) ;R}/\su{(2,2) ;R}(\omega)$ acts naturally on $(R/(\omega R))^4=\bF_2^4$. Since the  hermitian form $\ii f$ in the basis $\set{a_1,a_2,b_1,b_2}$  descends to this $\bF_2$--vector space to give a symplectic form, we then get an isomorphism
\[
\su{(2,2) ;R}/\su{(2,2) ;R}(\omega)\mapright{\sim} \smpl{4;\bF_2}.
\]
By \eqref{eqn:splitwedge} we have  $T  \perp \ii T =\lwedge ^2  V_R$.  Lemma~\ref{ClassicalIsom}, states that the group  $\su{(2,2) ;R}$ acts on $T $ and so the subgroup $\su{(2,2) ;R}(\omega)$ acts on  $(1+\germ i)^2\lwedge^2  V_R= 2\lwedge^2  V_R$. It preserves the  sublattice $2T \subset T $. 
It follows that under the isomorphism of Lemma~\ref{ClassicalIsom} one gets an identification
\begin{equation} \label{eqn:CongruIdent}
\hspace{-1 em} \smpl{4;\bF_2}\mapleft{\sim}\!\! \su{(2,2) ;R}/\su{(2,2) ;R} (\omega)\mapright{\sim} \!\!\so{T }{+} /\so{T }{+} (2).
\end{equation}
Since the involution $\tilde\tau\in  \ogr{2,4}{+}$ (see \eqref{eqn:TauTilde}) obviously does not belong to the congruence $2$ subgroup, we may define extensions as follows:
\[
\so{T }{*}(2) := \so{T}{+}(2) \ltimes \langle \tilde\tau\rangle,\quad
\ugr{(2,2) ;R}{*}(\omega) := \ugr{(2,2) ;R} {}(\omega) \ltimes \langle \tau\rangle.
\]
In particular we have
\begin{equation} \label{eqn:CongruIdent2}
\hspace{-1 em} \smpl{4;\bF_2} \mapleft{\sim}\!\! \ugr{(2,2) ;R}{*}/\ugr{(2,2) ;R}{*}(\omega) \mapright{\sim} \!\!\so{T }{*} /\so{T }{*}(2).
\end{equation}

\begin{rmks} \label{roots} 1. See  also  \cite[\S~1.5]{Mats}, where the result is shown by brute force. 
To compare, we need a dictionary. The group  $\Gamma\cA$ from loc. cit. is  our   $\so{T}{*}/\pm 1$.  The group $\Gamma\cA(2)$   is the congruence 2 subgroup which is equal to   $\ogr{T }{+} (2)/\pm 1$. It  lacks the involution $\tilde\tau$ and hence has  index 2  in the extended group $\ogr{T }{+} (2)\ltimes \langle \tilde\tau\rangle$ modulo its center. This explains why $\Gamma\cA/\Gamma\cA(2)\simeq \smpl{4;\bF_2}\times \Z/2\Z$.
\\
2. As in  \cite[\S3]{kondo} it can be shown that  the restriction homomorphism $\ogr{T(2)}{}\to \ogr{q_{T(2)}}{}$ is surjective with kernel the congruence subgroup $\ogr{T(2)}{}(2)$. The orbits of  $\ogr{q_{T(2)}}{}$ acting on  $T(2)^*/T(2)\simeq \bF_2^6$ have been described explicitly in loc. cit., using coordinates induced by the standard basis for $U(2)\perp U(2) \perp A_1\perp A_1$. The form $q_{T(2)}$ is $\Z/2\Z$-valued on the sublattice $F=\sett{a=(a_1,\dots,a_6)\in \bF_2^6}{a_5+a_6=0}$ and $b_{T(2)}$ restricts to zero on $F^0=\set{0,\kappa=(1,1,1,1,0,0)}$. Hence $b_{T(2)}$ induces a symplectic form on $F/F^0\simeq \bF_2^4$ (this explains anew that $\Gamma\cA/\Gamma\cA(2)\simeq \smpl{4;\bF_2}\times \Z/2\Z$). The orbits are now as follows:
\begin{enumerate}
\item two orbits of length $1$: $0$ and $\kappa$;
\item $\set{ a\not= 0, q_{T(2)}=0}$, the orbit (of length $15$) of $(1,0,0,0,0,0)$;
\item   $\set{ a\not= \kappa ,  q_{T(2)}=1}$, the orbit (of length $15$) of $(1,1,0,0,0,0)$;
\item $\set{ a,    q_{T(2)}=\half }$, the orbit (of length $12$) of $(1,1,0,0,1,0)$;  it splits into two equal orbits  under $\so{T(2)}{*}/\so{T(2)}{*}(2) $ (the involution exchanging the last two  coordinates  act  as the identity in this quotient);
\item $\set{ a,   q_{T(2)}=-\half }$, the orbit (of length $20$) of $(0,0,0,0,1,0)$; it splits also into two equal orbits  under $\so{T(2)}{*}/\so{T(2)}{*}(2) $;
\end{enumerate}

 \end{rmks}

 \subsection{The Corresponding Symmetric Domains}\label{ssec:Domains}
Recall \cite[\S~1.1]{Mats} , that the symmetric domain  associated to the group $\ugr{n,n}{}$, $n\ge 1$ is the $n^2$--dimensional domain\footnote{Recall: for a matrix $W$, we abbreviate $W^*=\Tr \bar W$.}
\[
\mathbf{H}_n:= \left\{ W\in M_{n\times n}(\C) \mid  \frac{1}{2\ii}(W-W^*)>0\right\} \simeq \ugr{n,n}{}/\ugr{n}{}\times\ugr{n}{}.
\]
Indeed, writing
\[
\gamma=\begin{pmatrix}
A & B\\
C & D
\end{pmatrix}\in \ugr{n,n}{}, \quad A,B,C,D \in M_{n\times n}(\C)
\]
the action is given by $\gamma(W)= (AW+B)(CW+D)^{-1}$. The full automorphism group of $\mathbf{H}_n$ is  the semi-direct product $[ \ugr{n,n}{}/\ugr{1}{}] \rtimes \langle \tau \rangle $ where 
$\tau$ is the involution given by $\tau(W)=W^{-1}$.
Since  $ \tau\comp \gamma\comp \tau= \overline{\gamma} $ this indeed corresponds to complex conjugation on $\su{n,n}$ (see Remark~\ref{ExtIso}).

The symmetric domain associated to a  bilinear  form $b$ of signature $(2,n)$, $n\ge 2$ is the $n$-dimensional connected \textbf{bounded domain of type IV} 
\[
\mathbf{D}_{n}:= \sett{z=[(z_1:\dots:z_{n+2})]\in \bP^{n+1}}{ \Tr z  b z=0,\, z ^*  b z >0, \im(z_3/z_1)>0}.
\]
Without the second defining inequality the resulting domain is no longer connected; the subgroup $\ogr{2,n}{+}$  preserves each connected component. 

This domain  parametrizes polarized weight $2$  Hodge structures $(T,b)$ with Hodge numbers $(1,n,1)$.  This can be seen as follows. The subspace $T^{2,0}\subset T\otimes\C$ is a line in $T\otimes\C$, i.e. a point  $z\in\PP(T)$.  The polarizing form $b$ is a form of signature $(2,n)$ and the two Riemann conditions translate into $\Tr z b z=0$ and $z^* b z>0$. These two relations determine an open subset $D(T) \subset \PP(T)$ and the moduli space of  such  Hodge structures is thus $D(T) /\ogr{T}{}$. Now  $D(T)$ has two components, one of  which is (isomorphic to)  $\mathbf{D}_n$;
both   $ \so{T}{+}$ and the involution $\tilde \tau$ preserve  the components, $\so{T}{+}$ has index $4$  in $\ogr{T}{}$ and hence 
\[
\ogr{T}{}= \underbrace{\so{T}{+ }\cup \tilde\tau \so{T}{+ } }_{\so{T}{*}}\cup \underbrace{\sigma \so{T}{+ }\cup\sigma\tilde\tau  \so{T}{+ }}_{\sigma\so{T}{* } }
\]
 where  $\sigma$ permutes the two components of $D(T)$: our  moduli space  can be written as the orbit  space  
 \[
 \mathbf{D}_n/\so{T}{*}.
 \]
\begin{rmk} \label{ExchCompo} Let us specify this to the case which interests us most, $n=4$. Then, $\mathbf{D}_4= \so{2,4}{+}  /\left ((\ogr{2}{}\times \ogr{4}{} )\cap \so{6}{}  \right) $ and its automorphism group is $\ogr{2,4}{+}/\langle -\mathbf 1\rangle $ 
(acting as group of projectivities on the projective space $\bP^5$ preserving the quadric in which $\mathbf{D}_4$ is naturally sitting). 

The element $c:=\text{diag}(1,1,-1,-1,1,1)\in \so{2,4}{}$  preserves the lattice $T$ and exchanges the two components; every element of  $ \so{2,4}{}$ can be written as $cg=g' c$ with $g, g' \in\so{2,4}{+}$. The element $a:=\text{diag}(1,1,1,1,U)$, $U=\begin{pmatrix} 0&1\\1&0 \end{pmatrix}$ has determinant $-1$ and every element in $\ogr{2,4}{}$ can be written as a product $acg=cag$ with $g\in \so{2,4}{+}$.
\end{rmk}

\begin{prop}[\protect{ \cite[\S1.1]{Mats}}] There is a classical isomorphism between the two domains $\mathbf{H}_2\mapright{\sim} \mathbf{D}_4$ which is equivariant with respect to the isomorphism
$$
[\ugr{(2,2) }{} /\ugr{1}{} ]  \rtimes    \langle\tau \rangle   \mapright{\simeq}   \so{2,4}{*}/\set{\pm \mathbf{1}}$$ 
of Remark~\ref{ExtIso}.  
\end{prop}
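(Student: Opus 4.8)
The plan is to realize the isomorphism concretely through the second exterior power, using the identification $\lwedge^2 V_\C = T_\C$ furnished by \eqref{eqn:splitwedge} and the Hodge-theoretic description of $\mathbf{D}_4$ recalled in \S\ref{ssec:Domains}. A point $W\in\mathbf{H}_2$ determines the maximal $f$-positive plane $V^{1,0}_W\subset V_\C$ — in the basis $\set{a_1,a_2,b_1,b_2}$ the column span of $\binom{W}{\mathbf{1}_2}$ — on which $f$ restricts, up to normalization, to the positive hermitian form $\frac{1}{2\ii}(W-W^*)>0$; this is the $(1,0)$–part of the weight one Hodge structure of Weil type attached to $W$. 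I would then define
\[
\Phi:\mathbf{H}_2\longrightarrow \PP(\lwedge^2 V_\C)=\PP(T_\C),\qquad \Phi(W)=\bigl[\lwedge^2 V^{1,0}_W\bigr],
\]
sending $W$ to the Pl\"ucker point of the plane $V^{1,0}_W$, equivalently to the line $(\lwedge^2 V)^{2,0}$ of the weight two Hodge structure induced on $\lwedge^2 V_\C\cong T_\C$.

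First I would check that $\Phi$ lands in $\mathbf{D}_4$. The first Riemann relation $q(\Phi(W),\Phi(W))=0$ is automatic, since $\Phi(W)$ is a decomposable bivector and hence a point of the Grassmann quadric $G=\set{q=0}$ of \eqref{eqn:qdefn}. The positivity $z^*bz>0$ is the substantive point. Using the determinantal formula \eqref{eqn:FormForg}, $g$ evaluated on the line $\lwedge^2 V^{1,0}_W$ is the determinant of the restriction of $f$ to $V^{1,0}_W$, a $2\times 2$ hermitian quantity governed by $\frac{1}{2\ii}(W-W^*)$; recalling that the form on $T$ is $-g=-q$ (the unnumbered Lemma above), I would then match the defining inequality of $\mathbf{H}_2$ to the second Riemann relation, tracking carefully the signs and the $f$-basis versus $g$-basis normalization. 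Finally I would pin down the connected component $\im(z_3/z_1)>0$ by evaluating $\Phi$ at one base point (say $W=\ii\mathbf{1}_2$) and invoking connectedness of $\mathbf{H}_2$.

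Equivariance is then formal. For $A\in\ugr{(2,2)}{}$ acting on $\mathbf{H}_2$ by $W\mapsto (AW+B)(CW+D)^{-1}$ one has $A\cdot V^{1,0}_W=V^{1,0}_{A\cdot W}$, so naturality of $\lwedge^2$ gives $\Phi(A\cdot W)=(\lwedge^2 A)\,\Phi(W)$; this is precisely equivariance under the isomorphism $A\mapsto\lwedge^2 A$ of Lemma~\ref{ClassicalIsom}, i.e. $\su{(2,2);\C}/\set{\pm\mathbf{1}}\to\so{(2,4);\R}{+}$. To cover the involution I would invoke Remark~\ref{ExtIso}: $\tau$ acts on $\mathbf{H}_2$ by $W\mapsto W^{-1}$ and induces on $\lwedge^2 V$ the complex conjugation whose restriction to $T$ is $\tilde\tau$; checking $\Phi(W^{-1})=\tilde\tau\,\Phi(W)$ extends the equivariance to the semi-direct products $[\ugr{(2,2)}{}/\ugr{1}{}]\rtimes\langle\tau\rangle$ and $\so{2,4}{*}/\set{\pm\mathbf{1}}$, as required.

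With equivariance established, bijectivity and biholomorphy follow from homogeneity: $\Phi$ is manifestly holomorphic and intertwines the transitive action of $\ugr{(2,2)}{}$ on $\mathbf{H}_2$ with the transitive action of $\so{2,4}{+}$ on $\mathbf{D}_4$, so it descends to a map of homogeneous spaces $G/K_1\to G/K_2$, where $G$ is the common real group and the isotropy subgroups $K_1=\ugr{2}{}\times\ugr{2}{}$ and $K_2=(\ogr{2}{}\times\ogr{4}{})\cap\so{6}{}$ (modulo centers) correspond under the group isomorphism; a bijection results. I expect the main obstacle to be the positivity bookkeeping of the second paragraph — correctly matching the inequality defining $\mathbf{H}_2$ to the sign conditions $z^*bz>0$ and $\im(z_3/z_1)>0$ that cut out the \emph{connected} domain $\mathbf{D}_4$ — since the remaining ingredients are either automatic (the Pl\"ucker relation) or purely formal (the functoriality of $\lwedge^2$).
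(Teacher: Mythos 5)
Your proposal is correct and is essentially the paper's own argument: the paper's (three-sentence) sketch of proof uses exactly this map, sending $W$ to the $2$-plane spanned by the rows of $(W\,|\,\mathbf{1}_2)$ and then to its Pl\"ucker point on the quadric, identifying the image open subset with $\mathbf{D}_4$ (with the positivity characterization $(\ii\cdot f)|P>0$ recorded in Remark~\ref{CharactH2}). You simply flesh out the verifications the paper leaves implicit — quadric membership, the positivity/component bookkeeping, and the equivariance via functoriality of $\lwedge^2$ and Remark~\ref{ExtIso} — so the two arguments coincide in substance.
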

\noindent
\textit{Sketch of proof:} We describe the isomorphism briefly as follows.  To the matrix $W\in\mathbf{H}_2$ one associates the $2$--plane  in $ \C^4$ spanned by the rows of the matrix $(W | \mathbf{1}_2)$. This sends $\mathbf{H}_2$ isomorphically to an open subset in the Grasmannian of $2$--planes in $\C^4$ which by the Pl\"ucker embedding gets identified   with an open subset of the Pl\"ucker quadric in $\bP^5$. This open subset is the type IV domain $\mathbf{D}_4$.  \qed
\endproof
\begin{rmk}\label{CharactH2}
Recall we started  \S~\ref{ssec:isoms} with a  pair $(V,f)$, with $V=\Q(\ii)^4$ and $f$ a non-degenerate skew-hermitian form on $V$.
The above assignment $W\mapsto $  {the plane $P=P_W\subset V$ spanned by the rows of the matrix $(W | \mathbf{1}_2)$}  is  such that the hermitian form $\ii f| P$ is positive.  
In other words, the domain  $\mathbf{H}_2$ parametrizes the   complex 2--planes $P$  in $V_\C=V\otimes\C=\C^4$ such  $(\ii\cdot f)| P>0$. 
\end{rmk}

\subsection{Orbits in The Associated  Lattice.}\label{ssec:Orbits}

Recall that $T:=U^2\oplus\qf {-1}^2$ is the lattice on which the group $\ogr{T}{*}$ acts by isometries.  To study the orbits of vectors in $T$ we use  the results from \cite{wall}. We summarize these for this example. Recall that a primitive vector $x$ in a lattice is called \textbf{characteristic} if $\qf{x,y}\equiv \qf{y,y} \bmod 2$ for all vectors $y$ in the lattice. Other vectors are called \textbf{ordinary}. In an even lattice all primitive vectors are characteristic.  In the standard basis $ \set{e_1,e_2,e_3,e_4,e_5,e_6}$ for $T$ we let $\mathbf{x}=(x_1,x_2,x_3,x_4,x_5,x_6)$ be the coordinates. Then $\mathbf{x}$ is characteristic if and only if  $x_1,\dots,x_4$ are even and $x_5,x_6$ are odd.  The \textbf{type} of a primitive lattice vector
is said to be $0$ for ordinary vectors and $1$ for characteristic vectors. Wall's result  formulated for $T$ states that two primitive vectors of the same norm squared and of the same type are in the same $\ogr{T}{}$--orbit. So we have:
 
\begin{prop} $T=U^2\oplus\qf {-1}^2$ is a unimodular odd indefinite lattice of signature $(2,4)$, isometric to $\qf {1}^2\perp \qf{-1}^4$.  Let $\mathbf{x}\in T$ be primitive with $\qf{\mathbf{x},\mathbf{x}}=-(2k+1)$, respectively $-2k$, $k>0$. 
\begin{itemize}
\item In the first case $\mathbf{x}$ is always non-characteristic and $\ogr{T}{}$--equivalent to $(1,-k,0,0,1,0)$. 
\item In the second case, a   vector  $\mathbf{x}$ is $\ogr{T}{}$--equivalent to $(2,\half (-k+1),0,0,1,1)$ if  characteristic (and then $k\equiv 1 \bmod 4$)  and to
 $(1,-k,0,0,0,0)$ if not.
\end{itemize}
\end{prop}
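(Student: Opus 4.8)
The plan is to reduce everything to the result of Wall quoted just above the proposition, so that it only remains to exhibit, for each admissible norm and type, one \emph{primitive} representative and to show that the norm constrains the type in the asserted way. First I would record that in the standard basis the form reads $\qf{\mathbf{x},\mathbf{x}} = 2(x_1x_2 + x_3x_4) - x_5^2 - x_6^2$, since $e_1,e_2$ and $e_3,e_4$ span the two hyperbolic planes and $e_5,e_6$ the two copies of $\qf{-1}$. Because $T$ contains $e_5$, of odd norm $-1$, it is an odd indefinite unimodular lattice of signature $(2,4)$; by the classification of indefinite unimodular lattices it is therefore isometric to $\qf{1}^2\perp\qf{-1}^4$, which is the first assertion.

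Next I would analyse parity. A primitive vector is characteristic exactly when $x_1,\dots,x_4$ are even and $x_5,x_6$ are odd, as recalled before the statement. For such a vector $2(x_1x_2+x_3x_4)$ is divisible by $8$, while $x_5^2+x_6^2\equiv 2\bmod 8$ because odd squares are $\equiv 1\bmod 8$; hence $\qf{\mathbf{x},\mathbf{x}}\equiv -2\equiv 6\bmod 8$. Thus every characteristic vector has even norm $-2k$ with $2k\equiv 2\bmod 8$, that is $k\equiv 1\bmod 4$; in particular a vector of odd norm can never be characteristic. This simultaneously establishes the parity restriction $k\equiv 1\bmod 4$ in the characteristic case and the claim that the odd-norm case is purely ordinary.

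It then remains to verify the three candidate representatives against this. Using the displayed formula, $(1,-k,0,0,1,0)$ has norm $-2k-1=-(2k+1)$, is primitive (its first coordinate is $1$) and is ordinary (its first coordinate is odd). Likewise $(1,-k,0,0,0,0)$ has norm $-2k$, is primitive and ordinary, while $(2,\tfrac12(1-k),0,0,1,1)$ — whose entries are integral precisely when $k\equiv 1\bmod 4$, in which case $\tfrac12(1-k)$ is even — has norm $2(1-k)-2=-2k$, is primitive (its fifth coordinate is $1$) and is characteristic (its first four coordinates are even and its last two are odd). Having matched norm and type in every case, Wall's theorem identifies the $\ogr{T}{}$-orbit of an arbitrary primitive vector of the given norm and type with that of the stated representative, which finishes the proof.

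I expect the only genuinely delicate point to be the parity bookkeeping modulo $8$ that forces $k\equiv 1\bmod 4$ and, correspondingly, the choice of a primitive characteristic representative whose half-integer-looking second coordinate $\tfrac12(1-k)$ is in fact an even integer; beyond this the argument is a direct application of the quoted orbit theorem together with the elementary norm computations above.
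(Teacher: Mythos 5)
Your proof is correct and takes essentially the same route as the paper: the paper deduces the proposition directly from the quoted theorem of Wall (``two primitive vectors of the same norm squared and of the same type are in the same $\ogr{T}{}$--orbit''), leaving implicit exactly the verifications you spell out, namely the norm and primitivity checks for the three representatives, the fact that odd-norm vectors are automatically ordinary, and the congruence argument forcing $k\equiv 1 \bmod 4$ in the characteristic case. The only nitpick is your phrase that the characteristic representative has integral entries ``precisely when $k\equiv 1\bmod 4$'' (integrality only needs $k$ odd), but this does not affect the argument since the mod $8$ computation already restricts the characteristic case to $k\equiv 1\bmod 4$.
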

\begin{rmk} \label{OtherGroups}
1) From the description of the subgroups $\so{T}{}$ and $\so{T}{+}$  in Remark~\ref{ExchCompo}, we see that  the   ``extra'' isometries $c$ and $a$ do not change  the two  typical vectors $(1,-k,0,0,0,0)$, $(2,\half (-k+1),0,0,1,1)$ while   $a$ replaces  $(1,-k,0,0,1,0)$ by  $(1,-k,0,0,0,1)$. This can be counteracted upon applying   the map $\text{diag}(-1,-1,-1,1,U) \in \so{T}{+}$. In other words, the preceding Proposition remains true for orbits under the two subgroups $\so{T}{}$ and $\so{T}{+}$.

\noindent
2) Suppose that $-d=\qf{\mathbf{x},\mathbf{x}}$  is a negative   even number. It follows quite easily that in the non-characteristic case  $\mathbf{x}^\perp $ is isometric to $\qf{d}\perp U\perp \qf{-1}\perp \qf{-1}$. In the characteristic case this is subtler. For instance, if $d=-2k$ and $k$ is a sum of two squares, say $k=u^2+v^2$, the vector  $\mathbf{x}$ is in the orbit of $(0,0,0,0,u+v,u-v)$ and so $\mathbf{x}^\perp\simeq  U\perp U\perp \qf{-d} $. This is the case if $k=a^2b$ with $b$ square free and $b\equiv 1 \bmod 4$. However, in the general situation the answer is more complicated. 
The situation over the rational numbers  is easier to explain. For later reference we introduce
\[
\Delta(\mathbf{x} ):=    -\half\qf{\mathbf{x} ,\mathbf{x} } = \half (x_5^2+x_6^2- 2(x_1x_2+ x_3x_4)) >0.
\]
Then, completing the square, one finds:
\begin{equation}\label{eqn:OverQ}
\mathbf{x}^\perp \sim_\Q U\perp \qf{-2}^2\perp \qf{2\Delta(\mathbf{x} )}.
\end{equation}
\end{rmk}

%
%
 
\begin{corr} \label{OneOrbit}
Consider  the set $Y\subset T$ of vectors of the form
 \[
\mathbf{y}=(2y_1,2y_2,2y_3,2y_4,y_5+y_6,y_5-y_6)\in T , y_i\in \Z,\, \gcd(y_1,\dots,y_6)=1.
\]
We have
\[
\Delta(\mathbf{y} ) =   y_5^2+y_6^2- 4(y_1y_2+ y_3y_4).
\]
If $y_5\not \equiv y_6\bmod 2$, the vector is a characteristic  primitive vector, $\Delta\equiv 1  \bmod 4$ and $\mathbf{y}$ is in the orbit of $(2, \frac 12(1- \Delta),0,0,1,1)$

If $y_5\equiv y_6\bmod 2$ the vector  $\half\mathbf{y} \in T$ is primitive and non-characteristic and either $\Delta\equiv 0\bmod 4$ and $\half\mathbf{y}$ is in the orbit of $(1, -\frac 14 \Delta,0,0,0,0)$, or $\Delta\equiv 2\bmod 4$ and $\half\mathbf{y}$ is in the orbit of $(1, \frac 14(2- \Delta),0,0,1,0)$

 Hence two vectors in $Y$  with the same $\Delta$--invariant are in the same $\ogr{T}{}$--orbit.
 Conversely, if the $\Delta$--invariants are different the vectors are in different orbits.
\end{corr}   
 
To be able to make a comparison between \cite{Her} and \cite{Mats} we have to realize  that
$T(2)$ is the transcendental lattice of the K3 surface in \cite{Mats} while the lattice which plays a role in \cite{Her} is the lattice $T(-1)$. 
The special vectors $\mathbf{y}\in Y$ related to Hermann's paper are not necessarily primitive. A primitive vector $\mathbf{y}^*$ in the line $\Z\cdot \mathbf{y}$  is called a \emph{primitive representative} for $\mathbf{y}$. This vector will be considered   as a vector of $T(2)$. This makes the transition between the two papers possible.

\begin{exmple} 
\label{warning}
As a {\bf warning}, we should point out
that it might happen that primitive vectors $ \mathbf{y}^*$ with the same norm squared in $T(2)$ correspond to \emph{different} $\Delta(\mathbf{y})$. For example $\mathbf{y}=(2,-2,0,0,0,0)$ and $(0,0,0,0,1,1)$  correspond to $(1,-1,0,0,0,0)$, respectively $(0,0,0,0,1,1)$. Both vectors in $T(2)$ have norm squared $-4$ while the first (with $y_1=1,y_2=-1$) has $\Delta= 4$  and the second  has $\Delta=1$, since $y_5=1,y_6=0$. From the above it follows that the  two are \emph{not} in the same orbit under the orthogonal group of $T(2)$.
\end{exmple}

Observe now that divisors in our moduli space   are cut out by hyperplanes in  $\bP(T\otimes\C)$ orthogonal to elements  $t\in T$ and any multiple of $t$  determines  the same divisor. We get therefore all possible divisors by restricting ourselves to the set $Y\subset T$. Accordingly we use the subgroup of $\so{T}{*}$ preserving the set $Y$ of vectors of this  form.   
Then  it is natural to consider the  basis $\set{2g_1,2g_2,2g_3,2g_4,g_5+g_6,g_5-g_6}$ so that the new coordinates of $\mathbf{y}$ become $(y_1,y_2,y_3,y_4,y_5, y_6)$. We may identify this vector with $\mathbf{y}^*$.

\begin{rmk}\label{OrbitUnderCongSubGroup} Suppose  $\mathbf{y}\in Y$  as in Corr.~\ref{OneOrbit}. Set  $\Delta(\mathbf{y} )=\Delta$. 
Let $n(\Delta)$ the number of different $\so{T}{*} (2)$--orbits in a given $\so{T}{*} $-orbit  for  $ \mathbf{y}\in T$ when $\Delta \equiv 0 \bmod 4$, respectively $\half\mathbf{y}$ else.
Using  Corr.~\ref{OneOrbit} and 
Rem.~\ref{roots}.2 one sees  the following:
\begin{itemize} 
\item If $\Delta \equiv 0 \bmod 4$ then $n(\Delta)=15$;
\item If $\Delta  \equiv 2 \bmod 8$ then $n(\Delta)=10$;
\item If $\Delta  \equiv 6 \bmod 8$  then $n(\Delta)=6$;
\item If $\Delta  \equiv 1 \bmod 4$  then $n(\Delta)=1$.
\end{itemize}
Indeed, for instance  a vector for which   $\Delta\equiv 0 \bmod 4$ is  in the $\so{T}{*} $-orbit of $(1,-\frac{\Delta}{4},0,0,0,0)$ which corresponds to $(1,0,0,0,0,0)$ or $(1,1,0,0,0,0)$ in $\bF_2^6$ according to whether $\Delta\equiv 0\bmod 8$ or $\Delta\equiv 4\bmod 8$. Both of these have orbitsize $15$ under $\ogr{q_{T(2)}}{}$. If $\Delta  \equiv 2,6 \bmod 8$, in applying  Rem.~\ref{roots} 2., one has to take care of the extra involution explaining why the number of orbits $n(\Delta)$ is half the orbitsize under $\ogr{q_{T(2)}}{}$.

In fact, this result is completely equivalent to \cite[Prop. 2]{Her} which is stated below (Prop.~\ref{SpecialDivisors}).
\end{rmk}

\section{A Moduli Interpretation: the Abelian Varieties Side}\label{sec:ModAV}

\subsection{Special Abelian Varieties}\label{ssec:SAV}
We say that an even  dimensional  polarized Abelian variety $(A,E)$ is of \textbf{$K$-Weil type} if $\End_\Q (A)$ contains an imaginary quadratic field $K=\Q(\alpha) \subset \C$ such that the action of $\alpha$ on the tangent space of $A$  at $0$  has  half of its eigenvalues equal to $\alpha$ and half of  its  eigenvalues equal to $\bar\alpha$ (note that this does not depend on the embedding $K\into \C$). 
We say that \textbf{$\alpha$ has type $(k,k)$}, where $2k=\dim A$.
Moreover, we want that $E(\alpha x,\alpha y)=|\alpha|^2 E(x,y)$.  
If  $\alpha=\ii$ this means that $A$ admits  an automorphism $M$ with $M^2=-1$ which preserves the polarization. Equivalently, $R\subset \End(A,E)$.

As is well known (cf. \cite[\S~1.2]{Her}, \cite[\S 10]{vG}), the   symmetric domain $\mathbf{H}_2$ parametrizes  such  Abelian $4$--folds of $\Q(\ii)$--Weil type.  
We recall briefly how this can be seen. Consider the lattice $V_R =R^4$ equipped with the skew form $\mathbf{J}$ (see \eqref{eqn:deff}). The complex vector space $V= \C^4$, considered as a real vector space, contains $V_R$ as a lattice and $\mathbf{J}$ is a unimodular integral form on it. Weight  $-1$ Hodge structures on $V$ polarized by this form are given by complex structures $J$ that preserve the form. They correspond to  principally polarized Abelian $4$-folds $A$, and if $J$  commutes with multiplication by $\ii$ the Abelian variety $A$  admits an order 4 automorphism $M$ of type $(2,2)$ which preserves the polarization. The converse is also true. 

To get the link with the domain $\mathbf{H}_2$, 
recall from Remark~\ref{CharactH2} that points in $\mathbf{H}_2$ correspond precisely to complex $2$-planes $P$ in $V$ for which $(\ii\cdot f)|P>0$.  The  direct sum splitting of  $V= P\perp P^\perp$  can be used to define a  complex structure  $J$ on the $8$-dimensional real vector space $V$  as desired by imposing  $J|P=\ii\mathbf{1}$, $J|P^\perp=-\ii  \mathbf{1}$. This complex structure commutes with multiplication by $\ii$ on $V$ and preserves $\ii f$ (since this is a hermitian form).   The embedding \eqref{eqn:H2andSiegel}  identifies the image with those $\tau\in\eh_4$ that form the  fixed locus  of  the  order $4$ automorphism formed from $\mathbf{J}$ (see \eqref{eqn:deff}
\[
\begin{pmatrix} \mathbf{J} & \mathbf{0}_4\\
\mathbf{0}_4 &  \mathbf{J}
\end{pmatrix}
\in \smpl{4;\Z}.
\]
This automorphism corresponds to multiplication with $\ii$ on the Abelian $4$-fold.

 The discrete group  $\ugr{(2,2) ;R}{*}$ acts naturally on $\mathbf{H}_2$. It sends an Abelian variety of the given type to an isomorphic one. This is clear for $\ugr{(2,2) ;R}{ }$.  
Regarding $\tau$, by   \cite[\S~1.2]{Her} the embedding \eqref{eqn:H2andSiegel} is equivariant with respect to it; indeed, it acts as an integral symplectic matrix on $\eh_4$ and hence also $\tau$ permutes isomorphic Abelian varieties. Conversely, since $\ugr{(2,2) }{*}$ modulo its center is the full group of isomorphisms of $\mathbf{H}_2$ it follows that two isomorphic Abelian varieties with multiplication by $R$ are  in the same $\ugr{(2,2) ;R}{*}$--orbit.\footnote{If we would consider such Abelian varieties up to \emph{isogeny} we would classify the isomorphism classes of  Abelian 4-folds of $\Q(\ii)$--Weil type.}
So  the quotient   
 \[
 { \mathbf{M}}:= \mathbf{H}_2/ \ugr{(2,2) ;R} {*} 
 \]
  is  the moduli space of principally polarized Abelian fourfolds with multiplication by  $R$.

\subsection{Relation With Special Weight $2$ Hodge Structures}\label{sec:Tconstruct}

Consider the Hodge structures parametrized by the domain $\mathbf{D}_4$ introduced  in \S~\ref{ssec:Domains}. 
The construction of \S~\ref{ssec:SAV} relates  such Hodge structures to  polarized Abelian  $4$-folds $A$ with multiplication by $R$.  
Indeed,  $V=H_1(A;\R)$  underlies an integral  polarized Hodge structure of weight $-1$ and rank $8$ admitting an extra automorphism $M$ of order $4$ induced by $\ii\in\End(A)$. Giving a  polarized integral Hodge structure on $V$ of weight $1$ is the same as giving a complex structure $J$ preserving the polarization; moreover,  $M$ and $J$ commute. According to \cite[\S~3]{Lom} this can now be rephrased as follows.
 Since $V=H_1(A;\R)$  underlies a rational  polarized Hodge structure of weight $-1$ and rank $8$, the second cohomology  $ H^2(A)= \mathsf{\Lambda}^2 H_1(A)^*$, inherits a polarized Hodge structure of weight $2$ and rank $28$. We view $(V,J)$   as a $4$-dimensional complex vector space
 and hence we get a complex subspace $\mathsf{\Lambda}^2_\C V^* \subset H^2(A;\R)$ of complex dimension $6$ and hence a real Hodge structure of dimension $12$. In fact it can be seen to be rational. Recall from Remark~\ref{AltDescr} that there is a further $\C$--anti linear involution $t$  on $\mathsf{\Lambda}^2_\C V^*$. Its  $(+1)$--eigenspace $T(A)$ has dimension $6$  and gives  a polarized Hodge substructure of $H^2(A) $ of weight $2$ and Hodge numbers $(1,4,1)$ as desired. 
So, this   construction explains  the isomorphism 
\[
 \mathbf{M}:=  \mathbf{H}_2/\ugr{(2,2)  ;R}{*}   \mapright{\simeq}   \mathbf{D}_4/\so{T}{*}
\]
Hodge theoretically as the the one  induced by $ A \mapsto T(A) $.
 
 If instead we divide out by the congruence subgroup $\ugr{(2,2) ;R}{*}(\omega) $ the quotient  $\mathbf{M}^*:=  \mathbf{H}_2/\ugr{(2,2) ;R}{*}(\omega)$ under  the natural morphism $\mathbf{M}^* \to \mathbf{M}$ is    Galois  over of $\mathbf{M}$ with group $\smpl{4;\bF_2}$:
  \[
\begin{diagram}
 \mathbf{M}^*  & \mapright{\simeq} & \mathbf{D}_4/\so{T}{*}(2)   \\
\darrow{}{p}{3ex} &&  \darrow{}{}{3ex} \\
\mathbf{M}&  \mapright{\simeq}&  \mathbf{D}_4/\so{T}{*}.
\end{diagram}
\]

\subsection{Hypersurfaces in the Moduli Spaces}\label{sec:Orbs}

Any  line $[a]\in \bP(T\otimes\Q )$ defines the divisor $D_{[a]}=\sett{x\in \mathbf{D}_4}{\langle a, x\rangle =0}$  inside the domain    $\mathbf{D}_4$.  As explained in \S~\ref{ssec:Orbits}, we only consider representatives $a\in T$ which belong to the set $Y\subset T$ whose  coordinates with respect to the basis $\set{g_1,\dots , g_6}$ 
are of the form:
\begin{equation}\label{eqn:SpecialY}
\mathbf{y}=(2y_1,2y_2,2y_3,2y_4,y_5+y_6,y_5-y_6) \in T.  
\end{equation}
The corresponding divisor $D_{[\mathbf{y}]} $ inside  $ \mathbf{H}_2$ can be described  by means of  the skew symmetric matrix
\begin{equation*} 
M(\mathbf{y})  :=  \begin{pmatrix}  0 & - y_2 &   \half(y_5-\ii y_6)   & -  y_4\\
 y_2& 0 & - y_3  & -  \half (y_5+\ii y_6) \\
 - \half (y_5-\ii y_6)   &  y_3 & 0 & - y_1\\
 y_4 &    \half (y_5+\ii y_6)  &  y_1 &0 \end{pmatrix}.
\end{equation*}
Indeed, we have \cite[p. 119]{Her}
\[
D_{[\mathbf{y}]}  := \left\{W\in  \mathbf{H}_2 \mid 
\begin{pmatrix} \Tr W & \mathbf{1}_2\end{pmatrix} M(\mathbf{y}) \begin{pmatrix}  W \\  \mathbf{1}_2\end{pmatrix}=
\begin{pmatrix}\mathbf{0}_2 \end{pmatrix}
\right\}.
\]
Then $A\in \su{(2,2) }$ acts by sending $M= M(\mathbf{y})$ to 
\[
A[M]:= \Tr A M A= M(\mathbf{z}), \qquad \mathbf{z}  = \mathsf{\Lambda}^2 A(\mathbf{y} ).
\]
For a   skew symmetric matrix $M$ with coefficients  in any field $K$  the determinant is always a square in the field and any   root is called a pfaffian  of $M $ and denoted by $\text{\rm Pf}(M)$. If  $K\subset \R$ we take the \emph{positive root} and call it \textbf{the pfaffian}. By Corol.~\ref{OneOrbit} and Remark~\ref{OtherGroups}, 1) we have
\begin{prop} Given a positive integer $\Delta$, there is precisely one $\so{T}{+}$  orbit of primitive vectors $ \mathbf{y}\in Y$ 
for which  $\Delta=-\half \qf{\mathbf{y},\mathbf{y}}$.
All  such vectors   $\mathbf{y}$ are  $\su{(2,2) }$-equivalent  and   the   corresponding  pfaffians {\rm Pf}$(M_{\mathbf{y}})$ are all equal. 

Moreover, such  divisors $D_{[\mathbf{y}]}$ define  the same  irreducible divisor $D_\Delta$ in the moduli space $\mathbf{M}$.
\end{prop}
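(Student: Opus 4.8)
The plan is to assemble the statement from the orbit classification already in hand, the group isomorphism of Lemma~\ref{ClassicalIsom}, and the behaviour of the pfaffian under congruence of skew forms. Corollary~\ref{OneOrbit} tells us that a vector $\mathbf{y}\in Y$, through its primitive representative (that is, $\mathbf{y}$ itself when $\Delta\equiv 1\bmod 4$ and $\half\mathbf{y}$ when $\Delta\equiv 0,2\bmod 4$), is determined up to $\ogr{T}{}$ by the invariant $\Delta=-\half\qf{\mathbf{y},\mathbf{y}}$, and that distinct values of $\Delta$ give distinct $\ogr{T}{}$-orbits. Remark~\ref{OtherGroups}, 1) shows that this same classification persists after replacing $\ogr{T}{}$ by its index-four subgroup $\so{T}{+}$: the extra generators $a$ and $c$ either fix the chosen normal forms or can be corrected by an element of $\so{T}{+}$, so each $\ogr{T}{}$-orbit remains a single $\so{T}{+}$-orbit. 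This already gives the first assertion, that for every positive integer $\Delta$ there is exactly one $\so{T}{+}$-orbit of primitive vectors in $Y$ with that invariant.

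For the second assertion I would transport the $\so{T}{+}$-equivalence across the isomorphism $\varphi$ of Lemma~\ref{ClassicalIsom}. If $\mathbf{y},\mathbf{y}'\in Y$ share the invariant $\Delta$, then by the above there is $g\in\so{T}{+}$ with $g(\mathbf{y})=\mathbf{y}'$; writing $g=\varphi(A)=\lwedge^2 A\,|\,T$ for some $A\in\su{(2,2) ;\Z[\ii]}$ (unique up to sign) exhibits $\mathbf{y}'=\lwedge^2 A(\mathbf{y})$, so the two vectors are $\su{(2,2) }$-equivalent. For the pfaffians I would use the formula quoted from \cite{Her}, namely that $A$ sends $M(\mathbf{y})$ to $A[M(\mathbf{y})]=\Tr A\,M(\mathbf{y})\,A=M(\mathbf{y}')$, a congruence of skew-symmetric matrices. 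The pfaffian obeys the polynomial identity $\mathrm{Pf}(\Tr A\,M\,A)=\det(A)\,\mathrm{Pf}(M)$, and since $A$ lies in the \emph{special} unitary group one has $\det A=1$; hence $\mathrm{Pf}(M(\mathbf{y}'))=\mathrm{Pf}(M(\mathbf{y}))$, so the pfaffian is constant along the orbit.

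The third assertion is a formal consequence of equivariance. For any isometry $g$ of $T$ one has $\qf{g\mathbf{y},gx}=\qf{\mathbf{y},x}$, so $g$ carries $D_{[\mathbf{y}]}=\{x\in\mathbf{D}_4\mid \qf{\mathbf{y},x}=0\}$ onto $D_{[g\mathbf{y}]}$. Applying this to $g\in\so{T}{+}\subset\so{T}{*}$ with $g(\mathbf{y})=\mathbf{y}'$ shows that $D_{[\mathbf{y}]}$ and $D_{[\mathbf{y}']}$ have the same image under the quotient map $\mathbf{D}_4\to\mathbf{M}=\mathbf{D}_4/\so{T}{*}$, and this common image is the divisor $D_\Delta$. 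Since $\qf{\mathbf{y},\mathbf{y}}=-2\Delta<0$, the orthogonal complement $\mathbf{y}^\perp$ has signature $(2,3)$, so $\mathbf{y}^\perp\cap\mathbf{D}_4$ is a type IV domain of dimension $3$, in particular irreducible; its image under the holomorphic quotient map is therefore an irreducible divisor, which proves that $D_\Delta$ is irreducible.

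The genuinely new input beyond the cited results is the pfaffian computation, so the step I expect to require the most care is the second one: checking that the $\su{(2,2) }$-action on $M(\mathbf{y})$ is indeed the congruence $M\mapsto \Tr A\,M\,A$ intertwined with $\mathbf{y}\mapsto\lwedge^2 A(\mathbf{y})$, and that the transformation law $\mathrm{Pf}(\Tr A\,M\,A)=\det(A)\,\mathrm{Pf}(M)$ together with $\det A=1$ forces equality of the pfaffians. The only other point demanding attention is the bookkeeping between a vector $\mathbf{y}\in Y$ and its primitive representative (the factor $\half$ in the cases $\Delta\equiv 0,2\bmod 4$), which must be tracked so that ``primitive vector'' is read consistently throughout.
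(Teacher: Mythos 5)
Your proof is correct and takes essentially the same route as the paper: the paper's entire justification for this proposition is the citation ``By Corol.~\ref{OneOrbit} and Remark~\ref{OtherGroups}, 1)'', which is exactly your first step. The extra details you supply --- lifting an element of $\so{T}{+}$ through the isomorphism of Lemma~\ref{ClassicalIsom}, the transformation law $\mathrm{Pf}(\Tr A\, M\, A)=\det(A)\,\mathrm{Pf}(M)$ together with $\det A=1$, and the equivariance plus connectedness of the hyperplane section $\mathbf{y}^\perp\cap\mathbf{D}_4$ for irreducibility --- are precisely the arguments the paper leaves implicit.
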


\begin{rmq} The image of such a  divisor  $D_\Delta$ in the moduli space $\mathbf{M}$ can   be considered as a three dimensional modular variety  $\eh_2/\Gamma$, where $\Gamma$ is a discrete subgroup of  a certain  modular group $\smpl{2;\R}$ (depending only on    $\Delta$). For this point of view see \cite{Her}. For the  special case $\Delta=1$  see also \eqref{eqn:XQiso} in the Introduction.
\end{rmq}

 Under the congruence subgroup $\su{(2,2) ;R}(\omega)$ there are  more orbits corresponding to the fact that $D_\Delta$ may split under the cover $\mathbf{M}^*\to \mathbf{M}$. By  Remark~\ref{OrbitUnderCongSubGroup}  we have: 
  \begin{prop}[\protect{\cite[Prop. 2]{Her}}] \label{SpecialDivisors}
 Under the $\smpl{4,\bF_2}$-cover $\pi :\mathbf{M}^*\to \mathbf M$ the divisor $\pi^{-1}D_\Delta$ associated to a primitive class $a\in T^*_\Z$ with $q(a)= \Delta$ splits in $15$, $10$, $6$ or $1$ components  if $\Delta\equiv 0 \bmod 4$, $\equiv 2 \bmod 8, \equiv 6 \bmod 8$, respectively $\equiv 1 \bmod 4$.
 \end{prop}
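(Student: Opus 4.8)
The plan is to convert the assertion about the number of irreducible components into a lattice--theoretic orbit count, and then to read off the four numbers from the orbit structure of the discriminant form. First I would recall from \eqref{eqn:CongruIdent2} that the cover $\pi:\mathbf{M}^*\to\mathbf{M}$ is Galois with group $\so{T}{*}/\so{T}{*}(2)\simeq\smpl{4;\bF_2}$, and that by the preceding Proposition (a consequence of Corollary~\ref{OneOrbit}) the divisor $D_\Delta\subset\mathbf{M}$ is the image of the hyperplane sections $D_{[\mathbf{y}]}\subset\mathbf{D}_4$ as $\mathbf{y}$ runs through a single $\so{T}{*}$--orbit of vectors in $Y$ with invariant $\Delta$. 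Since $-\mathbf{1}\equiv\mathbf{1}\bmod 2$ lies in $\so{T}{*}(2)$, the two defining vectors $\pm\mathbf{y}$ of a line are $\so{T}{*}(2)$--equivalent, so each $\so{T}{*}(2)$--orbit inside this fixed $\so{T}{*}$--orbit cuts out exactly one irreducible divisor in $\mathbf{M}^*=\mathbf{D}_4/\so{T}{*}(2)$, and distinct orbits give distinct components. Thus the number of components of $\pi^{-1}(D_\Delta)$ equals the number of $\so{T}{*}(2)$--orbits in the given $\so{T}{*}$--orbit, which is precisely the integer $n(\Delta)$ computed in Remark~\ref{OrbitUnderCongSubGroup}.

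It then remains to evaluate $n(\Delta)$, and for this I would pass to the reduction modulo $2$. By Remark~\ref{roots}.2 the homomorphism $\ogr{T(2)}{}\to\ogr{q_{T(2)}}{}$ is surjective with kernel the congruence subgroup, so the $\so{T}{*}(2)$--orbits inside a single $\so{T}{*}$--orbit are governed by the action of the covering group on the images in $T(2)^*/T(2)\simeq\bF_2^6$, whose orbit types are listed explicitly in that remark. Inserting the normal forms of Corollary~\ref{OneOrbit} and reducing them modulo $2$ settles two of the cases at once: when $\Delta\equiv 0\bmod 4$ the representative $(1,-\tfrac{\Delta}{4},0,0,0,0)$ reduces to $(1,0,0,0,0,0)$ or $(1,1,0,0,0,0)$ according as $\Delta\equiv 0$ or $4\bmod 8$, both lying in orbits of length $15$, so $n(\Delta)=15$; when $\Delta\equiv 1\bmod 4$ the characteristic representative $(2,\tfrac12(1-\Delta),0,0,1,1)$ reduces to the fixed class $\kappa$, so $n(\Delta)=1$.

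The delicate point, which I expect to be the main obstacle, is the pair of cases $\Delta\equiv 2\bmod 8$ and $\Delta\equiv 6\bmod 8$. Here the reduced vector falls into one of the two orbits with $q_{T(2)}=\pm\tfrac12$, of lengths $20$ and $12$ under the full group $\ogr{q_{T(2)}}{}$. The subtlety is that the group actually governing the components is the smaller group $\so{T(2)}{*}/\so{T(2)}{*}(2)\simeq\smpl{4;\bF_2}$: the extra involution $\tilde\tau$ exchanging the last two coordinates has been absorbed into the congruence subgroup and acts trivially on the quotient, with the effect that each of these two $\ogr{q_{T(2)}}{}$--orbits breaks into two equal pieces. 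Carefully tracking this factor of two --- and checking that the orbit of length $20$ (hence $n=10$) corresponds to $\Delta\equiv 2\bmod 8$ while the orbit of length $12$ (hence $n=6$) corresponds to $\Delta\equiv 6\bmod 8$ --- is the only step requiring real care. Assembling the four cases then yields the counts $15,10,6,1$, matching \cite[Prop.~2]{Her}.
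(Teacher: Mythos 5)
Your proposal is correct and takes essentially the same route as the paper: the paper's own justification of Proposition~\ref{SpecialDivisors} is exactly Remark~\ref{OrbitUnderCongSubGroup}, which identifies the number of components of $\pi^{-1}D_\Delta$ with the orbit count $n(\Delta)$ and evaluates it by reducing the normal forms of Corollary~\ref{OneOrbit} modulo $2$ against the orbit list of Remark~\ref{roots}.2, with the halving in the cases $\Delta\equiv 2,6 \bmod 8$ caused by the involution $\tilde\tau$ having been absorbed into $\so{T}{*}(2)$. Your additional observations (that $-\mathbf{1}\in\so{T}{*}(2)$, so $\pm\mathbf{y}$ cause no ambiguity, and the explicit matching of $\Delta\bmod 8$ with the orbits of lengths $20$ and $12$) only make explicit what the paper leaves implicit.
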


 \section{Moduli Interpretation: Special K3 surfaces} 
 \label{sec:ModsK3}
 
\subsection{Configurations of 6 Lines in the Plane}\label{subsec:6Lines}
For this section see \cite{MSY} and in particular  Appendix A in it.

Let $\mathbf{P}=\bP^2$ and $\mathbf{P}^*$ the  dual projective space. For any integer $n\ge 4$ a configuration  of $n$-tuples of points in $\mathbf{P}$ corresponds to  a configuration    for $n$-tuples of lines in $\mathbf{P}^*$. The notion of \emph{good  position} is easy to describe on the dual space as follows.
An  $n$-tuple of lines $(\ell_1,\cdots,\ell_n)\in(\mathbf{P}^*)^n$ is called \emph{in good  position} if the corresponding curve $\ell_1\cup\cdots\cup \ell_n\subset \bP^2$ has only ordinary double points. They form a Zariski open subset 
\[
U_n=\set{(\ell_1,\cdots,\ell_n)  \text{  in good position }} \subset  (\mathbf{P}^*)^n.
\]
The  linear group $\gl {3;\C}$ acts on this space and we form the quotient
\[
\begin{array}{lcl}
X_n:&=& U_n/ \gl{3;\C}: \text{\bf configuration space of $n$ ordered lines}\\
&&\hspace{6.5em} \text{\bf   in good  position in $\bP^2$.}
\end{array}
\]
This is a Zariski open subset of $\C^{2(n-4)}$.
The  symmetric group $\germ S_n$ acts on $X_n$ and the quotient   is the configuration space of $n$  unordered  lines in general position. 
 
If $n=6$ there is an extra involution on $X_6$ induced by the correlation 
\[
\delta: \mathbf{P}\to \mathbf{P}^*,\quad x \mapsto \text{ polar of } x \text{ with respect to a nonsingular conic } C
\]
as follows: the 6 lines  $\set{\ell_1,\dots,\ell_6}$ form 2 triplets, say $\set{\ell_1,\ell_2,\ell_3}$ and $\set{\ell_4,\ell_5,\ell_6}$  each having precisely 3 intersection points. If we set $P_{ij}=\ell_i\cap\ell_j$ we thus get the triplets $\set{P_{12},P_{13},P_{23}}$ and $\set{P_{45},P_{46},P_{56}}$.  
A correlation $\delta$ is an involutive  projective transformation: it sends the line through $P$ and $Q$ to the line through $\delta(P)$ and $\delta(Q)$. In particular, three distinct  points which are the vertices of a triangle are sent to the three   sides of some (in general different)  triangle. This gives already a involution on the variety  of three ordered non-aligned points which is easily seen to be holomorphic. However, it descends as the trivial involution on the space of unordered non-aligned triples since a projectivity maps any such triple to a given one.

The above procedure for  six points gives a holomorphic involution on  $U_6$  
\[
*_C(\ell_1,\dots,\ell_6)= (\delta P_{12} ,\delta P_{13},\delta P_{23},\delta P_{45},\delta P_{46},\delta P_{56}).
\]
It descends to an involution   on $\mathbf{X} =X_6$  which  does not depend on the choice of $C$ and commutes with the action of the symmetric group $\germ S_6$. We set
\[
\mathbf{Y} =\mathbf{X}  /  \set{*}.
\]
The involution $*_C$ has as  fixed point set on $U_6$ the   6-tuples of lines all  tangent to  the conic $C$. On $X$ this gives a  non-singular divisor $XQ \subset \mathbf{X} $, the configuration of 6-uples of lines tangent to some fixed conic. This shows in particular that $*_C$ is a non-trivial involution, in contrast to what happens for triplets of points.

We need an alternative description of this involution:

 \begin{prop} \label{Cremona} The involution $*$ is,
  up to a projective transformation,
   induced by a  standard Cremona transformation with fundamental points $P_{14}, P_{25}, P_{36}$.
\end{prop}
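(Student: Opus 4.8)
The plan is to reduce to a normal form and then identify the two involutions on the configuration space by classical projective geometry. First I would use the $\gl{3;\C}$--action to move the three fundamental points to the vertices of the coordinate triangle, $P_{14}=[1:0:0]$, $P_{25}=[0:1:0]$, $P_{36}=[0:0:1]$. In these coordinates the standard quadratic Cremona transformation is the involution $\sigma\colon[x:y:z]\mapsto[yz:zx:xy]$, which fixes each vertex and, on the pencil of lines through a vertex, realises the harmonic involution exchanging the two coordinate sides. The decisive structural point is that, since $P_{14}=\ell_1\cap\ell_4$, $P_{25}=\ell_2\cap\ell_5$, $P_{36}=\ell_3\cap\ell_6$, each of the six lines passes through exactly one fundamental point ($\ell_1,\ell_4$ through $[1:0:0]$, and so on). Hence every $\sigma(\ell_i)$ is again a \emph{line} (a line through a fundamental point is not blown up to a conic), $\sigma(\ell_i)$ passing through the same vertex as $\ell_i$; so $\sigma$ carries the branch sextic to a new six--line configuration and descends to an involution of $\mathbf{X}$. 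It then remains to match this involution with $*$.

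Next I would compute the correlation image and compare. Since the involution induced on $\mathbf{X}$ by $*_C$ does not depend on $C$, I am free to choose the conic adapted to the normal form. Writing $Q_i$ for the pole of $\ell_i$ with respect to $C$, reciprocity of polarity gives $\delta P_{12}=\overline{Q_1Q_2}$, $\delta P_{13}=\overline{Q_1Q_3}$, $\delta P_{23}=\overline{Q_2Q_3}$ (and likewise for the second triplet), so $*_C$ replaces each of the two triangles $\ell_1\ell_2\ell_3$, $\ell_4\ell_5\ell_6$ by its polar triangle. Because $\ell_1,\ell_4\ni P_{14}$ forces $Q_1,Q_4$ onto the polar line of $P_{14}$ (and similarly for the others), the six poles lie three--and--three on the polars of the fundamental points; comparing the sides of the two polar triangles with the six lines $\sigma(\ell_i)$ through the vertices should then exhibit a single projective transformation carrying one configuration to the other, after the appropriate relabelling by an element of $\germ S_6$. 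The classical input from \cite{Do-Ort} --- the description of the quadratic Cremona involution based at a triangle as conjugation with respect to an adapted conic, together with the geometry of conjugate (polar) triangles --- is exactly what makes this identification transparent.

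The step I expect to be the main obstacle is precisely this final matching. The naive index--by--index identification does \emph{not} hold on the nose, because $\sigma$ acts line--by--line whereas $*_C$ mixes the lines within each triplet; so one must determine and carry along the permutation in $\germ S_6$ relating the two labellings, and simultaneously produce the projective transformation. A robust way to organise this, and a useful consistency check, is the birational geometry of the double plane underlying the whole discussion: blowing up the three nodes $P_{14},P_{25},P_{36}$ turns $\sigma$ into a morphism which contracts the three ``opposite'' lines $\overline{P_{25}P_{36}}$, $\overline{P_{14}P_{36}}$, $\overline{P_{14}P_{25}}$, and a Picard--class computation (the proper transform $\tilde B=6H-2(E_1+E_2+E_3)$ of the branch sextic pushes down to a curve of degree $6$, namely $\sigma(\ell_1)+\cdots+\sigma(\ell_6)$, while the opposite lines become its new nodes) shows that the transformed branch locus is again a configuration of six lines whose double cover is the \emph{same} K3 surface.

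This last computation is the conceptual backbone: it shows directly that $\sigma$ and $*$ induce the same involution at the level of the double covers, so that $\sigma(\ell_1)+\cdots+\sigma(\ell_6)$ and $*(\ell_1,\dots,\ell_6)$ must define the same point of $\mathbf{X}$ up to the action of $\germ S_6$, which in particular guarantees that the projective transformation and the relabelling sought in the previous paragraph exist. The remaining work --- and the only genuinely computational part --- is to make that projective equivalence and that permutation explicit, thereby promoting the moduli--level coincidence to the precise statement that $*$ is, up to a projective transformation, the Cremona transformation with fundamental points $P_{14},P_{25},P_{36}$.
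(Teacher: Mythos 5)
Your reduction to the standard Cremona involution $\sigma\colon[x:y:z]\mapsto[yz:zx:xy]$ and the observation that each $\ell_i$ is carried to a line through the same fundamental point are correct, but the crux of the proposition --- that the involution of $\mathbf{X}$ induced by $\sigma$ agrees with $*$ up to a projectivity and a relabelling --- is never actually proved. Your direct attempt (matching the polar triangles with the lines $\sigma(\ell_i)$) is left open by your own admission, and the double-cover argument you substitute for it cannot close the gap. First, that computation concerns $\sigma$ alone: it shows that $\ell_1+\dots+\ell_6$ and $\sigma(\ell_1)+\dots+\sigma(\ell_6)$ have isomorphic double covers, but it gives no information whatsoever about what $*$ does to the double cover, so the claim that ``$\sigma$ and $*$ induce the same involution at the level of the double covers'' is unsupported. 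To relate the two you would have to invoke the statement that $*$-related configurations determine the same period point (i.e.\ Theorem~\ref{MSYTheorem}), which is exactly the deep input that this proposition is meant to replace by an elementary explanation (cf.\ the introduction and Remark~\ref{LinesAndCremona}, which \emph{deduces} the isomorphism $X\cong *X$ from the proposition, not conversely). Second, an abstract isomorphism of K3 surfaces does not by itself give equality of points in $\mathbf{M}$, since $\mathbf{M}$ parametrises lattice-polarised surfaces and the Cremona-induced isomorphism does not preserve the quasi-polarisation ($\ell\mapsto 2\ell-e_{14}-e_{25}-e_{36}$); making this compatible requires the Nikulin gluing and Torelli argument of Remark~\ref{LinesAndCremona}. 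Third, even granting all of this, the most you could conclude is that $\sigma(\underline\ell)$ lies in the $H$-orbit of $\underline\ell$, where $H=\germ S_6\times\set{*}$; since that orbit contains both $\germ S_6\cdot\underline\ell$ and $\germ S_6\cdot(*\underline\ell)$, this does not distinguish ``$\sigma$ equals $*$ composed with a relabelling'' from ``$\sigma$ equals a mere relabelling'', so the moduli-level coincidence cannot, as you assert, ``guarantee that the projective transformation and the relabelling exist''.

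The paper's own proof avoids all of this by staying inside classical projective geometry. One dualises: the six lines become six points of $\mathbf{P}^*$, and the sides of the triangle $P_{14}P_{25}P_{36}$ become three non-collinear points $p,q,r$. By \cite[p.~118--119]{Do-Ort}, the standard Cremona transformation with fundamental points $p,q,r$ carries the six points to an \emph{associated} six-tuple; writing the six points as the columns of $Y=(I_3,A)$, the associated set can be normalised to $Y^*=(I_3,-\frac{1}{\det A}A^*)$ (it is characterised by $Y\Lambda\,\Tr Y^*=0$ for some diagonal $\Lambda$), and this relation is precisely the one defining the correlation involution $*_C$ for the conic $C\colon x^2+y^2+z^2=0$ (cf.\ \cite[Appendix A2]{MSY}). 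So the missing step in your write-up is an explicit identification of this kind --- either completing your polar-triangle matching by computation, or importing the association argument; your K3-level computation can then be retained as a corollary, in the role it plays in Remark~\ref{LinesAndCremona}, but not as the backbone of the proof.
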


\begin{proof}
For the proof consult  also   Fig.~\ref{fig:GoodPosition}.
The points $P_{14}, P_{25}, P_{36}$ form  a triangle $\Delta_0$. The lines $\ell_1$, $\ell_2$, $\ell_3$ form a triangle $\Delta_1$ and $\ell_4,\ell_5,\ell_6$ another triangle $\Delta_2$. In the dual plane the sides of the triangle $\Delta_0$ correspond to three non-collinear points, say $p,q,r\in \mathbf{P}^*$.
We denote the points in $\mathbf{P}^*$ corresponding to the lines $\ell_j$ by the same letter.
The configuration of the three triangles $\Delta_0,\Delta_1$, $\Delta_2$ is self-dual in the obvious sense. Note that the cubic curves $\ell_1\ell_2\ell_3=0$ and $\ell_4\ell_5\ell_6=0$ span a pencil of cubics passing through the vertices of the union of the  triangles $\Delta_1\cup \Delta_2$. The same holds for the dual configuration in $\mathbf{P}^*$.

By \cite[p. 118--119]{Do-Ort} this implies that the standard Cremona transformation $T^*$ with fundamental points  $p,q,r$ transforms these "dual" 6 vertices into a so-called associated 6-uple. To see this, let $Y$ be the $3\times 6$ matrix whose columns are the vectors of the six points $\set{\ell_1,\dots,\ell_6}$ (in some homogeneous coordinate system). The corresponding matrix $Y^*$ for the associated point set $\set{T^*\ell_1,\dots,T^*\ell_6}$ by definition satisfies $Y\Lambda \Tr Y^*=0$ for some diagonal matrix $\Lambda$. In our case we can take coordinates in such a way that $Y=(I_3 ,A)$ with $A$ invertible and after a projective transformation we may assume that  $Y^*= (I_3, -\frac{1}{\det A} A^*)$, where $A^*$ is the matrix of cofactors of $A$ so that $A\,\Tr A^*= \det(A) I _3$. This exactly means that the point set which gives $Y$ is related to the point set given by $Y^*$ by the involution $*_C$ where $C$ is the conic $x^2+y^2+z^2=0$.  See the calculations in \cite[Appendix A2]{MSY}.

%
%
%
\qed
\end{proof}

\medskip

The space $\mathbf{X}  $ can be compactified to $\bar{\mathbf{X} }  $ by adding  certain degenerate configurations to which the involution  $*$ extends and the resulting compactification   $\bar {\mathbf{X} }=\bar {\mathbf{X} }/\set{*}$  is naturally isomorphic to $\bP^4$.   The group $\germ S_6$ acts on both sides giving a commutative diagram
\[
\begin{diagram}
 \bar{\mathbf{X} }  & \mapright{\sigma} &  \bar {\mathbf{Y} } \simeq \bP^4   \\
\darrow{}{}{3ex} &&  \darrow{}{\pi}{3ex} \\
\bar {\mathbf{X} }/\germ S_6 &  \mapright{\bar\sigma}&  \bar {\mathbf{Y}}/\germ S_6.
\end{diagram}
\]

\subsubsection{Comparison with Hermann's work}

We now compare the result with \cite{Her}. We need some more details of the above construction. To start with we choose coordinates in $\mathbf{Y} $ by representing first a point in $\mathbf{Y} $ by a $3\times 6$ matrix $(x_{ij})$ (the 6 rows give the six lines) and 
let $d_{ijk}(x)$ be the minor obtained by taking columns $i,j,k$. Then for every permutation $\set{ijk \ell m n}$   of $\set{1,\dots,6}$, consider the 10  Pl\"ucker coordinates $Z_{ijk}:=d_{ijk} d_{ \ell m n}$ which one uses to embed $\mathbf{Y} $ in $\bP^9$. The Pl\"ucker relations $Z_{ijk}-Z_{ij\ell}+Z_{ijm}-Z_{ijn}=0$ then show that this embedding is a linear embedding into $\bP^4\subset \bP^9$.   
Note that the permutation group $\germ S_6$ interchanges the Pl\"ucker coordinates and the image 4-space is invariant under this action.

In \cite[\S~4]{Her} an embedding\footnote{What Hermann calls $\bar X(1+\ii)$ is in our   notation $\bar Y$ and in Matsumoto's notation $Y^*$. }  of  $\bar {\mathbf{Y} }$ into $\bP^5$ with homogeneous coordinates $(Y_0,\dots,Y_5)$ is constructed with image the hyperplane $(Y_0+\cdots+Y_5)=0$.
We may assume that the $Y_i$, $0\le i\le 4$ coincide with some of our Pl\"ucker coordinates which we write accordingly as $\set{Y_0,\dots,Y_9}$. Indeed,  Hermann's six   coordinates are permuted as the standard permutation of $\germ S_6$ on 6 letters and we can scale them in order that the hyperplane in which the image lies is given by the equation  $(Y_0+\cdots+Y_5)=0$.

One of  the   divisors in $\bar {\mathbf{X} }$ added to $\mathbf{X} $ is  needed below. It is called $X_3$ in \cite{MSY} and  has  the $20$ components $X^{ijk}_3$ of configurations of 6 lines  where precisely the  three lines $\ell_i,\ell_j,\ell_k$ meet at one point which create one  triple point. There are 12 further  points where only 2 lines meet.    We now show how to  identify $\sigma(X_3)$  with the divisor $D'_2=\pi^{-1}D_2$  from Prop.~\ref{SpecialDivisors}.  First invoke \cite[Prop. 2.10.1]{MSY} (see also Theorem~\ref{MSYTheorem} below)  which makes the transition from $\mathbf{X}$ to $\mathbf{M}$ possible. Next, from \cite[p. 122-123]{Her}, we infer  that the equation of $D'_2$ reads
\[
  \prod_{abc} (Y_a + Y_b + Y_c)=0.
\]
By  the Pl\"ucker  relations  $Y_a +Y_b +Y_c = \pm Y_d$ for some $d\in\set{0,\dots,9}$. 
Hence $ Y_a + Y_b + Y_c  =  \pm Y_d $, say $\pm Y_d=Z_{ijk}$  
and  the   zero locus of that factor corresponds
to $D_{ijk} \cdot D_{lmn} = 0$.  It follows that indeed $D'_2=X_3$. We observe that there are 20  
irreducible components  $X_3^{ijk}$ but each $Y_d$ gives two of them via the double cover $\sigma$, so we get indeed the 10  divisors of Hermann.

As a side remark,  $\bar{\mathbf{X} } \setminus \mathbf{X} $ contains further divisors, several of which parametrize K3 surfaces, namely whenever the double points in the configuration coalesce to  triple points at worst.
 
We want to stress that the definition of \emph{good} includes rather special configurations, one of which is needed below, namely the ones forming a divisor $\mathbf{X}_{\rm coll}\subset \bar {\mathbf{X} } $ corresponding to 6 lines  $\set{\ell_1,\dots,\ell_6}$ where the intersection points $P_{12}=\ell_1\cap \ell_2, P_{34}=\ell_3\cap \ell_4, P_{56}=\ell_5\cap \ell_6$ of three pairs of lines are collinear. We can identify $\sigma \mathbf{X}_{\rm coll}$ with $\pi^{-1}D_4$ as follows.
From \cite[p.~123]{Her}, we find that the equation of $D_4$ is
\[
 \prod_{ij}  Y_i -Y_j=0.
\] 
The  Pl\"ucker  relations 
yield the equations 
\[
D_{abc}D_{efg} = D_{a'b'c'}D_{e'f'g'}
\]
where in addition to
$\{a,b,c,e,f,g\} = \{a',b',c',e',f',g'\} = \{1,\hdots,6\}$ necessarily (up to commuting
the factors of the products) 
\[
\# \{a, b, c\} \cap \{a', b', c'\} = \# \{e, f, g\} \cap \{e', f', g'\} = 2.
\]
Dually looking at 6 points in $\PP^2$, we obtain the same result if three lines (spanned
by different pairs of such  points) meet in one and the same point. So indeed, we get $\sigma \mathbf{X}_{\rm coll}$. Observe that there are $\frac 1 6 15 \times 6= 15$ ways to make the intersection points collinear giving 15 components as it should.

\subsubsection{Relation to recent work of Kond\=o}

In a recent preprint \cite{kondo},
Kond\=o also studies Heegner divisors on the moduli space
of 6 lines in $\PP^2$
using Borcherd's theory of automorphic forms on bounded symmetric domains of type IV.
Specifically he singles out four divisors in \cite[\S 3]{kondo} which correspond to the cases $\Delta=1,2,4,6$ 
studied extensively in this paper.

\subsection{Double Cover Branched in $6$ Lines in Good Position}
\label{ss:intro}

We refer to \cite[Ch VIII]{fourauthors} for details of the following discussion on moduli of K3-surfaces. 
 The second cohomology  group of K3 surface  $X$ equipped with the cup product pairing is known to be isomorphic to the unimodular even lattice
 \[
 \Lambda:= U\perp U\perp U \perp E_8  \perp E_8 .
 \]
The lattice underlies a weight $2$ polarized Hodge structure with Hodge numbers $h^{2,0}=1$, $h^{1,1}=20$. The N\'eron-Severi lattice $\NS(X)$ gives a sub Hodge structure with Hodge numbers $h^{2,0}=0$, $h^{1,1}=\rho$, the Picard number. Its orthogonal complement $T(X)$, the transcendental lattice, thus also is a polarized Hodge structure with Hodge numbers $h^{2,0}=1$, $h^{1,1}=20-\rho$. The N\'eron-Severi lattice is a Tate Hodge structure, but $T(X)$ has moduli. The two Riemann bilinear relations show that these kind of Hodge structures are parametrized by a type IV domain $\mathbf{D}_{20-\rho}$ (see \S~\ref{ssec:Domains}). Conversely, given a sublattice $T\subset \Lambda$ of signature $(2,n), n<20$, the polarized Hodge structures on $T$ with Hodge numbers $h^{2,0}=1$, $h^{1,1}=n$ are parametrized by a domain $D(T)$ of type IV whose points correspond to K3 surfaces with the property that the   transcendental lattice is contained in $T$. For generic such points the transcendental lattice will be exactly $T$, but upon specialization the surfaces may acquire extra algebraic cycles which show up in $T$. In other words, the transcendental lattice of the specialization becomes strictly smaller than $T$. Note also that $\dim D(T)=n$. These surfaces, commonly called \textbf{$T$-lattice polarized K3 surfaces}, thus have $n$ moduli. It is not true that all points in $D(T)$ correspond to such K3 surfaces: one has to leave out the hyperplanes $H_\alpha$ that are perpendicular to the roots $\alpha$  in $T$; since $T$ is an even lattice, these are elements $\alpha\in T$ with $\qf{\alpha,\alpha}=-2$. 
We quote  the following result \cite[VIII, \S 22]{fourauthors} which makes this precise:
\begin{thm*} Let
\[
D^0(T):= D(T)\setminus \bigcup H_\alpha,\quad \alpha \text{ a root  in } T.
\]
The moduli space of $T$-lattice polarized K3 surfaces is the quotient of $D^0(T)$ by the group $\so{T}{*}$.
\end{thm*}

 Examples of K3 surfaces with $\rho=1$ (having  19 moduli) are the double covers of the plane branched in a generic smooth curve of degree 6 parametrized by a 19-dimensional type IV domain. If we let this curve acquire more and more singularities we get deeper and deeper into this  domain. We are especially interested in those sextics that are the union of six lines in good position (see Fig.~\ref{fig:GoodPosition}) and certain of their degenerations which were  treated in \S~\ref{subsec:6Lines}.  
\begin{figure}[h!] \includegraphics[width=\linewidth]{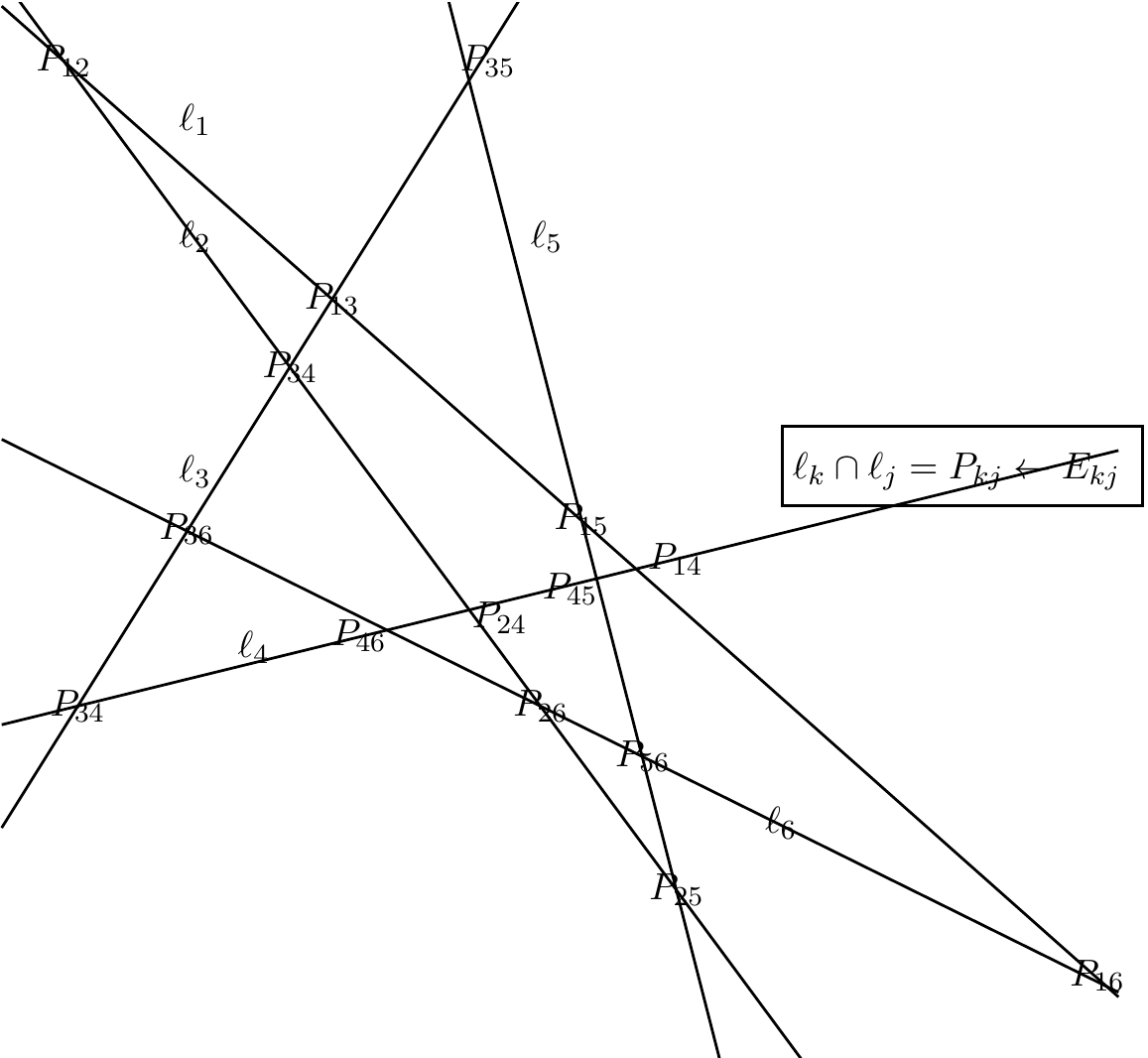}
\caption{The 6   branch lines and the 15 exceptional curves $E_{kj}$}\label{fig:GoodPosition}
\end{figure} 
\begin{prop} Let $X$ be the minimal resolution of the  double of cover of the plane given by an equation:
 \begin{eqnarray}
 \label{eq:double6}
 w^2 = \ell_1(x,y,z) \cdots \ell_6(x,y,z).
 \end{eqnarray}
 Assume that the 6 lines $\ell_i$, $i=1,\dots,6$ are in general (and, in particular, in good)  position. Then the Picard number $\rho(X)$ equals $\rho(X)=16$.
\end{prop}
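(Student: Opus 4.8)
The plan is to bound $\rho(X)$ from both sides: a lower bound $\rho(X)\ge 16$ from explicit algebraic cycles on $X$, and a matching upper bound $\rho(X)\le 16$ from the Hodge-theoretic period description recalled in the introduction. First I would fix the geometry. Since the six lines are in general position, the sextic $B=\ell_1\cup\cdots\cup\ell_6$ has exactly $\binom{6}{2}=15$ ordinary nodes, sitting at the $15$ distinct points $P_{ij}=\ell_i\cap\ell_j$. Hence the double cover of $\bP^2$ branched along $B$ acquires $15$ rational double points of type $A_1$, one over each $P_{ij}$, and its minimal resolution $X$ carries $15$ pairwise disjoint exceptional $(-2)$-curves $E_{ij}$, $1\le i<j\le 6$ (the curves in Fig.~\ref{fig:GoodPosition}). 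I write $\pi_X\colon X\to\bP^2$ for the resulting degree two morphism and $h:=\pi_X^\ast(\text{class of a line})$, so that $h^2=2$.

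For the lower bound I would record the intersection numbers of the $16$ classes $h, E_{ij}$. The $E_{ij}$ are contracted by $\pi_X$, so the projection formula gives $h\cdot E_{ij}=0$; the nodes are pairwise distinct, so $E_{ij}\cdot E_{kl}=0$ for $\{i,j\}\neq\{k,l\}$, while $E_{ij}^2=-2$. The Gram matrix of $\{h,E_{ij}\}$ is therefore $\qf{2}\perp\qf{-2}^{15}$, which is nondegenerate, so these $16$ classes are linearly independent in $\NS(X)$ and $\rho(X)\ge 16$. I would note that the strict transforms $\tilde L_i$ of the lines do not raise the rank: as $\ell_i$ passes through the five nodes $P_{ij}$ ($j\neq i$) and lies in the branch locus, one has $\pi_X^\ast\ell_i=2\tilde L_i+\sum_{j\neq i}E_{ij}$, while $\pi_X^\ast\ell_i=h$, whence $2\tilde L_i=h-\sum_{j\neq i}E_{ij}$ already lies in the $\Q$-span of $h$ and the $E_{ij}$. (A quick check gives $\tilde L_i^2=-2$ and $\tilde L_i\cdot E_{ij}=\tilde L_i\cdot h=1$, confirming the relation; these half-integral classes are exactly what enlarge $\qf{2}\perp\qf{-2}^{15}$ to the finer lattice $U\perp D_6^2\perp\qf{-2}^2$ of Theorem~\ref{MainTHGenericCase}, but they are irrelevant for the rank.)

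For the upper bound, equivalently $\operatorname{rank}T(X)\ge 6$, I would invoke the period description. The configuration space of six lines modulo $\gl{3;\C}$ is $\mathbf{X}$, of dimension $2(6-4)=4$, and by the biholomorphism $\mathbf{X}/H\cong\mathbf{M}$ recalled in the introduction the transcendental Hodge structures of these surfaces are parametrized by the $4$-dimensional type IV domain $\mathbf{D}_4$, whose underlying lattice $T=U^2\perp\qf{-1}^2$ has signature $(2,4)$ and rank $6$. Thus for generic lines the line $H^{2,0}(X)$ genuinely varies in a $4$-parameter family, which forces $\operatorname{rank}T(X)\ge 6$ and hence $\rho(X)\le 22-6=16$. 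Combining the two bounds yields $\rho(X)=16$.

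The main obstacle is the upper bound: lower-bounding $\operatorname{rank}T(X)$, i.e. ruling out unexpected algebraic cycles on the generic member, cannot be done by algebraic geometry alone and rests on the transcendental input that the period map has $4$-dimensional image. One could instead try to argue by upper semicontinuity, since $\rho$ only jumps upward under specialization, so that it would suffice to exhibit a single configuration with $\operatorname{rank}T=6$; but verifying that again reduces to the same period computation, so the non-degeneracy of the period map is the genuine crux.
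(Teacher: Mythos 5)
Your proof is correct and takes essentially the same route as the paper's: the paper also gets $\rho\ge 16$ from the rank-$16$ sublattice spanned by the pullback of a line and the fifteen disjoint exceptional $(-2)$-curves $E_{ij}$, and then concludes $\rho=16$ by the moduli count — a family of lattice-polarized K3 surfaces with generic Picard number $\rho$ has $20-\rho$ moduli, while the configuration space of six lines is $4$-dimensional. Your explicit appeal to the period-map biholomorphism of [MSY] for the upper bound is just a more detailed citation of the same transcendental input that the paper compresses into the phrase ``as explained above, we have $20-\rho$ moduli.''
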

\proof The 15 ordinary double points $P_{ij}\, (1\leq i<j\leq 6)$  in the configuration give 15  $E_{ij}$ disjoint exceptional divisors  on $X$; these are $(-2)$-curves, i.e.\ rational curves with self intersection $(-2)$. The generic line gives one further divisor $\ell$ with $\ell^2=2$ and which is  orthogonal to the $E_{ij}$. The divisors $\set{\ell, E_{12},\dots, E_{56}}$ thus form a sublattice $N$ of rank $16$ within the N\'eron-Severi lattice $\NS(X)$.  
So for the Picard number we have  $\rho \ge 16$. As explained above, we have $20-\rho$ moduli where $\rho$ is the Picard number of a generic member of the family. So $\rho=16$ and   hence for generic choices of lines $\ell_i$ the lattice $\NS(X)$ contains  $N$ as a sublattice of finite index. 
\qed
\endproof

We need a simple consequence of the proof. To explain it we need a few notions from lattice theory.
Recall that the  dual of  a lattice $L$ is defined by
\[
L^*:= \sett{x\in L\otimes\Q}{ \qf{x,y}\in\Z,\quad \text{for all } y\in L}
\]
and that the  discriminant group  $\delta(L)$ is the finite Abelian group $L^*/L$.
We say that $L$ is \textbf{$p$--elementary}, if this is so for $\delta(L)$, i.e.
\[
L^*/ L \cong (\Z/p\Z)^\ell, \quad \ell:=\text{\bf  length of } L \le \rank(L).
\]
From the above proof we see that 
 \[
 \langle 2 \rangle \perp  \langle -2\rangle^{15}\subset\NS(X).
 \]
and so
\begin{corr}\label{NSis2elementary} The N\'eron-Severi lattice $\NS(X)$ is $2$--elementary.
\end{corr}
 In what follows we shall   first of all determine both the N\'eron-Severi and the transcendental lattice   for such a generic K3 surface   $X$. We shall prove:
 
 \begin{thm} \label{MainTHGenericCase} For generic $X$ as above we have $\NS(X) = U\perp D_6^2\perp A_1^2$ and $T(X) = U(2)^2 \perp A_1^2=T(2)$.
\end{thm}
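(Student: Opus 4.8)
The plan is to determine $\NS(X)$ as an abstract lattice and then obtain $T(X)$ as its orthogonal complement in $\Lambda=U^3\perp E_8^2$. By the previous Proposition $\NS(X)$ has signature $(1,15)$, and it is even and $2$--elementary by Corollary~\ref{NSis2elementary}; hence $\delta(\NS(X))\cong(\Z/2)^a$ and $\det\NS(X)=2^a$ for some $a$. By the classification of indefinite even $2$--elementary lattices (see \cite{fourauthors}) such a lattice is determined up to isometry by its signature together with the pair $(a,\delta)$, so the whole statement reduces to pinning down $a$ (the parity $\delta$ will be forced by the signature).

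First I would produce enough classes to bound $a$ from above. Write $h$ for the class of a line ($h^2=2$) and $E_{ij}$ for the fifteen disjoint $(-2)$--curves, so that $N:=\qf{2}\perp\qf{-2}^{15}\subseteq\NS(X)$ as in the Proposition. For $\pi\colon X\to\bP^2$ the covering map, the ramification curve $R_i$ over the branch line $\ell_i$ is a smooth rational curve satisfying $h=2R_i+\sum_{j\ne i}E_{ij}$, whence $R_i=\half\bigl(h-\sum_{j\ne i}E_{ij}\bigr)$, with $R_i^2=-2$, $R_i\cdot R_k=0$ for $i\ne k$, $R_i\cdot h=1$, and $R_i\cdot E_{kl}=1$ exactly when $i\in\{k,l\}$. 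Adjoining the $R_i$ to $N$ gives an even overlattice $N'\subseteq\NS(X)$; the classes $R_1,\dots,R_6$ satisfy the single relation $\sum_iR_i\in N$ and are otherwise independent modulo $N$, so $N'/N\cong(\Z/2)^5$ and $\det N'=2^{16}/2^{10}=2^6$. Thus $a\le 6$. Since every integral class in $N'\otimes\Q$ is automatically a Hodge class, the generic member satisfies $\NS(X)=\overline{N'}$, the saturation of $N'$ in $\Lambda$.

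It remains to show $a=6$, i.e. that $N'$ is already saturated, and for this I would use the pencil of lines through the node $P_{12}$. A general such line $L$ meets the branch sextic only in the four points $L\cap\ell_k$, $k=3,\dots,6$ (the two intersections with $\ell_1,\ell_2$ coalesce at $P_{12}$ to a non--branch point), so its double cover has genus one and we obtain a genus--one fibration $X\to\bP^1$. Its reducible fibres are two of type $I_0^*$ over $\ell_1,\ell_2$, of shape $2R_i+\sum_{k\ge3}E_{ik}$ and contributing two copies of $D_4$, and six of type $I_2$ over the lines joining $P_{12}$ to the nodes $P_{kl}$ with $k,l\in\{3,4,5,6\}$, contributing $A_1^6$ (Euler numbers $2\cdot6+6\cdot2=24$). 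Passing to the associated Jacobian elliptic surface, which shares the transcendental lattice, the trivial lattice is $U\perp D_4^2\perp A_1^6$ of determinant $2^{10}$, the Mordell--Weil rank is $16-16=0$, and the torsion is the full $2$--torsion $(\Z/2)^2$ of the generic fibre (its four branch points being globally labelled by $\ell_3,\dots,\ell_6$, all three $2$--division classes are sections, and every component group here has exponent $2$). Hence $\det\NS(X)=2^{10}/2^4=2^6$, so $a=6$ and $N'=\NS(X)$. The signature $(1,15)$ forces $\delta=1$ (an even discriminant form on $(\Z/2)^6$ has signature $\equiv0,4\pmod8$, whereas ours must be $\equiv2$), and a check of invariants shows $U\perp D_6^2\perp A_1^2$ has type $(16,6,1)$; by uniqueness $\NS(X)=U\perp D_6^2\perp A_1^2$. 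Finally $T(X)=\NS(X)^\perp$ in the unimodular lattice $\Lambda$ is even $2$--elementary of signature $(2,4)$ with $q_{T(X)}=-q_{\NS(X)}$, so it again has $(a,\delta)=(6,1)$; the unique such lattice is $U(2)^2\perp A_1^2$, which is precisely $T(2)$.

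The step I expect to be the main obstacle is the passage from $a\le6$ to $a=6$, i.e. the saturation $N'=\NS(X)$. The sublattice $\qf{2}\perp\qf{-2}^{15}$ is immediate and only yields $\rho\ge16$; the real content of the theorem is exactly how far this lattice is from being primitive, and abstractly $N'$ could still admit even overlattices along isotropic vectors of its discriminant form. The elliptic pencil resolves this by converting the question into the computation that the Mordell--Weil torsion is precisely $(\Z/2)^2$ --- and no larger --- which is what forces $\det\NS(X)=2^6$.
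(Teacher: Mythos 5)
Your proof is correct, and its engine coincides with the paper's: the pencil of lines through $P_{12}$ is precisely the paper's standard fibration of \ref{ss:2D4} (your $R_i$ are the curves denoted $\ell_i$ on $X$ there), and both arguments extract from it the trivial lattice $U\perp D_4^2\perp A_1^6$, Mordell--Weil rank $0$ and torsion $(\Z/2\Z)^2$, hence $\mathrm{discr}\,\NS(X)=-2^6$, and both finish with the classification of indefinite $2$-elementary lattices (Prop.~\ref{nikulin}; note the paper cites Nikulin rather than B-H-P-V for this). The genuine divergence is how the last invariant, the integrality of the discriminant form, gets pinned down. The paper does it geometrically: it constructs a second, alternative fibration (\ref{ss:2D6}) with two $I_2^*$ fibers, exhibiting $U\perp D_6^2\perp A_1^2$ inside $\NS(X)$ with the correct rank and discriminant, so that equality holds and the orthogonal $A_1$ summand shows $q_{\NS(X)}$ takes non-integer values. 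You do it arithmetically: the signature of $\NS(X)$ is $1-15\equiv 2\bmod 8$, while an integer-valued $2$-elementary discriminant form on $(\Z/2\Z)^6$ has signature $\equiv 0$ or $4 \bmod 8$, forcing $\delta=1$; uniqueness then identifies both $\NS(X)$ and $T(X)$ abstractly. Your route is shorter and needs only one fibration; the paper's pays for the second fibration but buys explicit geometric generators and, more importantly, the alternative fibration itself, which is reused throughout \S\ref{ssec:LattEnh} and \S\ref{ssec:SpecLines} to analyse the divisors $D_\Delta$ via lattice enhancements. Your overlattice $N'$ built from the ramification classes $R_i$ is geometrically illuminating but logically redundant: the fibration computation alone already gives $a=6$.

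One caution: the detour through ``the associated Jacobian elliptic surface'' should be removed. Your fibration already has sections --- each $R_k$, $k=3,\dots,6$, meets the general fiber exactly once --- so it is its own Jacobian; and the justification you offer, that a genus-one fibration shares its transcendental lattice with its Jacobian, is not valid integrally in general (the discriminants can differ by a square of the multisection index), so it is not a safe step to invoke even parenthetically.
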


This  gives  an interpretation of   the  previous results  in terms of the moduli of K3 surfaces. Indeed we have  $D(T(2))=\mathbf{D}_4$ and we note that   $T(X)=T(2)$ and $T$ have the same orthogonal group. So   the above result shows that our moduli space equals 
\[
D^0(T) / \so{T}{*}= \mathbf{D}_4^0 /\so{T}{*}.
\]
Moreover, by the results of \cite{MSY} we can now identify  this moduli space with   the configuration spaces from \S~\ref{subsec:6Lines}.
\begin{thm}[\protect{\cite[Prop. 2.10.1]{MSY}}] \label{MSYTheorem}
There is a commutative diagram
\[
\begin{diagram}
\mathbf{X} /\set{*}=\mathbf{Y}    & \rarrow{\tilde p}{\simeq}{3em}&  \mathbf{D}_4^0/ \so{T}{*}(2) \subset \mathbf{M}^*  \\
\darrow{}{\pi}{3ex} &&  \darrow{}{}{3ex} \\
\mathbf{X} /\left[\set{*}\times \germ S_6\right]=\mathbf{Y}  /\germ S_6 &  \rarrow{\simeq}{p}{3em} &  \mathbf{D}_4^0 /\so{T}{*}\subset \mathbf{M}.
\end{diagram}
\]
The holomorphic maps $\tilde p$ and $p$ are biholomorphisms. They are the period maps.
\end{thm}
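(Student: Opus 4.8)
The plan is to realize $\tilde p$ and $p$ as the period maps of the K3 double covers and then to invoke the Torelli theorem. First I would attach to a configuration of six ordered lines in good position the minimal resolution $X$ of the double plane \eqref{eq:double6}. By the Proposition above it carries the rank-$16$ lattice $N=\qf{2}\perp\qf{-2}^{15}\subseteq\NS(X)$, which by Theorem~\ref{MainTHGenericCase} is generically an overlattice with $\NS(X)=U\perp D_6^2\perp A_1^2$ and $T(X)=T(2)$; since $D(T(2))=\mathbf{D}_4$, the period point lies in $\mathbf{D}_4$, and because the six lines are in good position $X$ acquires no extra $(-2)$-class, so the period avoids every root hyperplane and in fact lands in $\mathbf{D}_4^0$. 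The ordering of the lines labels the fifteen nodes $P_{ij}$, hence the exceptional $(-2)$-curves $E_{ij}$ and the polarization class $\ell$; this marking of $N\subseteq\NS(X)$ determines a level-$2$ structure on $T(2)$, i.e. a canonical lift of the period to $\mathbf{M}^{*}=\mathbf{D}_4/\so{T}{*}(2)$.

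Next I would verify that this map descends to $\mathbf{Y}=\mathbf{X}/\set{*}$ and is $\germ S_6$-equivariant. By Prop.~\ref{Cremona} the correlation involution $*$ yields an isomorphic K3 surface, and on the marking it induces precisely the involution $\tilde\tau$ of \eqref{eqn:TauTilde}; since $\tilde\tau\in\so{T}{*}(2)$ by construction, it acts trivially on $\mathbf{M}^{*}$, so $\tilde p$ factors through $\mathbf{Y}$. Permuting the six lines alters the marking by the corresponding element of $\so{T}{*}/\so{T}{*}(2)$, and under the exceptional isomorphism $\germ S_6\cong\smpl{4;\bF_2}$ recorded in \eqref{eqn:CongruIdent2} this is exactly the deck action of the Galois cover $\mathbf{M}^{*}\to\mathbf{M}$. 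This makes the square commute and produces $p$; holomorphy of both maps is the standard holomorphic dependence of the periods on the coefficients of the $\ell_i$.

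Finally I would promote the holomorphic maps to biholomorphisms. Local Torelli makes $\tilde p$ a local isomorphism, and since $\dim\mathbf{X}=2(6-4)=4=\dim\mathbf{D}_4$ it is open. Injectivity follows from the global Torelli theorem \cite[VIII]{fourauthors}: a Hodge isometry of $T(2)$ compatible with the marking extends to an isometry of $\Lambda$ realized by an isomorphism of the marked surfaces, whence the labelled six-line configurations agree up to $*$. For surjectivity onto $\mathbf{D}_4^0/\so{T}{*}(2)$ I would combine the surjectivity of the period map for $N$-polarized K3 surfaces with the reverse construction: the class $\ell$ of square $2$ in $N$ defines a degree-two map to $\bP^2$ whose branch sextic, by the configuration of the fifteen $(-2)$-classes, splits into six lines in good position. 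Openness, injectivity and surjectivity together make $\tilde p$ a biholomorphism, and dividing by $\germ S_6\cong\smpl{4;\bF_2}$ gives the same for $p$.

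The hard part is the precise dictionary between the combinatorics of the six ordered lines and the lattice-theoretic level-$2$ data: one must check that the $\germ S_6$-action on orderings corresponds equivariantly to the $\smpl{4;\bF_2}$-action on level structures — so that the exceptional isomorphism is realized geometrically through the fifteen $(-2)$-curves — and that the correlation involution really lands in $\so{T}{*}(2)$ rather than merely in $\so{T}{*}$. The surjectivity step is equally delicate, since it requires reconstructing six good-position lines from an abstract $N$-polarized K3 and confirming that the points of $\mathbf{D}_4^0$, and not those on the root hyperplanes, are exactly the ones parametrizing honest configurations.
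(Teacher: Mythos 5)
The paper itself contains no proof of Theorem~\ref{MSYTheorem}: it is imported wholesale from \cite[Prop.~2.10.1]{MSY}, where it is established by explicitly computing the periods of the four-parameter family (via the hypergeometric system of type $(3,6)$ and theta constants) and determining the monodromy group. So your Torelli-theoretic plan is necessarily a different route; it is the natural alternative, and its skeleton — local Torelli plus the dimension count $\dim\mathbf{X}=4=\dim\mathbf{D}_4$ for openness, global Torelli for injectivity, surjectivity of the period map for lattice-polarized K3 surfaces plus reconstruction of the double plane for surjectivity, and congruence-group bookkeeping for the level structure — is the correct one, and it has the virtue of meshing with Theorem~\ref{MainTHGenericCase} proved in this paper rather than with MSY's explicit integrals.

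However, there are genuine gaps, concentrated exactly at the places you label ``the hard part,'' and one step is wrong as stated. First, ``because the six lines are in good position $X$ acquires no extra $(-2)$-class'' is false: on $XQ$ (lines tangent to a conic) and on $\mathbf{X}_{\rm coll}$ the configuration is still in good position, yet the surface does acquire extra algebraic classes — of square $-4$ in $T(2)$, not $-2$. What must be shown is that no \emph{root} of $T(2)$ becomes algebraic for a good-position configuration; this needs an argument, e.g.\ if $\alpha\in T(2)$ with $\qf{\alpha,\alpha}=-2$ were algebraic, then $\pm\alpha$ is effective by Riemann--Roch and orthogonal to the nef class $\ell$, hence supported on curves contracted by the degree-two map to $\bP^2$, i.e.\ on the $E_{ij}$; but their classes lie in $\NS(X)$ and $\alpha\perp\NS(X)$, forcing $\qf{\alpha,\alpha}=0$, a contradiction. (Equivalently, the root hyperplanes constitute the $\Delta=2$ locus, which the paper identifies with the triple-point configurations $X_3$ excluded from good position; cf.~\ref{sss:3.6.1}.) Second, for injectivity, global Torelli yields an isomorphism of the two surfaces, but nothing forces it to respect the quasi-polarization: the extension of the Hodge isometry from $T(2)$ to $\Lambda$ is not unique, and after composing with Weyl reflections and $\pm1$ to make it effective, $\ell$ may go to a different nef class of square $2$. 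The crux, which your phrase ``agree up to $*$'' assumes rather than proves, is that the only other class that can occur is the Cremona class $2\ell-e_{14}-e_{25}-e_{36}$ up to relabelling, so that the configurations differ exactly by $\gl{3;\C}$, a permutation, and the correlation involution of Prop.~\ref{Cremona}/Remark~\ref{LinesAndCremona}. (Note also that by Remark~\ref{LinesAndCremona} the involution $*$ corresponds to $j\colon g_5\mapsto -g_6$, not literally to $\tilde\tau$; this is harmless since $j$ and $\tilde\tau$ differ by an element of $\so{T}{+}(2)$, so the factorization through $\mathbf{Y}$ survives.) Third, surjectivity needs both the Weyl-chamber adjustment making $\ell$ nef and the fact that a plane sextic with $15$ nodes splits into six lines in good position (a reducible sextic with components of degrees $d_1,\dots,d_k$ has at most $9+k\le 15$ nodes, with equality only for $k=6$). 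Finally, the equivariance of the period map with respect to the exceptional isomorphism $\germ S_6\simeq\smpl{4;\bF_2}$ of \eqref{eqn:CongruIdent2} — which is what makes the square commute — is precisely what MSY's monodromy computation delivers; in your approach it still must be verified geometrically, say on generators of $\germ S_6$, and until these points are filled in the proposal is a program rather than a proof.
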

In other words, the moduli space  of $T$--lattice polarized K3's can be identified with  the quotient of the configuration space of unordered 6-tuples of lines in $\bP^2$ by the correlation involution $*$. Note also that the group $\germ S_6$ on the left is indeed isomorphic to the quotient
$\so{T}{*}/\so{T}{*}(2)=\smpl{4;\bF_2}$ (see \eqref{eqn:CongruIdent}).

\begin{rmk} \label{LinesAndCremona} Recall that $*$ sends the corresponding double covering K3 surface  $X$ to a K3 surface $*X$. The involution $*$ sends the 6 branch lines defining $X$ to the three branch lines defining $*X$. 
By Prop.~\ref{Cremona} there is a standard  Cremona transformation with fundamental points  the three points $P_{14}, P_{25}, P_{36}$ which induces this involution on the level of the plane and hence $X$ and $*X$ are isomorphic K3-surfaces, the isomorphism being induced by the Cremona transformation. It follows that the quasi-polarization (given by the class of a line $\ell$ on the plane) is not preserved under this isomorphism: it is sent to $2 \ell-e_{14}-e_{25}-e_{36}$ where $e_{ij}$ is the class of the exceptional curve $E_{ij}$.

 By \cite[\S 1.4.]{Mats} the involution $*$ 
corresponds to the  involution $j$ which on $T=U^2 \perp \qf{-1}^2$ fixes the first 4 basis vectors and sends the fifth to minus the sixth. 
Since $T$ as well as its orthogonal complement $S=T^\perp$ is $2$-elementary, by \cite[Theorem 3.6.2]{Nik1} the restriction $\ogr{S}{} \to \ogr{q_S}{}$ is surjective. Any lift of the image of $j$ under the homomorphism $\ogr{T}{} \to \ogr{q_T}{} $  to $S$  then can be glued together with $j$  to obtain an isometry of  the K3-lattice $\Lambda$. Such an isometry sends the period of $X$ to the period of an isomorphic K3-surface which must be $*X$ by the Torelli theorem.  
\end{rmk}

Next,  we  study what happens when the line configuraton degenerates. 
\begin{thm} \label{MainTHDegenerations}  Put
\[
X_\Delta: \, \text{the generic K3 surface  on }D_\Delta \subset \mathbf{M} 
\]
We have
\begin{enumerate}
\item   $\NS(X_2)=U \perp D_4^2\perp E_7$, $T(X_2)=U(2)^2\perp A_1$;
\item   $\NS(X_4)=U\perp D_6^2\perp A_3$, $T(X_4)=U(2)\perp\langle 4\rangle\perp A_1^2$.
\item   $\NS(X_1)=U\perp D_4\perp D_8 \perp A_3$, $T(X_1)=U(2)^2\perp \langle-4\rangle$.
\item   $\NS(X_6)=U\perp D_6^2\perp A_1 \perp A_2$, $T(X_6)=U(2)\perp A_1^2\perp \langle 6\rangle$.
\end{enumerate}
\end{thm}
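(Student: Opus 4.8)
The plan is to handle all four values $\Delta\in\{1,2,4,6\}$ by the same two-step scheme: extract $\NS(X_\Delta)$ from the geometry of a well-chosen elliptic pencil on the degenerate surface, and then obtain $T(X_\Delta)$ as an orthogonal complement, reconciling the two computations through their discriminant forms. Throughout I would start from the generic surface of Theorem~\ref{MainTHGenericCase}, where $\NS(X)=U\perp D_6^2\perp\qf{-2}^2$ and $T(X)=T(2)$, together with the explicit generators $\ell,E_{ij}$ furnished by the double-plane description in \S~\ref{ss:intro}, and track how these classes behave as the six lines move onto the special divisor.

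For the N\'eron--Severi side I would use the dictionary of \S~\ref{subsec:6Lines} identifying each $D_\Delta$ with a concrete line configuration: six lines tangent to a conic for $\Delta=1$, three concurrent lines (the triple-point locus) for $\Delta=2$, three collinear node-pairs for $\Delta=4$, and the corresponding configuration for $\Delta=6$. Each such degeneration forces a prescribed worsening of singularities on the double plane---an ordinary triple point of the branch sextic, for instance, resolves to a $D_4$ configuration---or, equivalently, makes two singular fibres of a suitably adapted elliptic pencil collide. Reading off the new root system via the Shioda--Tate formula (fibre plus section supplying the summand $U$, reducible fibres giving the root part, Mordell--Weil rank staying $0$ since each frame has rank $15$) produces the root lattices $D_4\perp D_8\perp A_3$, $D_4^2\perp E_7$, $D_6^2\perp A_3$ and $D_6^2\perp A_1\perp A_2$ claimed in the theorem. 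The $2$-elementary constraint of Cor.~\ref{NSis2elementary} pins down the $2$-part for $\Delta=1,2,4$, while for $\Delta=6$ the extra $A_2$ accounts for the $3$-torsion in the discriminant.

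For the transcendental side I would argue purely lattice-theoretically. On $D_\Delta$ the period is orthogonal to the divisor class $\mathbf{y}$, so $T(X_\Delta)$ is the saturation of $\mathbf{y}^\perp$ inside $T(X)=T(2)$; using the explicit orbit representatives of Cor.~\ref{OneOrbit} (e.g.\ $(2,0,0,0,1,1)$ for $\Delta=1$) this is a short finite computation, whose rational shape is governed by \eqref{eqn:OverQ}, namely $\mathbf{x}^\perp\sim_\Q U\perp\qf{-2}^2\perp\qf{2\Delta}$. To pass from genus to isometry class I would invoke Nikulin's uniqueness theorem, applicable because each $T(X_\Delta)$ is indefinite of rank $5$ with small discriminant. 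As a built-in consistency check I would verify that $\lvert\mathrm{disc}\,\NS(X_\Delta)\rvert=\lvert\mathrm{disc}\,T(X_\Delta)\rvert$---equal to $64,32,64,96$ for $\Delta=1,2,4,6$---and that the two discriminant forms are anti-isometric, which must hold since $\NS(X_\Delta)$ and $T(X_\Delta)$ are complementary primitive sublattices of the unimodular K3 lattice $\Lambda$; Nikulin's gluing theory then guarantees that the frame lattice read off from the pencil is the full $\NS(X_\Delta)$, with no further overlattice or Mordell--Weil torsion.

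The main obstacle, I expect, lies in the second step: reliably pinning down the precise Kodaira fibre types after degeneration. A bare count of components cannot by itself decide whether two singular fibres merely collide (yielding a direct sum such as $A_1\perp A_1$) or genuinely fuse into one larger fibre (yielding $A_3$, $D_8$ or $E_7$), nor whether the new algebraic class on $D_\Delta$ surfaces as an extra fibre component or as a Mordell--Weil section. Resolving this demands an honest local analysis of the degenerating pencil, and it is exactly here that the special geometry of each configuration---the tangency to the conic, the triple point, the collinearity---must be used to force the correct enhancement.
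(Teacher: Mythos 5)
Your framework---elliptic fibrations for $\NS$, orthogonal complements inside $T(2)$ for $T$, Nikulin's theory to reconcile the two---is indeed the paper's framework, and your target root lattices and discriminants ($64,32,64,96$) are correct; but the proposal defers exactly the step that constitutes the proof, and the mechanisms you offer for that step would fail. Your uniform mechanism ``the configuration degenerates, hence the branch sextic acquires worse singularities or two fibres collide; Shioda--Tate with trivial Mordell--Weil group then gives $\NS$'' breaks down for $\Delta=1$: six lines tangent to a conic still form a purely nodal sextic, and the extra algebraic class is $H-C_2$ (of square $-4$), coming from the conic splitting as $C_1+C_2$ on $X$. On both fibrations the paper uses (\ref{ss:2D4} and \ref{ss:2D6}) this class materializes as a Mordell--Weil \emph{section} $P$ of height $1$, not as a fibre degeneration; establishing the existence of $P$ and the exact fibre components it meets is the bulk of the paper's argument (\ref{sss:alt-st}--\ref{sss:eqn-6}: a parity/backwards-engineering argument transferring $P$ between the two fibrations, then explicit Weierstrass equations), and only afterwards, by passing to a third, purpose-built fibration (Figure~\ref{Fig:2}), does $\NS(X_1)$ take the stated form $U\perp D_4\perp D_8\perp A_3$. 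For $\Delta=6$ your scheme cannot even start: \S~\ref{subsec:6Lines} supplies configurations only for $D_2$ and $D_4$ (the identification $D_1=XQ$ needs Matsumoto), and no configuration for $D_6$ is known in advance; the paper argues in the direction opposite to yours, first showing by a height bound that the enhancing class cannot be a section, so that it forces $A_1\rightsquigarrow A_2$ on the alternative fibration, and only then deriving the special plane curve (a nodal cubic through six of the nodes, tangent to $\ell_5$).

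The lattice-theoretic safeguards you invoke also do not do what you need. Trivial Mordell--Weil \emph{rank} does not give $\NS=U\perp(\text{root part})$: torsion sections create a finite-index overlattice of the frame, and this actually happens here---for the generic $X$ the frame $U\perp D_4^2\perp A_1^6$ of the standard fibration has index $4$ in $\NS(X)$, and for $\Delta=2$ the degenerate frame $U\perp D_4^3\perp A_1^3$ again has index $4$, the missing generators being $2$-torsion sections which the paper must and does identify explicitly (e.g.\ the exceptional curve $D$ over the triple point becomes a $2$-torsion section of the alternative fibration). Since $q_{T}\cong -q_{\NS}$ is automatic for primitive orthogonal complements in $\Lambda$, your consistency check can detect such an error but cannot repair it, and it certainly cannot ``guarantee no further overlattice''. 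Finally, the appeal to Nikulin uniqueness ``because each $T(X_\Delta)$ is indefinite of rank $5$ with small discriminant'' is unfounded: all four lattices $T(X_\Delta)$ have length equal to their rank $5$, so the usual uniqueness-in-genus criteria do not apply, and among the lattices in the theorem only those for $\Delta=2$ are $2$-elementary---the single case where the paper's Prop.~\ref{nikulin} carries the argument. For $\Delta=1,4,6$ the paper needs no genus argument at all: $T(X_\Delta)$ is computed exactly as $v^\perp\subset T(2)$ for an explicit enhancement vector $v$; but then the entire difficulty becomes deciding which of the two inequivalent $(-4)$-vectors $v_1,v_2$ of Example~\ref{warning} the tangent-conic, respectively collinear-nodes, geometry realizes, and that is settled only by the discriminant-form and fibration comparison of \ref{ss:compare}--\ref{ss:connect}---precisely the ``honest local analysis'' your final paragraph acknowledges but leaves open.
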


 To prove this  we will make substantial use of elliptic fibrations.
 We should point out that most, if not all computations can be carried out with explicit divisor classes on the K3 surfaces;
 elliptic fibrations have the advantage of easing the lattice computations
 as well as providing geometric insights,
 since the root lattices in the above decomposition of $\NS$
 appear naturally as singular fibers of the fibration
 (conf.~for instance \cite{CD}, \cite{Nishi}).
 
 After reviewing the basics on elliptic fibrations needed,
 we will first prove Theorem \ref{MainTHGenericCase} in \ref{ss:2D6}.
 Then using lattice enhancements the three cases of Theorem \ref{MainTHDegenerations}
 will be covered in \ref{sss:3.6.1}, \ref{ss:4} and \ref{sss:3.6.9}.

 \subsection{Elliptic fibrations and the Mordell-Weil lattice}
 \label{ss:ell}

We
start by reviewing basic facts on elliptic fibrations, and in particular on the N\'eron-Severi lattice for an elliptic fibration with section following Shioda as summarized in \cite{SS}.

Let $S\to C$ be an elliptic fibration of a surface $S$, with a section $s$ and general fiber $f$. These two span a rank  $2$ sublattice $U$ of the N\'eron-Severi lattice $\NS=(\NS(S),\qf{\,,\,})$,  isomorphic to the hyperbolic plane.
We call $s$ the \emph{zero section}; it meets every singular fiber in a point which  figures  as the neutral element in a group $G_\nu$ whose structure is given in the table below.

\vspace{1ex}
 \begin{center}
\begin{tabular}{|c||c|c|c|c|c|}
\hline
Fiber type & $F_\nu$  & $e_\nu$  & $G_\nu$   & discr$(F_\nu)$  &discr. gr. \\
\hline\hline
$I_n$ & $A_{n-1}$  & $n$  &$\C^*\times\Z/n\Z$  & $ (-1)^n(n+1)$ &$\Z/n\Z$ \\
\hline
$II $ &  --  &$1$ & $\C^*$&  $1$& $\set{1}$     \\
\hline
$III $ & $A_1$ & $1$  &$\C\times \Z/2\Z$ &  $-2$& $\Z/2\Z$    \\
\hline
$IV$&  $A_2$    & $2$ &  $\C\times\Z/3\Z$ & $-3$ &  $\Z/3\Z$\\
\hline
$I^*_{2n}$&   $D_{2n+4}$ &$2n+5$   &  $\C\times(\Z/2\Z)^2$  &$4$ &$(\Z/2\Z)^2$\\
\hline
$I^*_{2n+1}$&   $D_{2n+5}$ &$2n+6$   &  $\C\times\Z/4\Z $ &$-4$ &$\Z/4\Z$\\
\hline
$II^*$ & $E_8$& $9$ &$\C$& $1$ & $\set{1}$\\
\hline
$III^*$&$E_7$& $8$ &$\C\times\Z/2\Z$& $-2$ & $\Z/2\Z$\\
\hline
$IV^*$&$E_6$&$7$&$\C\times\Z/3\Z$& $3$& $\Z/3\Z$\\
\hline
\end{tabular}
\end{center}
 In the table we   enumerate Kodaira's list of singular fibers. The components  of a singular  fiber $f_\nu$ not met by the zero section 
 define mutually orthogonal negative-definite sublattices $F_\nu$ of the N\'eron-Severi lattice, all orthogonal to 
 the hyperbolic plane $U$. The   Euler number of the fiber $f_\nu$  is abbreviated by $e_\nu$ in the table. 
 The last two entries are the discriminant  and the discriminant group  of the lattice  $F_\nu$.
 
 The  lattice
\[
T:= U\perp \bigoplus_\nu F_\nu
\]
is called the \textbf{trivial lattice} of the elliptic surface $X$. It is a sublattice of $\NS$, but \emph{not necessarily primitive}.  Its orthogonal complement (inside $\NS(X)$)
\[
L:= T^\perp_{\NS}
\]
is called the \textbf{essential lattice}. The group of sections forms the \textbf{Mordell Weil group} $E$. Its torsion part can be calculated as follows:
\[
T':=\text{primitive   closure of $T$ in } \NS; \quad T'/T \simeq E_{\rm tors}.
\]
It is one of the main  results of the theory of elliptic surfaces
that
\begin{eqnarray}
\label{eq:E}
E \cong \NS/T.
\end{eqnarray}
The most famous incarnation of this fact is often referred to as Shioda-Tate formula:
\begin{eqnarray}
\label{eq:ST}
\text{rank}(\NS) = 2 + \sum_\nu \text{rank}( T_\nu) + \text{rank}(E).
\end{eqnarray}
The main idea now is to endow $E/E_{\rm tors}$ with the structure of a positive definite lattice,
the \textbf{Mordell-Weil lattice} $\MWL=\MWL(S)$.
This can be achieved as follows.
Since $L_\Q\perp T_\Q=\NS_\Q$, the restriction to $E$
\[
\pi_E: {\NS}_{\Q}|_E \to L_\Q
\]
 of the orthogonal projection  is well-defined with kernel $E_{\rm tors}$. 
 The \textbf{height pairing} 
 on $\MWL= E/E_{\rm tors}$
 by definition is induced from the pairing on the N\'eron-Severi group:
\[
\qf{P,Q}:= -\qf{\pi_E(P),\pi_E(Q)}, \quad P, Q\in E.
\]
Note that  by definition, this pairing need not be integral.
Shioda has shown that the height pairing  can be calculated directly from the way the sections $P, Q$ meet each other,
the zero section, and in particular the singular fibers $f_\nu$.
In the sequel we will need this only for the height $\qf{P,P}$ of an individual section $P\in E$.
The only components of $f_\nu$ possibly met by a section are 
the multiplicity $1$ components not met by the zero section. 
For $I_n$,   this gives $n-1$ components that one enumerates successively, starting from the first component next to the one meeting the zero section (upto changing the orientation). For $I_n^*$,  $n>0$, there are $3$ components: the \emph{near} one (the first component next to the one meeting the zero section) and two far ones 
(for $I_0^*$ fibers the three simple non-identity components are indistinguishable). In the end, the height formula reads
\begin{equation}\label{eqn:HP}
h(P) := \qf{P,P}= 2\chi(\mathcal O_S) + 2\qf{P,s}-\sum_\nu c_\nu,
\end{equation}
where  the  local contribution $c_\nu$  for $f_\nu$ can be found in  the following table  and is determined by the component which the section  $P$ meets (numbered $i=0,1,\hdots,n-1$ as above for fibers of type $I_n$): 
\vspace{1ex}
 \begin{center} 
\begin{tabular}{|c||c|c|}
\hline
Fiber type & root lattice & $c_\nu$ \\
\hline\hline
$I_n\; (n>1)$ & $A_{n-1}$  & $\frac{i(n-i)}{n}$\\ 
\hline
$III $ & $A_1$ &    $ \half$    \\
\hline
$IV$&  $A_2$    & $ {2\over 3}$\\
\hline
$I^*_{n} \;(n\geq 0)$&   $D_{n+4}$ &$1$  (near), $1+{n\over 4}$ (far) \\
\hline
$II^*$ & $E_8$& $-$\\
\hline
$III^*$&$E_7$& ${3\over 2}$\\
\hline
$IV^*$&$E_6$&${4\over 3}$ \\
\hline
\end{tabular}
\end{center}

The following formula for the discriminant of $\NS=\NS(S)$, the N\'eron-Severi lattice can be shown to follow from the above observations:
\begin{equation}\label{eqn:discrNS}
\text{discr}(N)=\frac{(-1)^{\text{rank} E}}{|E_{\rm tors}|^2} \, \text{discr}(T)\cdot\text{discr}(\MWL).
\end{equation}

 \subsection{Generic $\NS(X)$ and the Proof of Theorem~\ref{MainTHGenericCase}}
 
 For later use, we start by computing the N\'eron-Severi lattice $\NS(X)$ and the transcendental lattice $T(X)$
 with the help of elliptic fibrations with a section, the so-called \textbf{jacobian elliptic fibrations}.
 It is a special feature of K3 surfaces that they may admit several jacobian elliptic fibrations.
For instance, we can multiply any three linear forms from to the RHS to the  LHS 
of \eqref{eq:double6} such as
\begin{eqnarray}
\label{eq:pencil}
X:\;\;\; \ell_1\cdots\ell_3 w^2 = \ell_4\cdots\ell_5.
\end{eqnarray}
Here a fibration is simply given by projection onto $\PP^1_w$;
it is the quadratic base change $v=w^2$ of a cubic pencil
with 9 base points $P_{14}, \hdots, P_{36}$ as sections.
However, this plentitude of sections (forming a Mordell-Weil lattice of rank 4)
makes the lattice computations quite complicated, so we will rather work with two other elliptic fibrations on $X$.

Note that due to the freedom in arranging the lines in \eqref{eq:pencil},
the K3 surface  $X$ admits indeed several different fibrations of the above shape.
This ambiguity will persist for all elliptic fibrations throughout this note.

\subsubsection{Standard elliptic fibration}
\label{ss:2D4}

We shall now derive an elliptic fibration on $X$ which will serve as our main object in the following.
For this purpose we specify the elliptic parameter $u$ giving the fibration by
\[
u = \ell_1/\ell_2.
\]
One easily computes the divisor of $u$ as
\[
(u) = 2 \ell_1+E_{13}+\hdots+E_{16}- 2 \ell_2-(E_{23}+\hdots+E_{26}).
\]
Both zero and pole divisor encode divisors of Kodaira type $I_0^*$,
hence the morphism
\[
u: X \to \PP^1
\]
defines an elliptic fibration on $X$ with sections $\ell_3,\hdots,\ell_6$.
Note that the exceptional divisors $E_{ij} (3\leq i<j\leq 6)$ are orthogonal to both fibers;
hence they comprise components of other fibers. 
We sketch some of these curves in the following figure:

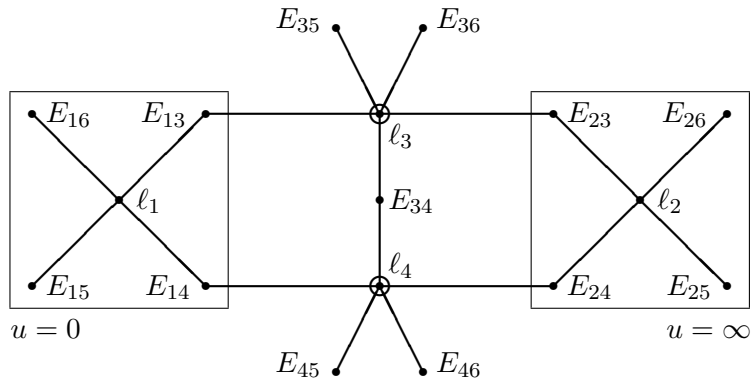
\begin{figure}[ht!]
\setlength{\unitlength}{.45in}
\begin{picture}(8,5)(-1.5,0)
\thicklines

\multiput(0,1)(2,0){5}{\circle*{.1}}

\multiput(0,3)(2,0){5}{\circle*{.1}}

\multiput(1,2)(3,0){3}{\circle*{.1}}

\multiput(3.5,0)(1,0){2}{\circle*{.1}}
\multiput(3.5,4)(1,0){2}{\circle*{.1}}

\put(0,1){\line(1,1){2}}
\put(6,1){\line(1,1){2}}

\put(0,3){\line(1,-1){2}}
\put(6,3){\line(1,-1){2}}

\put(2,1){\line(1,0){4}}
\put(2,3){\line(1,0){4}}

\put(4,1){\line(0,1){2}}

\put(3.5,0){\line(1,2){.5}}
\put(4.5,0){\line(-1,2){.5}}

\put(3.5,4){\line(1,-2){.5}}
\put(4.5,4){\line(-1,-2){.5}}

\put(2.8,0){$E_{45}$}
\put(4.65,0){$E_{46}$}

\put(2.8,4){$E_{35}$}
\put(4.65,4){$E_{36}$}

\put(.155,.9){$E_{15}$}
\put(1.3,.9){$E_{14}$}

\put(.155,2.9){$E_{16}$}
\put(1.25,2.9){$E_{13}$}

\put(6.155,.9){$E_{24}$}
\put(7.3,.9){$E_{25}$}

\put(6.155,2.9){$E_{23}$}
\put(7.25,2.9){$E_{26}$}

\put(1.2,1.9){$\ell_1$}

\put(4.1,1.9){$E_{34}$}

\put(7.2,1.9){$\ell_2$}

\put(4.1,1.15){$\ell_4$}
\put(4,1){\circle{.2}}

\put(4.1,2.65){$\ell_3$}
\put(4,3){\circle{.2}}

\thinlines
\put(-.25,0.75){\framebox(2.5,2.5){}}
\put(5.75,0.75){\framebox(2.5,2.5){}}

\put(-.25,.4){$u=0$}
\put(7.3,.4){$u=\infty$}

\end{picture}
\caption{Some sections 
and fiber components of the standard 
fibration}
\label{fig}
\end{figure}

There is an immediate sublattice $N$ of $\NS(X)$
generated by the zero section and fiber components.
Here  this amounts to 
\[
N = U\perp D_4^2\perp A_1^6.
\]
Since the rank of $N$ equals
the Picard number $\rho=16$ of $X$,
the sublattice $N$ has finite index in $\NS(X)$.
We note two consequences.
First, the $(-2)$ curves $E_{ij} (3\leq i<j\leq 6)$ generically
sit on 6 fibers of type $I_2$ 
(because otherwise there would be an additional fiber component contributing to $\NS(X)$).
For later reference, we denote the other component of the respective fiber by $E_{ij}'$;
this gives another $-2$-curve on $X$.
Secondly, we deduce from  \eqref{eq:ST}  that generically the Mordell-Weil rank is zero.
Since $I_0^*$ fibers can only accommodate torsion section of order $2$,
the given four sections give the full 2-torsion.
Alternatively, this can be computed with the height pairing as sketched in \ref{ss:ell}
or it can be derived from the  actual equations
which we give in \ref{sss:eqn-6}.
From \eqref{eqn:discrNS} we deduce that $\NS(X)$ has discriminant 
\begin{eqnarray}
\label{eq:disc}
\mbox{discr} \NS(X) =-2^{10}/2^4=-2^6.
\end{eqnarray}
Since by  Corollary~\ref{NSis2elementary}  $\NS(X)$ is 2--elementary we find that  \eqref{eq:disc} 
implies  the length of the N\'eron--Severi lattice to equal  $6$.

\subsubsection{Transcendental lattice}

We want to compute the transcendental lattice $T(X)$. Again we need some 
general facts from lattice theory which we collect at this place for the reader's convenience.

For an even non-degenerate integral lattice $(L,\qf{-,-})$ recall (see "Notation")
 the discriminant group $\delta(L)=L^*/L$ and   the   $\Q/2\Z$--valued \textbf{discriminant  form}  induced by $\qf{-,-}$ denoted  $q_L$. The importance of this invariant stems from the following result of Nikulin \cite{Nik1}:  two even lattices with the same signature and discriminant form are in the same genus, i.e. are isomorphic over the rationals.
 
Below we need a more precise result in a special situation:
\begin{prop}[\protect{\cite[Prop.~4.3.2]{Nik}}] \label{nikulin} Any indefinite 2-elementary lattice is determined up to isometry
by signature, length, and the property whether the discriminant form 
takes only integer values or not.
\end{prop}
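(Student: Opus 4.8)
The plan is to reduce the proposition to a statement about finite quadratic forms, to classify those forms, and then to appeal to uniqueness within a genus. First I would invoke the result quoted just above: two even lattices of the same signature with isomorphic discriminant forms lie in a single genus. Thus it suffices to prove two things: (a) for an indefinite even $2$-elementary lattice the triple (signature, length $\ell$, parity invariant $\delta$) determines the discriminant form $q_L$ up to isomorphism; and (b) such a lattice is the unique class in its genus.

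For (a), note that $\delta(L)\cong(\Z/2\Z)^\ell$ and that $q_L$ is a nondegenerate $\Q/2\Z$-valued quadratic form on it. The starting point is the orthogonal decomposition of any such form into elementary blocks: the two integer-valued rank-two forms $u$ and $v$, which are the discriminant forms of $U(2)$ and $D_4$, and the rank-one forms $w^{+},w^{-}$ with $q=\pm\half$. By construction $q_L$ is integer-valued precisely when no $w^{\pm}$ occurs, so $\delta=0$ iff only $u,v$ appear and $\delta=1$ otherwise. I would then reduce an arbitrary decomposition to a normal form using the standard relations among these blocks \cite{Nik1}, for instance $u\perp u\cong v\perp v$ and $w^{\epsilon}\perp w^{\epsilon}\perp w^{\epsilon}\cong w^{-\epsilon}\perp v$. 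After this reduction the surviving invariants are $\ell$, the invariant $\delta$, and the signature of $q_L$ modulo $8$. By the Milgram--van der Blij formula the latter equals the signature of $L$ modulo $8$; hence, once the actual signature of $L$ is fixed, $q_L$ is pinned down by $\ell$ and $\delta$ alone, which is (a).

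For (b), I would apply Eichler's theorem in Nikulin's form \cite{Nik1}: an indefinite even lattice of rank at least $\ell+2$ is the unique class in its genus. Together with (a) and the quoted genus result this settles the proposition whenever $\rank(L)\geq\ell+2$.

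I expect the combinatorial heart of (a) --- the reduction to a normal form and the verification that no further invariant survives --- to be the main obstacle, the delicate point being that $\delta$ is a genuine isomorphism invariant: for example $u$ and $w^{+}\perp w^{-}$ have the same length and the same signature modulo $8$, yet are separated only by $\delta$. A secondary subtlety is the borderline range $\rank(L)<\ell+2$ of (b), illustrated by $U(2)\perp U(2)$, where the simple Eichler bound fails and one must fall back on Nikulin's finer analysis of the spinor genus tailored to $2$-elementary forms.
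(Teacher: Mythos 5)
First, a framing remark: the paper does not prove this proposition at all --- it is imported verbatim from Nikulin \cite{Nik}, and the only internal ``proof'' is that citation. So your sketch can only be measured against the standard argument underlying Nikulin's theorem, and that is indeed the route you take: classify $2$-elementary discriminant forms, then combine the genus principle with uniqueness in the genus. Your part (a) is essentially correct as a sketch; note only that the two relations you quote do not suffice by themselves (for instance $w^{+}\perp w^{+}\perp w^{-}\perp w^{-}$ and $u\perp w^{+}\perp w^{-}$ have the same length, parity and signature, but neither of your relations applies to them), so you also need relations such as $w^{\epsilon}\perp w^{\epsilon}\perp w^{-\epsilon}\cong u\perp w^{\epsilon}$ from the full list in \cite{Nik1}. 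Also, the proposition should be read, as in Nikulin, for \emph{even} lattices; your argument tacitly assumes this (via the $\Q/2\Z$-valued form and Milgram), which is harmless since every application in the paper is to even lattices.

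The genuine gap is in part (b), and your description of it as a ``secondary subtlety'' inverts the actual situation. For an even $2$-elementary lattice one has $r:=\mathrm{rank}(L)\equiv\ell\pmod 2$ (over $\Z_2$ the unimodular Jordan block must be even, hence of even rank), so the case excluded by Eichler's bound is exactly $r=\ell$; and this case is not marginal: \emph{every} lattice to which the paper applies the proposition is of this type ($T(X)=U(2)^2\perp A_1^2$ has $r=\ell=6$, and $T(X_2)=U(2)^2\perp A_1$ has $r=\ell=5$). Thus the bound $r\geq\ell+2$ covers none of the cases that matter here, and your fallback --- ``Nikulin's finer analysis of the spinor genus tailored to $2$-elementary forms'' --- is precisely the theorem being proved, so the argument is circular exactly where it is needed. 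The missing case admits a clean elementary treatment you could have supplied: if $r=\ell$, then $2L^*\subset L\subset L^*$ together with $[L^*:L]=2^{\ell}=2^{r}=[L^*:2L^*]$ forces $L=2L^*$, hence $L\cong N(2)$ where $N:=L^*(2)$ is an integral \emph{unimodular} lattice, indefinite of the same signature as $L$, and even precisely when $\delta=0$. Milnor's classification of indefinite unimodular lattices (odd ones are $\langle 1\rangle^{t_+}\perp\langle -1\rangle^{t_-}$, even ones are likewise determined by their signature) then yields uniqueness; together with your (a) and the case $r\geq \ell+2$ this closes the proof.
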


We return to our double sextics $X$ in the generic situation.
Since $T(X)$ and $\NS(X)$ are orthogonal complements embedded primitively  into the unimodular lattice $\Lambda$,
we find by construction an isomorphism of discriminant forms
\begin{eqnarray}
\label{eq:q}
q_{T(X)} \cong -q_{\NS(X)}.
\end{eqnarray}
In particular, $T(X)$ is again 2-elementary of length $l=6$, and of signature $(2,4)$.
So we may apply  the above Prop.~\ref{nikulin}. 
To do so we need to be able to determine properties of  the discriminant form.
To decide this without going through explicit computations with divisor classes on $X$,
we switch to another elliptic fibration on $X$.

\subsubsection{Alternative elliptic fibration}
\label{ss:2D6}

In order to exhibit another elliptic fibration on $X$,
we start by identifying two perpendicular divisors of Kodaira type $I_2^*$:
\begin{eqnarray*}
D_1 & = & E_{15} + E_{16} + 2(\ell_1+E_{13}+\ell_3) + E_{35} + E_{46}'\\
D_2 & = & E_{25} + E_{26} + 2(\ell_2 + E_{24} + \ell_4) + E_{45} + E_{36}'
\end{eqnarray*}
Their linear systems induce an elliptic fibration with section induced by $\ell_6$,
since $\ell_6.D_i=1$.
In addition to the two fibers of type $I_2^*$,
there are 2 further reducible fibers with identity component
$E_{34}'$ on the one hand and $E_{56}$ on the other hand.
Rank considerations imply that their type is generically $I_2$,
so that the given fibers and the zero section generate the sublattice
$U+D_6^2+A_1^2$ of $\NS(X)$.
In fact, since ranks and discriminants agree, we find the generic equality
\begin{eqnarray}
\label{eq:2D6}
\NS(X) = U\perp D_6^2\perp A_1^2.
\end{eqnarray}
In particular, this singles out $A_1^*/A_1$ as an orthogonal summand 
of the discriminant group $\delta(\NS(X))=\NS(X)^*/\NS(X)$.
Its quadratic form thus takes non-integer values in $\frac 12\Z/2\Z$.
As $T(X)$ has the same invariants as $U(2)^2 \perp  A_1^2$, by   Prop.~\ref{nikulin} these must be isomorphic:
\[
T(X) = U(2)^2 \perp  A_1^2.
\]
This concludes the proof of Theorem \ref{MainTHGenericCase}.

The representation of $\NS(X)$ in \eqref{eq:2D6}
is especially useful for the concept of lattice enhancements
as it allows for writing an abstract isomorphism of discriminant forms as in \eqref{eq:q}.
We will make this isomorphism explicit in \ref{ss:D6}
and exploit it on the level of elliptic fibrations.

\subsection{Lattice Enhancements}\label{ssec:LattEnh}
\subsubsection{General Theory}
\label{ss:latt-enh}

The theory of lattice polarised K3 surfaces as sketched in \ref{ss:intro} predicts
for a given even lattice $L$ of signature $(1,r-1)$
that K3 surfaces admitting a primitive embedding
\[
L\hookrightarrow \NS
\]
come in $(20-r)$-dimensional families
(if $L$ admits a primitive embedding into the K3 lattice $\Lambda = U^3\perp E_8^2$ at all).
Equivalently, on the level of transcendental lattices, 
the primitive embedding has to be reversed for $M$ the orthogonal complement of $L$ in $\Lambda$:
\[
T \hookrightarrow M.
\]
Lattice enhancements provide an easy concept of specifying subfamilies of lattice polarised K3 surfaces.
Namely one picks a vector $v\in M$
of negative square $v^2<0$
and enhances $\NS$ by postulating $v$ to correspond to an algebraic class.
Generically this leads to a codimension one subfamily of K3 surfaces
with transcendental lattice
\[
T= v^\bot \subset M.
\]
The generic N\'eron-Severi lattice arises as primitive closure (or saturation)
\[
\NS = (L + \Z v)' \subset \Lambda.
\]
Explicitly $\NS$ can be computed with the discriminant form.
Namely $v$ induces a unique primitive element in the dual lattice $M^*$.
The resulting equivalence class $\bar v\in M^*/M$
corresponds via
the isomorphism of discriminant forms
$q_M \cong -q_L$ as in  \eqref{eq:q} with an equivalence class $\bar w\in L^*/L$.
Enhancing  $L + \Z v$ by $\bar v + \bar w$ results in a well-defined even saturated lattice 
which exactly gives $\NS$.

Note that presently $M=U(2)^2\perp A_1^2$ has rank equalling its length,
so any primitive vector $v\neq 0$ induces an order 2 element $\bar v$ in $M^*/M$.
In other words, $(L+\Z v)$ has index $2$ in its primitive closure.
If $v$ is assumed to be primitive in $M$, then we find the generic discriminant
of the lattice enhancement 
\begin{eqnarray}
\label{eq:v^2}
\mbox{discr} \NS = (\mbox{discr} \, L) \cdot v^2/4 = -16v^2.
\end{eqnarray}

In the following we want to study the K3 surfaces
corresponding to the divisors $D_\Delta$ in the moduli space
and relate them to Hermann's work \cite{Her}.
To this end, we shall enhance $\NS$ by a primitive representative $\mathbf{y}^*$
as explained in Corollary \ref{OneOrbit} and the following paragraph.

\subsubsection{Example: lattice enhancements by a $-4$ vector}
\label{ss:-4}
 
 We return to our double sextics branched along 6 lines.
Following \ref{ss:latt-enh} we will enhance the  N\'eron-Severi lattice by a $-4$-vector
from $M=U(2)^2\perp A_1^2$.
We consider two ways to do so
which we will soon see to be inequivalent and exhaustive.
Following up on Example \ref{warning}
we shall take either
\[
v_1=(0,0,0,0,1,1) \;\; \text{ or } \;\;v_2=(1,-1,0,0,0,0).
\]
Computing their orthogonal complements in $M$,
we find the generic transcendental lattices of 
the enhanced lattice polarised K3 surfaces:
\begin{eqnarray}
\label{eq:T1}
T_1  = v_1^\bot & = & U(2)^2 \perp  \langle -4\rangle,\\
\label{eq:T2}
T_2  =  v_2^\bot & = & U(2) \perp  A_1^2 \perp  \langle 4\rangle.
 \end{eqnarray}
Note that the first lattice
(which corresponds to $\Delta=1$ by Example \ref{warning}, see also \ref{sss:3.6.9}) is 2-divisible as an even lattice 
while the second lattice (corresponding to $\Delta=4$, see also \ref{ss:4}) certainly is not.
This confirms that these two cases are indeed inequivalent.
In what follows, we will interpret the enhancements in terms of elliptic fibrations.
Along the way, we will verify that any other lattice enhancement by a $-4$-vector
is equivalent to one of the above.

\subsubsection{Interpretation in terms of elliptic fibrations}
\label{ss:D6}

In view of the Picard number, a jacobian elliptic fibration can be enhanced in only 2 ways:
either by a degeneration of singular fibers (changing the configuration of ADE-types)
or by an additional section (which any multisection can be reduced to by \eqref{eq:E}).
For the second alternative, there are usually many possibilities,
distinguished by the height of the section, but also by precise intersection numbers,
for instance encoding the fiber components met.
However, once we fix the discriminant which we are aiming at,
this leaves only a finite number of possibilities.

As an illustration, consider the fibration from \ref{ss:2D6}
exhibiting the representation
\[
\NS(X) = U \perp D_6^2 \perp A_1^2.
\]
Enhancing $\NS$ as in \ref{ss:-4}, we reach a subfamily of lattice polarised K3 surfaces
of Picard number $\rho=17$ and   discriminant $64$ by \eqref{eq:v^2}.
As the discriminant stays the same as before up to sign,
there are only 3 possibilities of enhancement to start with:
\begin{itemize}
\item
2 fibers of type $I_2$ degenerate to $I_4$,
\item
$I_1$ and $I_2^*$ degenerate to $I_3^*$,
\item
or a section $P$ of height  $h(P)=1$.
\end{itemize}
Using the theory of Mordell-Weil lattices from \ref{ss:ell}, 
the third case can be broken down into another 3 subcases,
depending on the precise fiber components met.
Recall that the non-identity components
of $I_n^*$ fibers ($n>0$) are  divided into the near component
(only one component away from the identity component)
and the two far components as visible in the corresponding root diagram of Dynkin type $D_{n+4}$:

\begin{figure}[ht!]
\setlength{\unitlength}{,5mm}
\begin{picture}(100,20)(-60,0)

%
\multiput(23,8)(20,0){4}{\circle*{3}}
\put(23,8){\line(1,0){60}}
\put(83,8){\line(2,1){17}}
\put(83,8){\line(2,-1){17}}
\put(100,16){\circle*{3}}
\put(100, 0){\circle*{3}}
\put(5,7){\small near}
\put(105,14){\small far}
\put(105, -2){\small far}

%

\end{picture}
\end{figure}

An easy enumeration of the possible configurations
reveals the following possibilities for a section $P$ of height $h(P)=1$;
all of them have $P$ perpendicular to the zero section.

\begin{center}
\begin{tabular}{c|c|c}
alternative & $I_2^*$'s & $I_2$'s\\
\hline
(1) & far, far & id, id\\
(2) & far, near & id, non-id\\
(3) & near, near & non-id, non-id
\end{tabular}
\end{center}

\subsubsection{Comparison of enhancements}
\label{ss:compare}

We shall now compare our investigation of the above elliptic fibration 
with the concept of lattice enhancements
 by making the isomorphism \eqref{eq:q} explicit.
  We start by  calculating the discriminant form  of $D_6$. The discriminant group is $(\Z/2\Z)^2$ with generators   $a^\pm\in  D_6\otimes\Q$  represented by elements meeting each one of  the two far nodes $r^\pm$ precisely once and none of the remaining roots $r_1,\dots,r_4$ (enumerated from left to right).
  \begin{figure}[ht!]
\setlength{\unitlength}{,5mm}
\begin{picture}(100,20)(-60,0)

\multiput(23,8)(20,0){4}{\circle*{3}}
\put(23,8){\line(1,0){60}}
\put(83,8){\line(2,1){17}}
\put(83,8){\line(2,-1){17}}
\put(100,16){\circle*{3}}
\put(100, 0){\circle*{3}}

 \put(17,15){$-\half$}
 \put(37,15){$-1$}
 \put(57,15){$-\frac 32$}
  \put(77,15){$-2$}

\put(102,14){$-\frac 32$}
\put(102, -2){$-1$}

\end{picture}
\end{figure}

 The correct rational linear combination $a^+=-(\half r+\frac 32 r^++r^-)$,   $r=r_1+2r_2+3r_3+4r_4$  a root,   is shown in the figure. Of course the combination for $a^-$ is similar and so we find
 $(a^\pm)^2=-\frac 32$ and $a^+\cdot a^-=-1$ so that
 \[ q_{D_6}=\begin{pmatrix}-\frac 32 & -1\\ -1 &-\frac 32\end{pmatrix}.
 \]
The discriminant group of $U(2)$ is also $(\Z/2\Z)^2$ with 
basis 
$e, f$ induced from 
the standard basis of $U(2)$. It follows that 
\[
q_{U(2)}=\begin{pmatrix} 0 & \frac 12 \\ \half & 0 \end{pmatrix}.
\] 
Thus one easily verifies the isomorphism in the standard basis:
\begin{eqnarray*}
q_{U(2)\perp A_1} & \stackrel{\cong}{\longrightarrow} & -q_{D_6\perp A_1} \\
(e,f,g) & \mapsto & (g+e,g+f,e+f+g)
\end{eqnarray*}
Duplicated this directly extends to the isomorphism \eqref{eq:q}.
We continue by computing the impact of the enhancing vectors $v_i$ from \ref{ss:-4}.

Starting out with $v_1$, this vector induces the element $(0,0,1)$ in either copy of 
$q_{U(2)\perp A_1}\hookrightarrow q_{T(X)}$.
In each $q_{D_6\perp A_1}$ this corresponds to the class $(1,1,1)$. 
Thus we obtain an algebraic class meeting each reducible fiber.
A priori this would be a multisection,
but using the group structure it induces a section, necessarily of height $1$, of the third alternative in \ref{ss:D6}.

Next we turn to $v_2$. 
We have
\[
\bar v_2 = ((1,1,0),(0,0,0)) \in (q_{U(2)\perp A_1})^2 \cong q_{T(X)}.
\]
Hence $v_2$ induces the same class in $(q_{D_6\perp A_1})^2$.
This corresponds to an algebraic class meeting only one $I_2^*$ fiber non-trivially.
By inspection of the alternatives in \ref{ss:D6},
this class cannot be a section of height $1$ (which always meets both $I_2^*$ fibers non-trivially),
but it fits in with the degeneration of $I_1$ and $I_2^*$ to $I_3^*$.

\subsubsection{Connection with other enhancements}
\label{ss:connect}

Before returning to the arrangement of the 6 lines,
we comment on the other three possible enhancements of the elliptic fibration in \ref{ss:D6}.
In fact, the freedom of choosing some lines out of the 6 carries over to these elliptic fibrations
endowing $X$ with several different ones of the same shape.
We leave it to the reader to follow the degeneration of singular fibers on the given fibration
through the other elliptic fibrations.
Without too much effort, this enables us to identify all remaining degenerations with the one 
which was shown in \ref{ss:compare} to correspond to
the lattice enhancement by $v_2$.

\subsection{Special Arrangements of the $6$ Lines; Proof of Theorem~\ref{MainTHDegenerations}}\label{ssec:SpecLines}

We are now in the position to investigate the subfamilies of our double sextics
corresponding to the first few divisors on the moduli space of Abelian fourfolds
of Weil type.
In each case, we start from the special arrangement of lines
to fill out the geometric and lattice theoretic details.

\subsubsection{$\Delta=2$}
\label{sss:3.6.1}
   
Recall from \S~\ref{subsec:6Lines} that $D_2$ corresponds to $X_3$.
We now consider the component  $X^{345}_3$, that is, when the lines $\ell_3, \ell_4, \ell_5$ meet in a single point.
On the double covering K3 surface, this results in a triple point
whose resolution requires an additional blow-up.
On the degenerate K3 surface, the original exceptional divisors can still be regarded as perpendicular
(with notation adjusted, see the figure below);
with the lines, however, they do not connect  to a hexagon anymore,
but  to a star  through the additional exceptional component $D$ (Kodaira type $IV^*$):

\begin{figure}[ht!]
\setlength{\unitlength}{.35in}
\begin{picture}(12,5)(-1,-0.4)
\thicklines

\multiput(0,2)(4,0){2}{\circle*{.1}}
\multiput(1,4)(2,0){2}{\circle*{.1}}
\multiput(1,0)(2,0){2}{\circle*{.1}}

\multiput(1,0)(0,4){2}{\line(1,0){2}}
\multiput(0,2)(3,2){2}{\line(1,-2){1}}
\multiput(0,2)(3,-2){2}{\line(1,2){1}}

\multiput(8,2)(1,0){3}{\circle*{.1}}
\multiput(11,1)(0,2){2}{\circle*{.1}}
\multiput(12,0)(0,4){2}{\circle*{.1}}

\put(10,2){\circle*{.2}}

\put(10,2){\line(1,1){2}}
\put(10,2){\line(1,-1){2}}
\put(10,2){\line(-1,0){2}}

\put(-.6,2){$\ell_3$}
\put(7.4,2){$\ell_3$}

\put(3.2,-.3){$\ell_5$}
\put(12.2,-.3){$\ell_5$}

\put(3.2,4){$\ell_4$}
\put(12.2,4){$\ell_4$}

\put(.4,-.4){$E_{35}$}
\put(.15,4){$E_{34}$}
\put(4.2,2){$E_{45}$}

\put(8.8,2.2){$E_3$}
\put(11.2,2.8){$E_4$}
\put(11.2,1){$E_5$}

\put(10.3,1.9){$D$}

\put(5.8,1.9){$\leadsto$}

%
%
%
%
%
%
%
%
%
%
%
%
%
%
%
%
%
%
%
%
%
%
%
%
%
%
%
%
%

%
\end{picture}
\end{figure}

On the standard elliptic fibration from \ref{ss:2D4},
this degeneration causes three $I_2$ fibers to merge to a single additional fiber of type $I_0^*$
(with a 'new' rational curve $E$ as 4th simple component;
compare Figure \ref{fig} where also some rational curves such as $\ell_5, \ell_6$ have been omitted):

\begin{figure}[ht!]
\setlength{\unitlength}{.45in}
\begin{picture}(8,4.2)(-1.5,0)
\thicklines

\multiput(0,1)(2,0){5}{\circle*{.1}}

\multiput(0,3)(2,0){5}{\circle*{.1}}

\multiput(1,2)(6,0){2}{\circle*{.1}}

\multiput(4.5,0)(1,0){1}{\circle*{.1}}
\multiput(4.5,4)(1,0){1}{\circle*{.1}}

\put(0,1){\line(1,1){2}}
\put(6,1){\line(1,1){2}}

\put(0,3){\line(1,-1){2}}
\put(6,3){\line(1,-1){2}}

\put(2,1){\line(1,0){4}}
\put(2,3){\line(1,0){4}}

\put(4,1){\line(0,1){2}}
\put(3.5,2){\line(1,0){1}}

\put(4.5,0){\line(-1,2){.5}}

\put(4.5,4){\line(-1,-2){.5}}

\put(4,1.5){\circle*{.1}}
\put(4,2.5){\circle*{.1}}
\put(3.5,2){\circle*{.1}}
\put(4.5,2){\circle*{.1}}
\put(4,2){\circle*{.1}}

\put(4.1,1.4){$E_4$}
\put(4.1,2.4){$E_3$}

\put(4.6,1.9){$E_5$}

\put(3.1,1.9){$E$}
\put(3.65,1.68){$D$}

\put(4.65,0){$E_{46}$}

\put(4.65,4){$E_{36}$}

\put(.155,.9){$E_{15}$}
\put(1.3,.9){$E_{14}$}

\put(.155,2.9){$E_{16}$}
\put(1.25,2.9){$E_{13}$}

\put(6.155,.9){$E_{24}$}
\put(7.3,.9){$E_{25}$}

\put(6.155,2.9){$E_{23}$}
\put(7.25,2.9){$E_{26}$}

\put(1.2,1.9){$\ell_1$}


\put(7.2,1.9){$\ell_2$}

\put(3.65,.65){$\ell_4$}
\put(4,1){\circle{.2}}

\put(3.65,3.25){$\ell_3$}
\put(4,3){\circle{.2}}

\thinlines
\put(-.25,0.75){\framebox(2.5,2.5){}}
\put(5.75,0.75){\framebox(2.5,2.5){}}

\put(-.25,.4){$u=0$}
\put(7.3,.4){$u=\infty$}

\put(3,1.25){\framebox(2.1,1.5){}}

\end{picture}
\end{figure}

Thus $\NS$ has the index 4 sublattice $U \perp D_4^3 \perp  A_1^3$ -- which is again 2-elementary.
The remaining generators of $\NS(X)$ can be given by the 2-torsion sections.
To decide on the discriminant form, we once more switch  
to the alternative elliptic fibration from \ref{ss:2D6}.
The fibration degenerates as follows: 
in the notation from \ref{ss:2D6}
we have to replace $E_{35}, E_{45}$ by $E_3, E_4$ as components of the $I_2^*$ fibers,
and $E_{34}'$ by $E_5$ as component of one $I_2$ fiber.
Then $E_{56}$ still sits on a second $I_2$ fiber
while $E$ gives yet another one.
Here $D$ induces a 2-torsion section:
visibly it meets both $I_2^*$ fibers at far components.
As for the $I_2$ fibers, it meets the one containing $E_{56}$ at the other  component 
(i.e.~non-identity) and the one containing $E_5$ in this very component (non-identity again).
Since the height of a section is non-negative, 
this already implies that the section $D$ has height $0$;
then the fiber types predict that $D$ can only be 2-torsion.

For completeness we study the fiber containing $E$ in detail.
Since $\ell_6$ is also a section for the standard fibration,
it meets some simple component of the degenerate $I_0^*$ fiber.
Obviously $\ell_6$ does not meet any of $E_3, E_4, E_5$.
Hence $\ell_6$ has to meet $E$.
In conclusion $E$ is the identity component of  the degenerate $I_2$ fiber
of the alternative fibration.
As $D$ meets this fiber trivially, 
we find the orthogonal decomposition
\[
\NS(X) = U \perp \langle D_6^2, A_1^2, D\rangle \perp A_1.
\]
As before, we deduce from the orthogonal summand $A_1$
that the discriminant form takes non-integer values.
Hence by Proposition \ref{nikulin} we deduce that
\begin{equation}
T(X) =  U(2)^2\perp A_1 \;\;\; \text{corresponding to} \;\;\; \Delta=2.
\end{equation}
In the language of lattice enhancements, the subfamily thus arises
from a generator of either $A_1$ summand in the generic transcendental lattice.
Recall that geometrically, this vector corresponds to the extra rational curve $D$ involved in the resolution
of the triple point where three lines come together.
Conversely, we can derive from Proposition \ref{nikulin} again that
the N\'eron-Severi lattice admits several representations purely in terms of $U$ and root lattices
such as 
\[
\NS(X) = U \perp D_4^2\perp E_7.
\]
This concludes the proof of Theorem \ref{MainTHDegenerations} 1. 
\qed \endproof

\subsubsection{$\Delta=4$}
\label{ss:4}

Recall from \S~\ref{subsec:6Lines} that $X_4$ comes from the divisor $\mathbf{X}_{\rm coll}$. Let $\ell$
be the line which contains the collinear points.
Then $\ell$ splits on $X$ as $\pi^* \ell = \ell'+\ell''$.
Let $D=\ell'-\ell''$.
Since $D$ is anti-invariant for the covering involution, 
it defines an algebraic divisor on $X$ which is orthogonal to the
classes specialising from the generic member.
By construction, $\ell'.\ell''=0$ so that $D^2=-4$.
In particular, $D$ is primitive in $\NS(X)$,
and $X$ arises from a lattice enhancement by the $-4$-vector $D$ as in \ref{ss:-4}.
Presently we can even give a $\Z$-basis of $\NS(X)$
by complementing the generic basis by $\ell'$, say.
To compute $\NS(X)$ and $T(X)$ without writing out intersection matrices etc,
we make use of elliptic fibrations again.

For the standard fibration,
it is convenient to choose the collinear points as $P_{12}, P_{34}, P_{56}$.
From the obvious $-2$-curves, each $\ell'$ and $\ell''$ then meets 
exactly the corresponding exceptional divisors $E_{12}, E_{34}, E_{56}$
on $X$.
On the standard fibration from \ref{ss:2D4},
the two singular fibers of type $I_2$ at $E_{34}$ and $E_{56}$ are thus connected by $\ell', \ell''$,
merging to a fiber of type $I_4$.
Note that this indeed preserves the discriminant up to sign while raising the rank by one.

\begin{figure}[ht!]
\setlength{\unitlength}{.45in}
\begin{picture}(8,4.2)(-1.5,0)
\thicklines

\multiput(0,1)(2,0){2}{\circle*{.1}}

\multiput(0,3)(2,0){2}{\circle*{.1}}

\multiput(6,1)(2,0){2}{\circle*{.1}}

\multiput(6,3)(2,0){2}{\circle*{.1}}

\multiput(1,2)(6,0){2}{\circle*{.1}}

\multiput(3,0)(1,0){2}{\circle*{.1}}
\multiput(3,4)(1,0){2}{\circle*{.1}}

\multiput(3.5,1)(2,0){1}{\circle*{.1}}

\multiput(3.5,3)(2,0){1}{\circle*{.1}}

\put(0,1){\line(1,1){2}}
\put(6,1){\line(1,1){2}}

\put(0,3){\line(1,-1){2}}
\put(6,3){\line(1,-1){2}}

\put(2,1){\line(1,0){4}}
\put(2,3){\line(1,0){4}}

\put(3.5,1){\line(0,1){2}}
\put(3.5,2){\line(1,1){.5}}
\put(3.5,2){\line(1,-1){.5}}

\put(4.5,2){\line(-1,1){.5}}
\put(4.5,2){\line(-1,-1){.5}}

\put(3,0){\line(1,2){.5}}
\put(4,0){\line(-1,2){.5}}

\put(3.,4){\line(1,-2){.5}}
\put(4,4){\line(-1,-2){.5}}

\put(4,1.5){\circle*{.1}}
\put(4,2.5){\circle*{.1}}
\put(3.5,2){\circle*{.1}}
\put(4.5,2){\circle*{.1}}

\put(4.1,1.35){$\ell''$}
\put(4.1,2.45){$\ell'$}

\put(4.6,1.9){$E_{56}$}

\put(2.85,1.9){$E_{34}$}

\put(2.3,0){$E_{45}$}
\put(4.15,0){$E_{46}$}

\put(2.3,4){$E_{35}$}
\put(4.15,4){$E_{36}$}

\put(.155,.9){$E_{15}$}
\put(1.3,.9){$E_{14}$}

\put(.155,2.9){$E_{16}$}
\put(1.25,2.9){$E_{13}$}

\put(6.155,.9){$E_{24}$}
\put(7.3,.9){$E_{25}$}

\put(6.155,2.9){$E_{23}$}
\put(7.25,2.9){$E_{26}$}

\put(1.2,1.9){$\ell_1$}


\put(7.2,1.9){$\ell_2$}

\put(3.75,.68){$\ell_4$}
\put(3.5,1){\circle{.2}}

\put(3.75,3.18){$\ell_3$}
\put(3.5,3){\circle{.2}}

\thinlines
\put(-.25,0.75){\framebox(2.5,2.5){}}
\put(5.75,0.75){\framebox(2.5,2.5){}}

\put(-.25,.4){$u=0$}
\put(7.3,.4){$u=\infty$}

\put(2.8,1.25){\framebox(2.4,1.5){}}

\end{picture}
\end{figure}

Switching to the alternative fibration from \ref{ss:2D6},
the classes $\ell', E_{56}, \ell''$ which correspond to the root lattice $A_3$ 
remain orthogonal to the fibers $D_1, D_2$.
Generically they are therefore contained in a fiber of type $I_4$,
merging the $I_2$ fibers generically at $E_{56}$ and at $E_{34}'$.
That is, $\NS(X)=U\perp D_6^2\perp A_3$.
By \ref{ss:compare}, \ref{ss:connect} 
we can thus verify
that $X$ arises from the lattice enhancement by $v_2$ with transcendental lattice 
$T(X)=U(2)\perp \langle 4\rangle\perp A_1^2$ as stated in Theorem \ref{MainTHDegenerations} 2.

\subsubsection{$\Delta=1$}

As the key part of this subsection,
we now come to the case $\Delta=1$
which will cover almost the rest of this section up to \ref{sss:3.6.9}. 
Recall from \S\ref{subsec:6Lines} that we have the
divisor $XQ\subset X$ of 6-uples of lines tangent to a fixed conic. This divisor can be identified with $D_1$ as follows
from \cite[Prop. 2.13.4]{Mats}. Indeed, that  hyperplane $XQ$ is exactly the hyperplane orthogonal to our $v_1$ (see \S~\ref{ss:-4}).  

This can also be read off directly from the fact that
$X$ is a Kummer surface. Indeed, by \cite[\S~0.19]{MSY} the surface $X$   arises from the jacobian of the genus 2 curve
which is the double cover of the conic branched along the six intersection points with the lines.
This gives $T(X)=U(2)^2+\langle-4\rangle$ in agreement with the lattice enhancement by $v_1$. We shall confirm this from our methods using elliptic fibrations.

The above argument, however, gives no information about the extra algebraic class
needed to generate $\NS(X)$ over $\Z$.
To overcome this lack of a generator,
we shall work geometrically with elliptic fibrations,
starting with the alternative fibration.
Going backwards in our constructions,
we first develop the corresponding section on the standard fibration
and then interpret this in terms of  a certain conic in $\PP^2$
which splits on $X$.
Finally we confirm our geometric arguments by providing explicit equations.
Throughout we do not use any information about the Kummer surface structure.

 First, denoting the conic  by $C$, we have a splitting $C=C_1+C_2$ on $X$. 
As in \ref{ss:4}, the divisor $D=C_1-C_2$ is anti-invariant for the covering involution
and therefore orthogonal to the rank 16 sublattice of $\NS(X)$ generated by
the classes of the lines and the exceptional divisors.
The subtle difference, though, is that $D$ is in fact $2$-divisible in $\NS(X)$
since $C\sim 2H$:
\[
\frac 12 D = H-C_2 \in\NS(X).
\]
Since $D^2=-16$, we find that the latter class has square $-4$,
hence we are indeed  confronted with a lattice enhancement as in \ref{ss:-4}.

\subsubsection{From alternative to standard fibration}
\label{sss:alt-st}

In \ref{ss:D6} we interpreted lattices enhancements in terms of the alternative elliptic fibration
from \ref{ss:2D6}.
By \ref{ss:compare}, \ref{ss:connect}
it is
alternative (3)  which corresponds to the lattice enhancement by $v_1$.
In detail, the alternative fibration admits a section $P$ intersecting the following fiber components (see Figure \ref{Fig:2} for the resulting diagram of $-2$-curves):

\begin{center}
\begin{small}
\begin{tabular}{c|cc|cc}
singular fiber & $D_1=I_2^*$ & $D_2=I_2^*$ &$I_2$ & $I_2$  \\
\hline
component met by $\ell'$ & $E_{15}$ & $E_{25}$ & opposite $E_{34}'$ & opposite $E_{56}$\\ 
\end{tabular}
\end{small}
\end{center}

On the standard fibration, $P$ defines a multisection
whose degree is not immediate.
Here we develop a backwards engineering argument
to prove that the degree is actually $1$,
i.e.~$P$ is a section for both fibrations.

A priori the degree $d$ of  the multisection $P$ need not be $1$ on the standard fibration,
but $P$ always induces a section $P'$ of height $1$.
The essential point of our argument is that we can read off from the alternative fibration 
which fiber components are not met by the multisection on the standard fibration.
For each singular fiber this leaves only one fiber component
with intersection multiplicity depending on the degree $d$.
But then we can use the group structure to determine
which fiber component will be met by the induced section $P'$.
Thanks to the specific singular fibers,
the argument only depends on the parity of  $d$:

\begin{center}
\begin{small}
\begin{tabular}{c|cc|cccccc}
singular fiber & $I_0^*$ & $I_0^*$ &$I_2$ & $I_2$ & $I_2$ & $I_2$ & $I_2$ & $I_2$ \\
\hline
\hline
comp's met by $P$ & $\begin{matrix} E_{15},\;\;\;\;\\ (d-1)  E_{14}\end{matrix}$ & 
$\begin{matrix}E_{25},\;\;\;\;\\ (d-1)  E_{23}\end{matrix}$ & $d  E_{34}$ & $d  E_{35}'$ & $d  E_{36}$ & $d  E_{45}'$ & $d  E_{46}$ & $d  E_{56}'$\\ 
\hline
comp met by $P'$ & $E_{16}$ & $E_{25}$ & $E_{34}$ & $E_{35}$ &
$E_{36}$ & $E_{45}'$ & $E_{46}'$ & $E_{56}'$\\
for even $d$ & non-id & non-id & id & id & id & id & id & id\\
\hline
comp met by $P'$ & $E_{15}$ & $E_{25}$ & 
$  E_{34}$ & $  E_{35}'$ & $  E_{36}$ & $  E_{45}'$ & $  E_{46}$ & $  E_{56}'$\\
for odd $d$ & non-id & non-id & id & non-id & id & id & non-id & id
\end{tabular}
\end{small}
\end{center}

Note that for even $d$
the section $P'$ would have even height by inspection of the fiber components met,
contradicting $h(P')=h(P)=1$.
Hence $d$ is odd, and the induced section $P'$ meets the fiber components 
indicated in the last two rows of the table.
With this section at hand, we can complete the circle:
namely
$P'$ defines a section for both fibrations,
of exactly the same shape as $P$, hence $P'=P$ (and $d=1$).

For later reference,
we point out the symmetry in the fiber components met by $P$ on the standard fibration:
on the $I_0^*$ fibers, it is exactly those met by the section $\ell_5$,
while on the $I_2$ fibers it is exactly those not met by $\ell_5$.
This symmetry is essential for the section to be well-defined 
as it ensures that adding a 2-torsion section to $P$ will always result in a section of height $1$
(compare \ref{sss:eqn}).

\subsubsection{From standard fibration to double sextic}
\label{sss:st-6}

On the double sextic model, 
the section $P$ arises from a curve $Q$ in $\PP^2$
which splits into 2 rational curves $Q_1, Q_2$ on $X$.
Here we give an abstract description of $Q$ and its components on $X$
based on the geometry of the elliptic fibrations.

From the alternative fibration
we know that $P$ is perpendicular to the lines $\ell_i$ for $i\neq 5$.
On the other hand, the standard fibration reveals by inspection of the above table
which exceptional curves intersect $P$:
\[
\text{exactly } \;\; E_{15}, E_{25}, E_{34}, E_{36}, E_{46}\;\; \text{ plus possibly } \;\; E_{12}.
\]
The latter is the only exceptional curve which is not visible
as section or fiber component on the standard fibration.
The remaining two intersection numbers can be computed as follows:
regarding $\ell_5$ as a 2-torsion section of the standard fibration,
the height pairing $\langle P,\ell_5\rangle=0$ implies by virtue of the fiber components met
that $P.\ell_5=0$.
As for $E_{12}$,
consider the auxiliary standard fibration defined by $u'=\ell_1/\ell_5$.
Then $P$ defines a section for this fibration as well, 
as it meets the fiber 
\[
(u')^{-1}(\infty) = 2\ell_5+E_{25}+E_{35}+E_{45}+E_{56}
\]
exactly in $E_{25}$ (transversally) by the above considerations.
Looking at the fiber
\[
(u')^{-1}(0) = 2\ell_1 + E_{12}+E_{13}+E_{14}+E_{16}
\]
we deduce $P.E_{12}=1$ from the fact that $P$ does not intersect the other fiber components.

Turning to the double sextic model of $X$,
$P$ defines a curve meeting 
$E_{12}, E_{15}, E_{25}, E_{34}, E_{36}, E_{46}$ transversally,
but no other exceptional curves $E_{ij}$ nor any of the $\ell_i$.
On the base $\PP^2$, this curve necessarily corresponds to a conic $Q$ 
through the six underlying nodes.
On $X$, this conic splits into two disjoint rational curves $Q_1, Q_2$
where $Q_1=P$, say, 
and $Q_2$ corresponds to the section $-P$ on the elliptic fibrations.

\subsubsection{Explicit equations}
\label{sss:eqn}

We start out with the general equation of a jacobian elliptic K3 surface  $X$ with 
singular fibers of type $I_0^*$ twice and 6 times $I_2$ over some field $k$ of characteristic $\neq 2$.
Necessarily this comes with full $2$-torsion.
Locating the $I_0^*$ fibers at $t=0,\infty$,
we can write
\begin{eqnarray}
\label{eq:leg}
X:\;\;\; tw^2 = x (x-p(t)) (x-q(t))
\end{eqnarray}
where $p, q\in k[t]$ have degree 2.
Here the $I_2$ fibers are located at $p=0, q=0$ $p=q$.
Up to symmetries, there are only two ways to 
endow the above fibration with a section of height $1$.
Abstractly, restrictions are imposed by the compatibility with the 2-torsion sections.
On the one hand, there is the symmetric arrangement encountered in \ref{sss:alt-st}.
This will be investigated below.
On the other hand, an asymmetric arrangement can be encoded, for instance,
in terms of the standard fibration
by a section with the following intersection behaviour:

\begin{center}
\begin{small}
\begin{tabular}{c|cc|cccccc}
singular fiber & $I_0^*$ & $I_0^*$ &$I_2$ & $I_2$ & $I_2$ & $I_2$ & $I_2$ & $I_2$ \\
\hline
component met & $E_{15}$ & $E_{26}$ & 
$  E_{34}$ & $  E_{35}$ & $  E_{36}$ & $  E_{45}$ & $  E_{46}$ & $  E_{56}'$\\
\end{tabular}
\end{small}
\end{center}

To see that the arrangements do indeed generically describe different K3 surfaces,
we switch to the alternative fibration from \ref{ss:2D6} for one final time.
Here the section $P$ with intersection pattern as in the above table
induces a bisection meeting far and near component of $D_1$
and far and identity component of $D_2$.
Using the group structure the induced section intersects both $I_2^*$ fibers in a far component.
In terms of \ref{ss:D6} this corresponds to alternative (1)
which was shown in  \ref{ss:compare}, \ref{ss:connect}
to differ from alternative (3) which underlies the section arrangement in \ref{sss:alt-st}.

\smallskip

We shall now continue by deriving equations admitting a section of height $1$
as encountered in \ref{sss:alt-st}.
In agreement with this,
we model the section $P$ to intersect the same components of the $I_0^*$ fibers
as the $2$-torsion section $(0,0)$.
In terms of the RHS of \eqref{eq:leg}, these correpond to the factor $x$.
The section $P$ therefore takes the shape 
\[
P=(at,\hdots)\;\; \text{ for some constant } \;\; a\in k.
\]
Upon subsituting into \eqref{eq:leg},
we now require the other two factors on the RHS 
to produce the same quadratic polynomial up to a constant.
Concretely this polynomial can be given by
\begin{eqnarray}
\label{eq:g=}
g = at-p.
\end{eqnarray}
Then we consider the codimension 1 subfamily of all $X$
such that there exists $b\in k, b\neq 1$ such that
\begin{eqnarray}
\label{eq:q=}
q = at-bg.
\end{eqnarray}
By construction, these elliptic K3 surfaces admit the section 
\[
P=(at, \sqrt{ab}g).
\]
This section has height $1$:
not only does it meet both $I_0^*$ fibers non-trivially
thanks to our set-up,
but also the $I_2$ fibers at $p-q=(b-1)g=0$
while being perpendicular to the zero section.
As a whole,
the family of K3 surfaces can be given by letting $g,a,b$ vary
and $p,q$ depend on them as above.
There are still normalisations in $t$ and in $(x,w)$ left
which bring us down to the 3 moduli dimensions indeed.

\subsubsection{Standard fibration reflecting the 6 lines}
\label{sss:eqn-6}

We shall now translate the above considerations
to the standard fibration
as it comes from the 6 lines in \ref{ss:2D4}.
With $u=\ell_1/\ell_2$, we naturally have an equation
\begin{eqnarray}
\label{eq:3-6}
uw^2 = \ell_3\ell_4\ell_5\ell_6.
\end{eqnarray}
Here we can use $u$ to eliminate $x$, say,
so that after clearing denominators the expressions $\ell_i$ are separately linear in both $y$ and $u$.
In particular, the family of elliptic curves over $\PP^1_u$ becomes evident,
with 2-torsion sections given by $\ell_i=0\; (i=3,4,5,6)$.

For ease of explicit computations, we normalise the lines to be
\[
\ell_1=x, \ell_2=y, \ell_3=x+y+z, \ell_4=a_1x+a_2y+a_3z,  \ell_5=z, \ell_6=b_1x+b_2y+b_3z.
\]
Working affinely in the chart $z=1$,
equation \eqref{eq:3-6} readily takes the shape of a twisted Weierstrass form
\[
uw^2 = ((u+1)y+1)((a_1u+a_2)y+a_3)((b_1u+b_2)y+b_3).
\]
Standard variable transformations take this to:
\[
uw^2 = (y+(a_1u+a_2)(b_1u+b_2)) (y+a_3(u+1)(b_1u+b_2))(y+b_3(u+1)(a_1u+a_2)).
\]
Translating to  the shape of \eqref{eq:leg} and solving for \eqref{eq:g=}, \eqref{eq:q=}, we find 
\[
b_1=ba_1b_3/(a_3+(b-1)a_1), \;\;\; b_2 = ba_2b_3/(a_3+(b-1)a_2)
\]
with $a=-ba_3b_3(a_1-a_2)^2/[(a_3+(b-1)a_1)(a_3+(b-1)a_2)]$.

\subsubsection{Conics on the double sextic}

Finally we can trace back the section $P$ to the double sextic model.
Step by step, it leads to the following conic in the affine chart $z=1$:
\begin{eqnarray*}
Q & = &
-a_1^2 x^2 a_2+a_1^2 x^2 a_3+a_1^2 x^2 b a_2-a_1^2 x a_2 y+x a_1^2 a_2 b+a_1^2 x a_2 y b\\
&&
-a_1 x a_2^2 y
 +2 a_3 x y a_1 a_2-x a_1^2 a_2+x a_3 a_1^2+a_1 x a_2^2 y b+a_1 a_2^2 y b\\
&&-a_1 a_2^2 y+a_3 y a_2^2
-a_2^2 y^2 a_1+a_2^2 y^2 a_3+a_2^2 y^2 b a_1.
\end{eqnarray*}
One directly verifies that $Q$ indeed passes through the nodes
$P_{12}, P_{15}, P_{25},$
$P_{34},$
 $P_{36}, P_{46}$ in $\PP^2$ as in \ref{sss:st-6}.
Hence $Q$ splits into two components on the double sextic $X$
one of which is $P$.

We conclude this paragraph by verifying that the subfamily of double sextics 
constructed in \ref{sss:eqn-6}
does in fact admit a conic which is tangent to each of the 6 lines.
For this purpose,
set
\[
\alpha=a_1(a_2-a_3), \beta = a_2(a_3-a_1), \gamma = a_3(a_1-a_2).
\]
Then the conic in $\PP^2$ given by
\[
\alpha^2x^2+\beta^2y^2+\gamma^2z^2-2(\alpha\beta xy+\alpha\gamma xz+\beta\gamma yz)=0
\]
meets each of the 6 lines $\ell_i$ tangentially.

\subsubsection{Conclusion}
\label{sss:3.6.9}

By comparison of moduli dimensions, it follows conversely
that the K3 surfaces $X_1$ with a conic tangent to each of the 6 lines of the branch locus
also admits a conic through a selection of 6 nodes as above.
From \ref{sss:alt-st} we therefore deduce
that $X_1$ generically arises from $X$ via the lattice enhancement by the vector $v_1$;
that is, by \ref{ss:-4}
\[
T(X_1) = U(2)^2 \perp \langle-4\rangle.
\]
The N\'eron-Severi lattice $\NS(X_1)$ is thus generically generated by the sublattice
$U\perp D_6^2\perp A_1^2$ coming from $X$
enhanced by the section $P$ from \ref{sss:alt-st}.
The simple representation of $\NS(X_1)=U\perp D_4\perp D_8\perp A_3$ in Theorem \ref{MainTHDegenerations}
is derived from the above fibration
by switching to yet another jacobian elliptic fibration as depicted below.

\begin{figure}[ht!]
\setlength{\unitlength}{.45in}
\begin{picture}(10,4)(-1,-1)
\thicklines

\multiput(0,0)(0,1.5){2}{\circle*{.1}}

\multiput(.75,.75)(1,0){3}{\circle*{.1}}

\multiput(3.5,0)(0,1.5){2}{\circle*{.1}}

\multiput(4.25,-.75)(0,3){2}{\circle{.1}}

\multiput(5,0)(0,1.5){2}{\circle*{.1}}

\multiput(5.75,.75)(1,0){3}{\circle*{.1}}

\multiput(8.5,0)(0,1.5){2}{\circle*{.1}}

\put(4,1.25){\circle*{.1}}
\put(4,.25){\circle*{.1}}

\qbezier(4,1.25)(3.8,.75)(4,.25)
\qbezier(4,1.15)(4.2,.75)(4,.25)

\put(4.5,1.25){\circle*{.1}}
\put(4.5,.25){\circle*{.1}}

\qbezier(4.5,1.25)(4.3,.75)(4.5,.25)
\qbezier(4.5,1.25)(4.7,.75)(4.5,.25)

%
%
%
%
\put(0,0){\line(1,1){.75}}
\put(0,1.5){\line(1,-1){.75}}

\put(.75,.75){\line(1,0){2}}

\put(5.75,.75){\line(1,0){2}}

\put(8.5,0){\line(-1,1){.75}}
\put(8.5,1.5){\line(-1,-1){.75}}

\put(2.75,.75){\line(1,1){1.5}}
\put(2.75,.75){\line(1,-1){1.5}}

\put(5.75,.75){\line(-1,1){1.5}}
\put(5.75,.75){\line(-1,-1){1.5}}

\put(4.25,-.75){\line(1,4){.25}}
\put(4.25,-.75){\line(-1,4){.25}}

\put(4.25,2.25){\line(1,-4){.25}}
\put(4.25,2.25){\line(-1,-4){.25}}

\put(5.75,.75){\circle{.2}}

\put(4.35,2.35){$O$}
\put(4.35,-1.05){$P$}
\put(5.85,.85){$O'$}

\thinlines
\put(4.125,-1.2){\framebox(1.2,1.6){}}
\put(5.4,-1.1){$A_3$}

\put(6.5,-.25){\framebox(2.25,2){}}
\put(8.8,-.15){$D_4$}
%

\put(-.25,-.25){\line(1,0){3}}
\put(-.25,-.25){\line(0,1){3}}
\put(-.25,2.75){\line(1,0){5.5}}
\put(5.25,2.75){\line(0,-1){1.375}}
\put(5.25,1.375){\line(-1,0){.875}}
\put(2.75,-.25){\line(1,1){1.625}}

\put(-.75,2.3){$\tilde D_8$}

\end{picture}
\caption{From alternative fibration to $U\perp D_4\perp D_8\perp A_3$ on $X_1$}
\label{Fig:2}
\end{figure}
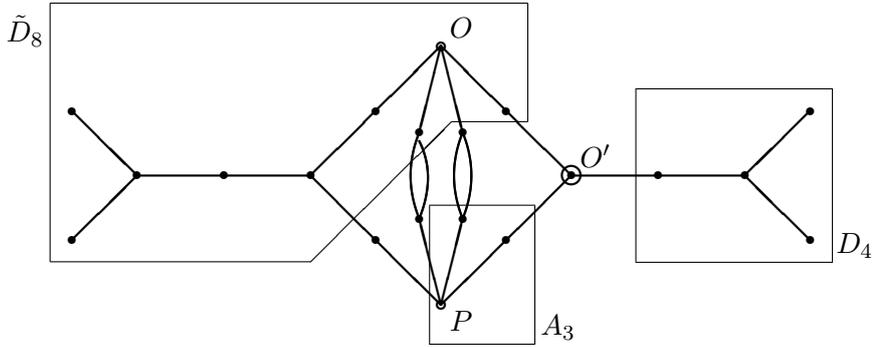

\subsubsection{$\Delta=6$}

Our aim for the final bit of this section
is to understand the geometry of the K3 surfaces for $D_6$, i.e.~the case $\Delta=6$
(the final Heegner divisor singled out in \cite{kondo}).
By Corollary \ref{OneOrbit} and the discussion succeeding it,
this corresponds to a lattice enhancement by a vector $v\in T(2)$ of square $v^2=-6$.
Here we choose the primitive representative
\[
v=(1,-1,1) \text{ in one copy of } U(2)\perp A_1\subset T(2),
\]
augmented by zeroes in $T(2)$.
Then $v/2$ defines a class in the discriminant group $T(2)^*/T(2)$
which via the isomorphism in \ref{ss:compare}
maps to the class 
\[
(0,0,1)\in(D_6\perp A_1)^*/(D_6\perp A_1)\hookrightarrow {\NS}^*/\NS,
\]
augmented by zeroes in ${\NS}^*/\NS$.
The N\'eron-Severi lattice
$\NS$ is thus enhanced by a divisor which only meets one reducible fiber of the alternative fibration
in a non-identity component
(corresponding to $A_1$).
If this divisor were a section,
then it would have height $h\geq 4-1/2=7/2$ by \ref{ss:ell}, but certainly not $3/2$.
Hence the lattice enhancement can only result in a fiber degeneration 
\[
A_1 \rightsquigarrow A_2
\]
on the alternative fibration.
Thus we find the enhanced N\'eron-Severi lattice
\begin{eqnarray}
\label{eq:6}
{\NS}' = U \perp 2D_6\perp A_1\perp A_2
\end{eqnarray}
in agreement with the generic transcendental lattice of the enhanced subfamily,
\[
T(X) = U(2) \perp A_1^2 \perp \langle 6\rangle  \;\;\; \text{corresponding to} \;\;\; \Delta=6.
\]
In order to determine the corresponding special curve in $\PP^2$ explicitly,
we assume without loss of generality
that the $I_2$ fiber with $E_{56}$ as non-identity component degenerates to Kodaira type $I_3$.
That is, there are two other smooth rational curves $D_1, D_2$ as fiber components.
Both give sections of the standard fibration,
meeting exactly the following fiber components:

\begin{center}
\begin{small}
\begin{tabular}{c|cc|cccccc}
singular fiber & $I_0^*$ & $I_0^*$ &$I_2$ & $I_2$ & $I_2$ & $I_2$ & $I_2$ & $I_2$ \\
\hline
component met & $E_{14}$ & $E_{23}$ & 
$  E_{34}$ & $  E_{35}'$ & $  E_{36}$ & $  E_{45}'$ & $  E_{46}$ & $  E_{56}'$\\
& non-id & id & id & non-id & id & id & non-id & non-id
\end{tabular}
\end{small}
\end{center}

 \begin{figure*}[ht!] \includegraphics[width=10cm]{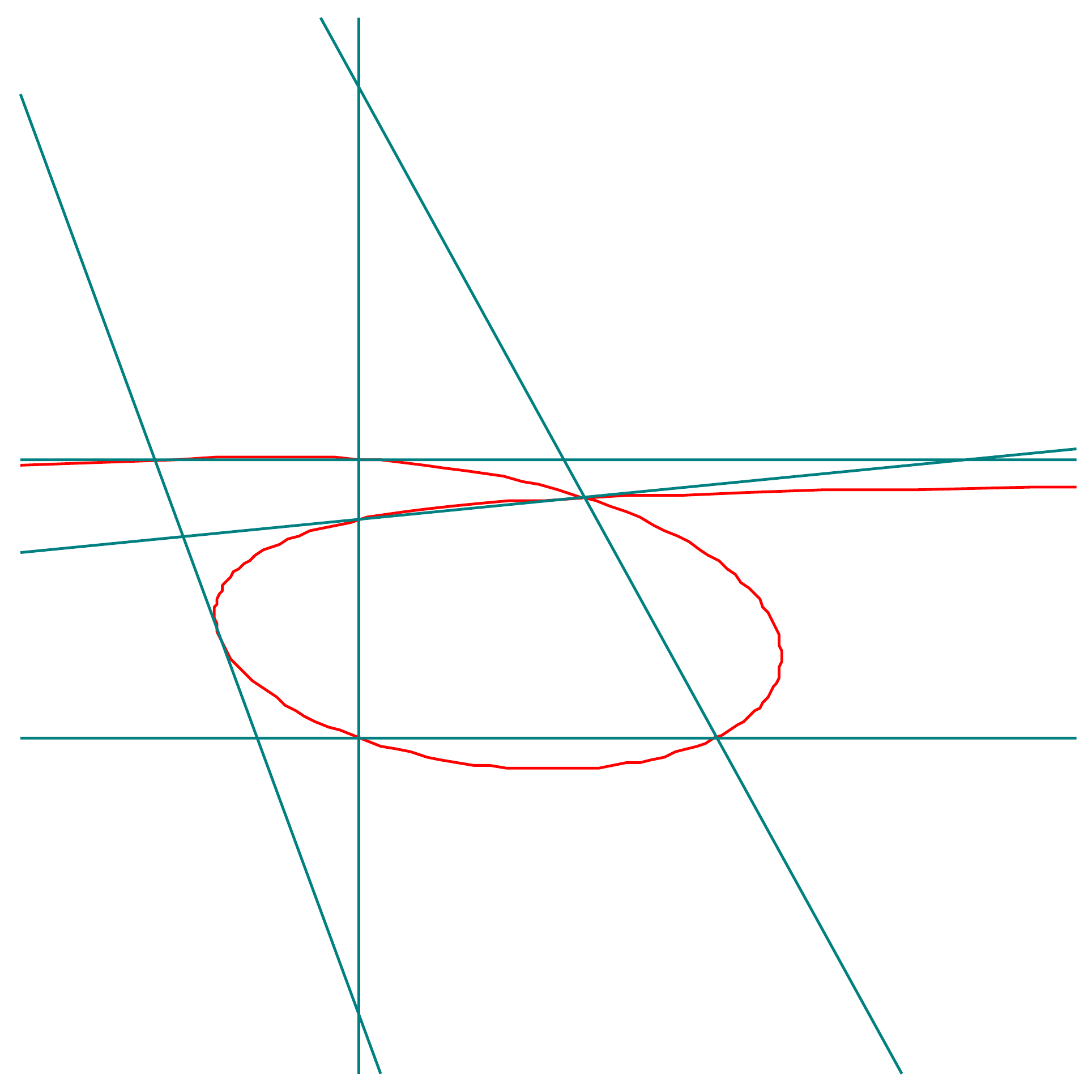}
 \caption{The 6 lines and the singular cubic}
 \end{figure*} 

One directly checks that the height pairing from \ref{ss:ell} gives $h(D_1)=h(D_2)=3/2$; 
in fact the sections are inverse to each other,
since they meet the same fiber components and $\langle D_1,D_2\rangle=-3/2$.
The intersection numbers with all other rational curves from the line arrangement
are zero except for $E_{12}$ and $\ell_5$
which are neither visible on the alternative fibration
nor fiber components of the standard fibration.
Here, since $\ell_5$ defines a 2-torsion section of the standard fibration,
the height pairing 
\[
0 = \langle D_1,\ell_5\rangle= 2 - D_1.\ell_5 - \underbrace{1/2}_{I_0^* \text{ at } E_{14}, E_{15}} - \underbrace{1/2}_{I_2 \text{ at } E_{46}}  \;\; \text{ gives } \;\; D_1.\ell_5=1,
\]
and likewise for $D_2$.
As for $E_{12}$, arguing with an auxiliary standard fibration such as the one induced by $\ell_1/\ell_4$,
we find 
\[
D_1.E_{12}=D_2.E_{12}=2.
\]
It follows that $D_1$ and $D_2$ correspond to a cubic curve $C\subset \PP^2$ of the following shape:
\begin{itemize}
\item
with a singularity at the node underlying $E_{12}$,
\item
through the nodes underlying $E_{14}, E_{23}, E_{34}, E_{36}, E_{46}, E_{56}$,
 \item
 meeting $\ell_5$ tangentially in a smooth point.
 \end{itemize}
 
 \section{The Kuga-Satake Construction}\label{sec:KS}

\subsection{Clifford Algebras} Let $V$ be a finite dimensional $k$-vector space equipped with  a non-degenerate bilinear form  $q$. Its tensor algebra is $\mathsf{T} V=\oplus _{p\ge 0}V^{\otimes p}$, where the convention is that $V^{(0)}=k$.
Recall   that the   \textbf{Clifford algebra} is the following quotient algebra of this algebra:
\[ 
\cl {}{V} =\cl {} {V,q}  := \mathsf{T} V /  \text{\rm ideal generated by }  \sett{x\otimes x- q(x,x)\cdot 1 }{x\in V} . 
\]  
Then we have $xy+yx= 2 q(x,y)$ in the algebra $\cl{} V$; in particular $x$ and $y$ anti-commute whenever they are orthogonal.

The Clifford algebra has  dimension $2^n$ where $n=\dim _k V$. Let us make this explicit for $k=\Q$. Then $Q$ can be    diagonalised   in  some  basis, say  $\set{e_1,\dots,e_n} $. Consider $\mathbf{a}=(a_1,\dots,a_n)\in \bF_2^n$. Taking all $2^n$ possibilities, we find a basis for $\cl {}V $ :
\[
e^{\mathbf a}:= e_1^{a_1}\cdots e_n^{a_n}.
\]
The even Clifford algebra $\cl  + V$ is generated by those $e^{\mathbf a}$ for which $\sum a_j$ is even. 

To describe Clifford algebras certain quaternion algebras play a role.  Let $F$ be a field and $a,b\in F^\times$. The quaternion algebra $(a,b)_F$ over a field $F$ has an $F$--basis
$\set{1, \mathbf{i}, \mathbf{j} ,\mathbf{k}}$ such that $\mathbf{i}^2=a, \mathbf{j}^2=b,\mathbf{i} \mathbf{j}=-\mathbf{j}\mathbf{i}=\mathbf{k}$. 
The Clifford algebra   $\cl {} {\qf{a}\perp\qf{b}}$, $a,b\in \Q^\times$  is isomorphic to $(a,b)_\Q$ while $\cl{+} {\qf{a}\perp\qf{b}}=\Q(\sqrt{-ab})$.  One can see  (cf. \cite{Schar}) that for rank $3$, $4$ and $5$ the results are:
\begin{lemma} \label{CalcGem} Suppose $Q=$ \text{\rm diag}$(a_1,\dots,a_m)$.  Put $d=(-)^m a_1\cdots a_m$. Then
\begin{enumerate}
\item For $m=3$ we have $\cl {+} Q= (-a_1a_2,-a_2a_3)_\Q$;

\item For $m=4$ we have  $\cl {+} Q=  (-a_1a_2,-a_2a_4)_\Q\otimes F$ with $F=\Q\sqrt{d}$;
\item For $m=5$ we have  
$\cl {+} Q=   (-a_1a_2,-a_2a_3)_\Q\otimes_\Q(a_1a_2a_3a_4,-a_4a_5)_\Q$.
\end{enumerate}
\end{lemma}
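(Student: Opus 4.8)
The plan is to realise each even Clifford algebra concretely: I would exhibit inside $\cl{+}{Q}$ explicit elements built from the products $e_ie_j$ that satisfy the defining relations of the asserted quaternion algebras, and then conclude by a dimension count together with (semi)simplicity. Throughout I rely only on the basic relations $e_ie_j=-e_je_i$ for $i\neq j$ and $e_i^2=a_i$, so that two even-length products of the orthogonal $e_i$ commute or anticommute according to the parity of the number of shared transpositions. Recall that $\cl{+}{Q}$ has $\Q$-dimension $2^{m-1}$, with basis the even-weight monomials $e^{\mathbf a}$; this is the number against which I will match dimensions.

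First I would treat $m=3$, the model case. Setting $\mathbf{i}=e_1e_2$ and $\mathbf{j}=e_2e_3$ one computes $\mathbf{i}^2=-a_1a_2$, $\mathbf{j}^2=-a_2a_3$ and $\mathbf{i}\mathbf{j}=-\mathbf{j}\mathbf{i}$, so the $\Q$-subalgebra they generate is a homomorphic image of $(-a_1a_2,-a_2a_3)_\Q$. Since a quaternion algebra is simple and both sides have $\Q$-dimension $4$, the natural map is an isomorphism. (For odd $m$ the pseudoscalar $e_1\cdots e_m$ lies in the odd part, so $\cl{+}{Q}$ is central simple over $\Q$; this is exactly what makes the dimension count conclusive.)

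For $m=5$ I would produce two mutually centralising quaternion subalgebras. The first is the rank-$3$ algebra above, generated by $e_1e_2,e_2e_3$, giving $(-a_1a_2,-a_2a_3)_\Q$. For the second I would take $u=e_1e_2e_3e_4$ and $v=e_4e_5$; a sign count gives $u^2=a_1a_2a_3a_4$, $v^2=-a_4a_5$ and $uv=-vu$, so $\{u,v\}$ generate $(a_1a_2a_3a_4,-a_4a_5)_\Q$. The key verification is that this second pair centralises the first: $v=e_4e_5$ commutes with $e_1e_2,e_2e_3$ on disjoint indices, while $u=(e_1e_2e_3)e_4$ commutes with them because $e_4$ commutes with any product in $e_1,e_2,e_3$ and the rank-$3$ pseudoscalar $e_1e_2e_3$ is central in the Clifford algebra of $\langle e_1,e_2,e_3\rangle$. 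The two centralising central-simple subalgebras induce a $\Q$-algebra map $(-a_1a_2,-a_2a_3)_\Q\otimes_\Q(a_1a_2a_3a_4,-a_4a_5)_\Q\to\cl{+}{Q}$; the source is central simple over $\Q$, so the map is injective, and since both sides have dimension $16$ it is an isomorphism.

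For $m=4$ the same mechanism applies, but now the centre intervenes, and this is where I expect the only real subtlety. The element $z=e_1e_2e_3e_4$ anticommutes with each $e_i$, hence is central in $\cl{+}{Q}$, with $z^2=d$, so $F=\Q[z]\cong\Q\sqrt{d}$ is generically the centre. Taking $\mathbf{i}=e_1e_2$, $\mathbf{j}=e_2e_4$ one gets $\mathbf{i}^2=-a_1a_2$, $\mathbf{j}^2=-a_2a_4$, $\mathbf{i}\mathbf{j}=-\mathbf{j}\mathbf{i}$, and these commute with $z$; this yields a map $(-a_1a_2,-a_2a_4)_\Q\otimes_\Q F\to\cl{+}{Q}$, with both sides of $\Q$-dimension $8$. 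When $d$ is not a square, $F$ is a field, the source is a quaternion algebra over $F$ and hence simple, so injectivity plus the dimension count finish the argument. The genuinely delicate point, and the main obstacle, is the degenerate case in which $d$ is a square: then $F\cong\Q\times\Q$ and $\cl{+}{Q}$ splits as a product of two blocks, so one must either check the map respects the two central idempotents or, more cleanly, argue surjectivity directly from the fact that $e_1e_2,\,e_2e_4,\,z$ and their products already span all eight even basis monomials. Granting this, the isomorphism holds in all three ranks, which is the assertion of the lemma; the remaining labour is only the routine sign bookkeeping in the anticommutation and centralising computations indicated above (compare \cite{Schar}).
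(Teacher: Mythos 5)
Your proof is correct, and it is genuinely a different route from the paper's, for the simple reason that the paper offers no argument at all: Lemma~\ref{CalcGem} is stated there with a bare citation to Scharlau, where these rank $3$, $4$, $5$ descriptions are instances of the general structure theory of even Clifford algebras (odd rank: central simple over $\Q$; even rank: centre $\Q(\sqrt{d})$). What you do is reconstruct that theory by hand for the three ranks: exhibit explicit even elements satisfying the quaternion relations, get a well-defined map out of the asserted algebra, deduce injectivity from simplicity of the source, and finish by the dimension count $\dim_\Q \cl{+}{Q}=2^{m-1}$. Your sign bookkeeping is right in every case I checked: $u=e_1e_2e_3e_4$ squares to $+a_1a_2a_3a_4$, anticommutes with $v=e_4e_5$, and commutes with $e_1e_2$ and $e_2e_3$; $z=e_1e_2e_3e_4$ anticommutes with each $e_i$ and hence is central in the even part with $z^2=d$. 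Two points of care. First, your phrase ``$e_4$ commutes with any product in $e_1,e_2,e_3$'' is only true for \emph{even-length} products (it anticommutes with odd ones); since you apply it to $e_1e_2$ and $e_2e_3$, the conclusion stands, but the wording should be fixed. Second, in the $m=4$ case your fallback for square discriminant is the right move and does work: $1,\mathbf{i},\mathbf{j},\mathbf{i}\mathbf{j},z,\mathbf{i}z,\mathbf{j}z,\mathbf{i}\mathbf{j}z$ are nonzero scalar multiples of the eight even basis monomials $1,e_1e_2,e_2e_4,e_1e_4,e_1e_2e_3e_4,e_3e_4,e_1e_3,e_2e_3$, so the map is surjective and hence an isomorphism by dimension --- and in fact this surjectivity argument works uniformly, so you could drop the case distinction on whether $d$ is a square altogether. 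What the citation buys the paper is brevity; what your argument buys is a self-contained verification that pins down the exact quaternion symbols $(-a_1a_2,-a_2a_3)_\Q$, $(-a_1a_2,-a_2a_4)_\Q\otimes F$, $(a_1a_2a_3a_4,-a_4a_5)_\Q$ of the statement, not merely their Brauer classes.
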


\subsection{From Certain Weight $2$ Hodge Structures to Abelian Varieties} 
Next suppose that $(V,q)$ carries a  weight $2$ Hodge structure  polarized by $q$ with $h^{2,0}=1$.  
Then    $V^{2,0}\oplus V^{0,2} $ is the complexification of a real plane $W\subset V$ carrying a Hodge substructure and $q$ polarizes it. Then $b(x,y):= -q(x,  y)$ is a  metric on this plane  since  $C=-1$ is the  Weil-operator
\footnote{Recall that $C$ is defined by $C|H^{p,q}=\ii^{p-q}$.} of this Hodge structure. A choice of orientation for $W$ then defines a unique almost complex structure  which is the rotation over $\pi/2$ in the positive direction. Equivalently, this almost complex structure is determined by any positively oriented orthonormal basis $\set{f_1,f_2}$ for $W$. Such a choice also defines an almost complex structure $J=f_1f_2$ on $\cl +V  $   
since $f_1f_2f_1f_2=  -f_1^2f_2^2 =-1$. Then  $J$ defines a  weight $1$ Hodge structure: the eigenspaces of $J$ for the eigenvalues $\pm\ii$ are the Hodge summands $H^{1.0}$, respectively $H^{0,1}$.  

It turns out that the Hodge structure is polarized by a very natural skew form 
\[
E:  \cl + V \times \cl + V\to \Q,\quad (x,y)\mapsto \tr (\epsilon \iota (x)y)
\]
built out of the canonical involution
\[
\iota: \cl + V \to \cl + V,\quad e_1^{a_1}\cdots e_n^{a_n}\mapsto e_1^{a_n}\cdots e_n^{a_1}, 
\]
the trace map
\[
\tr: \cl + V \to \Q,\quad c\mapsto \tr(R_c),\, R_c: x \mapsto cx,\,\text{(left multiplication by } c)
\]
and $\epsilon \in \cl + V$ any element with $\iota\comp \epsilon =-\epsilon$, for instance $\epsilon=e_1e_2$.
If we set 
\[
U:= \text{ vector space dual to } \cl + V,
\]
any  choice of a free $\Z$-module $U_\Z$  makes $U/U_\Z$ into a complex torus which is polarized by $E$. This  Abelian variety by definition is the \textbf{Kuga Satake variety} and is denoted by $A(V,q)$. It is an Abelian variety of dimension $2^{n-2}$, half of the real dimension of $\cl + V$. 

Recall (e.g. \cite{sata}):
\begin{thm}\label{endsdecompo} One has $ \cl{+}{V}\subset \End_\Q(A(V,q))$. If 
\begin{equation*}
\cl{+}{V}=M_{n_1}(D_1)\times\cdots\times M_{n_d}(D_d),\, D_j  \text{ division algebra, } j=1,\dots,d,
\end{equation*}
then we have a decomposition into simple polarized Abelian varieties (here $\sim$ denotes isogeny):
\begin{eqnarray}
\label{eq:5.2}
A(V,q) \sim A_1^{n_1}\times\cdots \times A_d^{n_d},\, A_j \;\;  \text{with } D_j\subset \End_\Q(A_j), j=1,\dots,d.
\end{eqnarray}
\end{thm}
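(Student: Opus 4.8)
The plan is to realise $\cl{+}{V}$ inside $\End_\Q(A(V,q))$ as an algebra of multiplication operators, and then to read off the isogeny decomposition from the Wedderburn structure of this semisimple algebra by splitting $A(V,q)$ along a suitable complete set of orthogonal idempotents.

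First I would prove the embedding $\cl{+}{V}\subset \End_\Q(A(V,q))$. Since $H_1(A(V,q);\Q)=\cl{+}{V}$ carries the complex structure $J$ given by left multiplication by $f_1f_2$, every right multiplication $\rho_c\colon x\mapsto xc$ (for $c\in\cl{+}{V}$) commutes with $J$, because left and right multiplications in an associative algebra always commute. Hence each $\rho_c$ respects the Hodge structure and defines an element of $\End_\Q(A(V,q))$. As $c\mapsto\rho_c$ is only an anti-homomorphism, I would compose it with the canonical anti-involution $\iota$ of $\cl{+}{V}$: the map $c\mapsto\rho_{\iota(c)}$ is then a genuine algebra homomorphism, and it is injective since the right regular representation is faithful. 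I would also record that $\iota$ is precisely the Rosati involution attached to the polarisation $E=\tr(\epsilon\,\iota(x)y)$, which is what makes this embedding compatible with $E$ and lets the summands below inherit polarisations.

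Next, using that $\cl{+}{V}$ is a semisimple $\Q$-algebra, I would feed the Wedderburn decomposition $\cl{+}{V}=\prod_j M_{n_j}(D_j)$ into the standard idempotent splitting of abelian varieties up to isogeny (an idempotent of $\End_\Q(A)$ cuts out a subvariety up to isogeny, and a complete set of orthogonal idempotents summing to $1$ decomposes $A$ as the corresponding product). Concretely, the central idempotents $\pi_j$ first split $A(V,q)\sim\prod_j B_j$ with $H_1(B_j;\Q)=\pi_j\cl{+}{V}=M_{n_j}(D_j)$; within each factor the diagonal matrix units $e^{(j)}_{11},\dots,e^{(j)}_{n_jn_j}$ give $n_j$ orthogonal idempotents which are mutually conjugate by permutation units, so their images are pairwise isogenous. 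Writing $A_j$ for this common isogeny type, I obtain $B_j\sim A_j^{n_j}$ and hence $A(V,q)\sim\prod_j A_j^{n_j}$. Finally $\End_\Q(A_j)\supseteq e^{(j)}_{11}M_{n_j}(D_j)e^{(j)}_{11}\cong D_j$, which is the asserted inclusion $D_j\subset\End_\Q(A_j)$; choosing the $e^{(j)}_{ii}$ symmetric for the Rosati involution makes these factors polarised.

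The step I expect to be the main obstacle is the \emph{simplicity} of the factors $A_j$. The idempotent argument only shows that $A_j$ corresponds to a simple $\cl{+}{V}$-module; to conclude that $A_j$ is simple as an abelian variety one must rule out further idempotents, i.e.\ show that $\End_\Q(A_j)$ is exactly the division algebra $D_j$. This is the genuinely Hodge-theoretic input: one has to know that the commutant of the Kuga--Satake Hodge structure is exactly $\cl{+}{V}$, equivalently that the spin representation realising $H_1(A(V,q))$ is no more reducible than the algebra forces. This is precisely what Satake's analysis of the Mumford--Tate group of $A(V,q)$ provides, so I would invoke \cite{sata} for it rather than reprove it; granting it, $\End_\Q(A_j)=D_j$ admits no nontrivial idempotents and $A_j$ is simple.
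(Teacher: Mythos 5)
Your argument is sound, but there is no internal proof in the paper to measure it against: the authors introduce Theorem~\ref{endsdecompo} with ``Recall (e.g.\ \cite{sata})'' and import it wholesale from Satake, so your reconstruction supplies strictly more than the paper does. What you give is the standard proof, and it works: right multiplications $\rho_c$ commute with the left-multiplication complex structure $J=f_1f_2$, and twisting the anti-homomorphism $c\mapsto\rho_c$ by the canonical anti-involution $\iota$ yields an embedding $\cl{+}{V}\hookrightarrow \End_\Q(A(V,q))$ (note that the paper builds $A(V,q)$ on the \emph{dual} of $\cl{+}{V}$, which transposes the action; your $\iota$-twist absorbs exactly this), after which the central idempotents and the matrix units of the Wedderburn decomposition split $A(V,q)$ up to isogeny as claimed, with $e^{(j)}_{11}M_{n_j}(D_j)e^{(j)}_{11}\cong D_j\subset\End_\Q(A_j)$. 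Two remarks. First, the Rosati involution attached to $E(x,y)=\tr(\epsilon\,\iota(x)y)$ is $c\mapsto \epsilon\,\iota(c)\,\epsilon^{-1}$ rather than $\iota$ itself; this is harmless for your purposes, since each abelian subvariety is polarized simply by restricting the polarization. Second, your diagnosis of the simplicity of the $A_j$ as the one genuinely Hodge-theoretic step is exactly right, and deferring it to \cite{sata} is not a shortcut but a necessity: simplicity amounts to $\End_\Q(A(V,q))$ being no larger than $\cl{+}{V}$, which can fail for special (e.g.\ CM) Hodge structures and is precisely the Mumford--Tate content of Satake's theorem --- the same content that the paper's bare citation silently carries.
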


\subsection{Abelian Varieties of Weil Type}
 
 In this note we are mainly interested in the following two examples which arise as certain Kuga-Satake varieties. The first class of examples is related to Abelian varieties in the full moduli space $\tilde{\mathbf{M}}$, and they are described by the following proposition.

\begin{prop}[\protect{\cite[Theorem 6.2]{Lom}}] Let $V$ be a rational vector space of dimension $6$ and let $q=\qf {2}\oplus \qf {2}\oplus \qf {-2}\oplus \qf {-2}\oplus  \qf {-a}\oplus \qf {-b}$,   $a,b\in \Q^+$. We put $\alpha=\ii \sqrt{ab}$.

Suppose that $(V,q)$ is a polarized weight $2$ Hodge structure with $h^{2,0}=1$. Then the Kuga-Satake variety $A(V,q)$ is a $16$-dimensional Abelian variety of $\Q(\alpha)$-Weil-type. For a generic such Hodge structure $\End_\Q(A(V,q))=M_4(\Q(\alpha))$.
\end{prop}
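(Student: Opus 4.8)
The plan is to reduce the whole statement to an explicit identification of the even Clifford algebra $\cl{+}{V,q}$, and then to feed this into Theorem~\ref{endsdecompo} together with the standard genericity properties of the Kuga--Satake construction. The dimension claim is immediate: with $n=\dim_\Q V=6$ the variety $A(V,q)$ has dimension $2^{n-2}=16$ by the construction recalled in \S~\ref{sec:KS}. So the real work is to prove $\cl{+}{V,q}\cong M_4(\Q(\alpha))$, to deduce that the induced $\Q(\alpha)$-action is of Weil type (necessarily $(8,8)$), and finally to see that for a generic Hodge structure the inclusion $\cl{+}{V,q}\subset\End_\Q A(V,q)$ is an equality.

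First I would compute $\cl{+}{V,q}$. Splitting off the last basis vector $e_6$ (with $q(e_6,e_6)=-b$) gives the usual isomorphism of the even Clifford algebra of a rank $6$ form with the full Clifford algebra of the complementary rank $5$ form scaled by $-q(e_6,e_6)=b$, namely
\[
\cl{+}{V,q}\;\cong\;\cl{}{U},\qquad U=\qf{2b}\perp\qf{2b}\perp\qf{-2b}\perp\qf{-2b}\perp\qf{-ab}.
\]
Since $U$ has odd rank, $\cl{}{U}\cong\cl{+}{U}\otimes_\Q Z$, where the center $Z=\Q(\varpi)$ is generated by the volume element $\varpi=e_1\cdots e_5$ with $\varpi^2=(2b)^2(-2b)^2(-ab)=-16ab^{5}\equiv-ab$ modulo squares; hence $Z=\Q(\sqrt{-ab})=\Q(\alpha)$. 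The central factor $\cl{+}{U}$ is given by Lemma~\ref{CalcGem}(3): writing $U=\qf{c_1}\perp\dots\perp\qf{c_5}$ one finds $-c_1c_2=-4b^2\equiv-1$, $-c_2c_3=4b^2\equiv1$ and $c_1c_2c_3c_4=16b^4\equiv1$, so both quaternion factors are split,
\[
\cl{+}{U}\cong(-1,1)_\Q\otimes_\Q(1,-c_4c_5)_\Q\cong M_2(\Q)\otimes_\Q M_2(\Q)\cong M_4(\Q).
\]
Therefore $\cl{+}{V,q}\cong M_4(\Q)\otimes_\Q\Q(\alpha)\cong M_4(\Q(\alpha))$, and pleasantly this holds for all $a,b\in\Q^+$, not merely generically.

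Next I would establish the Weil type. The center $\Q(\alpha)=Z(\cl{+}{V,q})$, which in $\cl{+}{V}$ is generated by the volume element $e_1\cdots e_6$, lies in $\End_\Q A(V,q)$ by Theorem~\ref{endsdecompo}. For the polarization compatibility $E(\alpha x,\alpha y)=|\alpha|^2E(x,y)$ it suffices that the canonical involution $\iota$ reverses $e_1\cdots e_6$ with sign $(-1)^{\binom{6}{2}}=-1$, so that $\iota(\alpha)=-\alpha=\bar\alpha$; substituting into $E(x,y)=\tr(\epsilon\,\iota(x)y)$ and using $\alpha^2=-ab$ gives $E(\alpha x,\alpha y)=-\alpha^2 E(x,y)=|\alpha|^2 E(x,y)$. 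For the type, I would exploit the algebra structure just found: since $\cl{+}{V,q}\cong M_4(\Q(\alpha))$ with $\Q(\alpha)$ a field, Theorem~\ref{endsdecompo} yields an isogeny $A(V,q)\sim A_1^4$ with $\Q(\alpha)\subset\End_\Q A_1$, $\dim A_1=4$, and the type of $A(V,q)$ is four times that of $A_1$. It then remains to check that $\alpha$ acts on $T_0A_1$ with evenly split eigenvalues, i.e.\ $A_1$ has type $(2,2)$. This I would read off from the explicit complex structure $J=f_1f_2$ on $\cl{+}{V}_\R$ together with the $\C$-action of the central $\alpha$: the two commuting complex structures split $H^{1,0}$ into its $\alpha$- and $\bar\alpha$-eigenspaces, and the signature $(2,4)$ of $q$ forces these to have equal dimension (equivalently, $A_1$ realizes the weight $1$ structure inverse to the construction $A\mapsto T(A)$ of \S~\ref{sec:Tconstruct}, compatibly with Theorem~\ref{Lombardo}). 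This balancing of the eigenspaces is the one genuinely delicate point, and is the step I expect to be the main obstacle, since it does not follow from the algebra alone but uses the Hodge-theoretic input of the signature.

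Finally, for the generic endomorphism algebra one always has $\cl{+}{V,q}\subset\End_\Q A(V,q)$, so $M_4(\Q(\alpha))\subseteq\End_\Q A(V,q)$. For the reverse inclusion I would invoke the standard genericity of the Kuga--Satake construction: for a very general Hodge structure in the family the Mumford--Tate group is the full spin (equivalently special orthogonal) group of $(V,q)$, whose commutant in $\End(\cl{+}{V})$ is exactly $\cl{+}{V,q}$ acting by right multiplication. Hence $\End_\Q A(V,q)=\cl{+}{V,q}\cong M_4(\Q(\alpha))$ for generic $(V,q)$, which completes the proof.
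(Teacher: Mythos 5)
Your overall route --- identify $\cl{+}{V,q}$ explicitly and feed it into Theorem~\ref{endsdecompo} --- is the same strategy the paper uses for its own Kuga--Satake statement, Theorem~\ref{KSSpecialCase}; for the present proposition the paper gives no proof at all, quoting it from \cite[Theorem 6.2]{Lom}, so that is the only internal point of comparison. Your Clifford computation is correct and complete: splitting off $e_6$ gives $\cl{+}{V,q}\cong\cl{}{U}$ with $U=\qf{2b}\perp\qf{2b}\perp\qf{-2b}\perp\qf{-2b}\perp\qf{-ab}$, the odd-rank volume element produces the centre $\Q(\sqrt{-ab})=\Q(\alpha)$, Lemma~\ref{CalcGem}(3) splits both quaternion factors, and hence $\cl{+}{V,q}\cong M_4(\Q(\alpha))$ for \emph{all} $a,b\in\Q^+$, so that genericity enters only in the equality of endomorphism algebras, as you note. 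The polarization identity $E(\alpha x,\alpha y)=|\alpha|^2E(x,y)$, deduced from $\iota(e_1\cdots e_6)=-e_1\cdots e_6$ and centrality, is also correct, and the Mumford--Tate/commutant argument for the generic equality $\End_\Q(A(V,q))=\cl{+}{V,q}$ is the standard one (with the caveat that the commutant must be taken of $\mathrm{CSpin}(V)$ acting by left Clifford multiplication; the spin representation does not factor through the special orthogonal group, so ``equivalently special orthogonal'' is not accurate).

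The genuine gap is exactly the step you flag yourself: that $\alpha$ acts on the tangent space with $\alpha$- and $\bar\alpha$-eigenspaces of equal dimension. ``The signature $(2,4)$ of $q$ forces these to have equal dimension'' is an assertion, not an argument: complex conjugation only shows that multiplicities $(p,q)$ on $H^{1,0}$ become $(q,p)$ on $H^{0,1}$, which puts no constraint whatsoever on $p-q$; and the parenthetical fallback via \S~\ref{sec:Tconstruct} and Proposition~\ref{Lombardo} is both restricted to $\alpha=\ii$ and circular, since in \cite{Lom} that result (Theorem 6.5) is deduced from the very statement you are proving (Theorem 6.2). The missing argument is, however, only a short trace computation, convention-independent and purely internal to your setup. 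Put $\eta:=e_1\cdots e_6$; it is central in $\cl{+}{V}$, satisfies $\eta^2=-16ab$, and realizes $\alpha$ as $\eta/4$. Since $f_1\perp f_2$, the element $J\eta=f_1f_2\,\eta$ is a real linear combination of basis monomials $e^{\mathbf{a}}$ with $|\mathbf{a}|=4$; because $e^{\mathbf{a}}e^{\mathbf{b}}$ is a nonzero multiple of $e^{\mathbf{b}}$ only when $\mathbf{a}=0$, left multiplication by $J\eta$ on $\cl{+}{V}\otimes\R$ has vanishing diagonal, so $\tr(J\eta)=0$. Computing the same trace from the weight-one Hodge decomposition, where $J$ acts by $\ii$ on $H^{1,0}$, $\eta$ acts there with eigenvalues $4\alpha$ (multiplicity $p$) and $4\bar\alpha$ (multiplicity $q$), and $H^{0,1}$ contributes the conjugate eigenvalues, gives
\[
0=\tr(J\eta)=\bigl(4p\,\ii\alpha+4q\,\ii\bar\alpha\bigr)+\overline{\bigl(4p\,\ii\alpha+4q\,\ii\bar\alpha\bigr)}=8(q-p)\sqrt{ab},
\]
whence $p=q=8$, i.e.\ $A(V,q)$ is of $\Q(\alpha)$-Weil type; the same count passes to the dual space used in the paper's construction of $A(V,q)$ and to each isogeny factor $A_1$. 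With this computation inserted in place of the appeal to the signature, your proposal is a complete proof.
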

In the situation of  Theorem \ref{endsdecompo} we have $d=1$ and $n_1=4$, i.e. : 
\[
A(V,q)  \sim B^4, \, B \text{ simple 4-dim. Abelian variety with } \Q(\alpha)\subset \End_\Q(B). 
\]
On the other hand, the procedure of  \S~\ref{sec:Tconstruct}  describes how one may associate to any Abelian variety $A$ of dimension $4$ of  $\Q(\ii)$--Weil-type a polarized Hodge substructure $T(A)$ of $H^2(A)$  with $h^{2,0}=1$. We have:
\begin{prop}[\protect{\cite[Theorem 6.5]{Lom}}]  \label{Lombardo} For an Abelian variety $A$ of dimension 4 which is of $\Q(\ii)$--Weil type we have
\[A(T(A))  \sim A^4,\]
in other words, the Kuga-Satake procedure applied to  $T(A)$  gives back the original Abelian variety $A$ up to isogeny.
\end{prop}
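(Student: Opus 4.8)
The plan is to use the Clifford-algebraic results already established to reduce the assertion to a single statement about weight one Hodge structures, and then to prove that statement via the spin representation.

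First I would determine the rational quadratic form carried by $T(A)$. As a lattice $T(A)$ is of the fixed type of signature $(2,4)$ studied in \S~\ref{sec:IsoCl}, and over $\Q$ one checks (using $\qf{2}\oplus\qf{2}\cong_\Q\qf{1}\oplus\qf{1}$ and the analogous statement for the negative part) that its form is isometric to $\qf{2}\oplus\qf{2}\oplus\qf{-2}\oplus\qf{-2}\oplus\qf{-1}\oplus\qf{-1}$. This is exactly the form of the preceding Proposition (\cite[Theorem 6.2]{Lom}) with $a=b=1$, whence $\alpha=\ii\sqrt{ab}=\ii$ and $\cl{+}{T(A)}\cong M_4(\Q(\ii))$ as a $\Q$--algebra. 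Since the even Clifford algebra depends only on the rational quadratic form and not on the Hodge structure, this holds for \emph{every} $A$ of $\Q(\ii)$--Weil type. Feeding $d=1$, $n_1=4$, $D_1=\Q(\ii)$ into Theorem~\ref{endsdecompo} gives
\[
A(T(A))\sim B^4,\qquad \dim B=4,\quad \Q(\ii)\subset\End_\Q(B).
\]
It therefore suffices to identify $H_1(B)$ with $H_1(A)=V$ as polarized weight one Hodge structures, for this forces $B\sim A$ and hence $A(T(A))\sim A^4$.

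For this identification I would work at the level of the Clifford module. The abelian variety $A(T(A))$ is built from $\cl{+}{T(A)}\cong M_4(\Q(\ii))$ with the complex structure $J=f_1f_2$, and $H_1(B)$ is the simple module $\Q(\ii)^4$ equipped with the restriction of $J$. The key structural input is the classical isomorphism $\su{(2,2)}\cong\so{T_\Q}{+}$ of \S~\ref{sec:IsoCl} (Lemma~\ref{ClassicalIsom}), realised by $g\mapsto\mathsf{\Lambda}^2 g$: under it the spin representation attached to $\cl{+}{T(A)}$ becomes the standard four--dimensional representation of $\su{(2,2)}$ on $V$, where $V=H_1(A)$ is regarded as a $\Q(\ii)$--vector space via multiplication by $\ii\in\End(A)$, and $T(A)\subset\mathsf{\Lambda}^2_\C V^*$ sits inside the exterior square as in \S~\ref{sec:Tconstruct}. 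Thus the simple $\cl{+}{T(A)}$--module is canonically $V$, and everything comes down to checking that the two complex structures on $V$ agree.

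The hard part will be precisely this compatibility of complex structures: that the Kuga--Satake structure $J=f_1f_2$, built from an oriented orthonormal basis of the real plane $W=\bigl(T(A)^{2,0}\oplus T(A)^{0,2}\bigr)\cap T(A)_\R$, coincides up to the $\su{(2,2)}$--action with the complex structure $J_A$ defining $A$. Here I would exploit that $W$ is the period point of $T(A)$ in $\mathbf{D}_4$, and that under $\mathbf{D}_4\cong\mathbf{H}_2$ this point corresponds (Remark~\ref{CharactH2}) to the complex two--plane $P\subset V_\C$ with $(\ii f)|P>0$ that defines $J_A$ by $J_A|P=\ii\,\mathbf{1}$ and $J_A|P^\perp=-\ii\,\mathbf{1}$ (see \S~\ref{ssec:SAV}). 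Once the orientation and the sign implicit in $J=f_1f_2$ are matched against this description, the two structures become equal, giving $H_1(B)\cong H_1(A)$ and therefore $A(T(A))\sim A^4$. The genuine difficulty, and the one place where the spin isomorphism of \S~\ref{sec:IsoCl} must be used quantitatively rather than merely as an abstract isomorphism of groups, lies in this tracking of orientations and signs through $J=f_1f_2$.
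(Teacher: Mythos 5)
Your reduction steps are sound, and they correctly assemble the ingredients the paper itself quotes: over $\Q$ one has $U\cong \qf{2}\perp\qf{-2}$, so $T(A)_\Q$ is indeed the form of the preceding Proposition with $a=b=1$; the even Clifford algebra is a similarity invariant of the rational quadratic form, so $\cl{+}{T(A)}\cong M_4(\Q(\ii))$ for \emph{every} $A$ of $\Q(\ii)$--Weil type; Theorem~\ref{endsdecompo} then gives $A(T(A))\sim B^4$ with $\dim B=4$ and $\Q(\ii)\subset\End_\Q(B)$; and it is correct that an isomorphism $H_1(B)\cong H_1(A)$ of rational Hodge structures would finish the proof. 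Note, for calibration, that the paper offers no proof of this Proposition at all (it is quoted from \cite[Theorem 6.5]{Lom}), so the only question is whether your argument is complete.

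It is not: the step you label "the hard part" is the entire content of the theorem, and you assert it rather than prove it. Three things are missing or wrong. First, your stated target, that $J=f_1f_2$ "coincides up to the $\su{(2,2)}$--action" with $J_A$, is too weak: if $\rho(J_W)=gJ_{A,W}g^{-1}$ for some real $g\in\su{(2,2);\R}$, then $gJ_{A,W}g^{-1}$ is exactly the complex structure $J_{A,g\cdot W}$ attached to the period point $g\cdot W\in\mathbf{H}_2$, so you would only conclude $B\sim A_{g\cdot W}$, and for real (non-rational) $g$ this abelian variety is in general \emph{not} isogenous to $A$. What is needed is equality on the nose of the two maps $W\mapsto \rho(J_W)$ and $W\mapsto J_{A,W}$ from $\mathbf{D}_4\cong\mathbf{H}_2$ to complex structures on $V_\R$. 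Second, the mechanism that makes this provable by a finite check is absent from your plan: both maps are equivariant under $\su{(2,2);\R}$ (on the Kuga--Satake side because $gf_1g^{-1},gf_2g^{-1}$ is an oriented orthonormal basis of the transported plane; on the abelian-variety side by the equivariance of $\mathbf{H}_2\cong\mathbf{D}_4$), and the domain is connected and homogeneous, so equality at one point implies equality everywhere; one must then actually carry out the computation at a single period point in the explicit bases $\{a_i,b_i\}$, $\{f_j\}$, $\{g_j\}$ of \S~\ref{sec:IsoCl}. Neither the rigidity argument nor the one-point computation appears. Third, the sign you propose to "match" is not bookkeeping: a priori $\rho(J_W)$ is merely some square root of $-\mathbf{1}$ in $\su{(2,2);\R}$, and if the computation produced $-J_{A,W}$ one would obtain $A(T(A))\sim \bar{A}^4$, where the conjugate abelian variety $\bar{A}$ corresponds to a different period point and is \emph{not} isogenous to $A$ for generic Weil-type $A$. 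Ruling that out is precisely what the omitted computation accomplishes, so the proposal reduces the theorem to its own crux without resolving it.
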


The second class relates to the Abelian varieties in the hypersurfaces $D_\Delta$ of the moduli space $\mathbf{M}$.   We recall equation ~\eqref{eqn:OverQ}  where we found   the generic transcendental subspace
 $(T_\Delta)_\Q\subset (T(A_\Delta))_\Q$ of such an Abelian variety $A_\Delta$. 

 We have: 
\begin{thm} \label{KSSpecialCase}
Let $T_\Delta= \qf {2\Delta}\perp   U\perp \qf {-2}^2$ be a polarized Hodge structure of type $(1,3,1)$  and let $A(T_\Delta)$ be its associated Kuga-Satake variety. If  $A_\Delta$ is any   Abelian variety with moduli point in  
the hypersurface  $D_\Delta$ of the moduli space $\mathbf{M}$ such that 
 $T(A_\Delta)=T_\Delta$ as polarized $\Q$--Hodge structures, then  we have an isogeny
\[A(T_\Delta)  \sim A_\Delta^2, \quad  (-1,\Delta)_\Q\subset \End_{\Q}\left(A_\Delta  \right).
\]
\end{thm}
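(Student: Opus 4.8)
The plan is to compute the even Clifford algebra of $(T_\Delta,q)$ explicitly and then read off the isogeny type from Theorem~\ref{endsdecompo}. First I would diagonalise the form over $\Q$: since $U\otimes\Q\cong\qf{2}\perp\qf{-2}$, the lattice $T_\Delta=\qf{2\Delta}\perp U\perp\qf{-2}^2$ becomes diagonal with entries $(a_1,\dots,a_5)=(2\Delta,2,-2,-2,-2)$, of signature $(2,3)$ as it must be. Applying Lemma~\ref{CalcGem}(3) to these values gives
\[
\cl{+}{T_\Delta}\cong(-4\Delta,4)_\Q\otimes_\Q(16\Delta,-4)_\Q.
\]

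The second step is a routine reduction of the two quaternion symbols modulo squares. In the first factor $4=2^2$ is a square, so $(-4\Delta,4)_\Q\cong M_2(\Q)$ is split; in the second, $16\Delta\equiv\Delta$ and $-4\equiv-1$ modulo squares, whence $(16\Delta,-4)_\Q\cong(-1,\Delta)_\Q$. Writing $D:=(-1,\Delta)_\Q$, I obtain
\[
\cl{+}{T_\Delta}\cong M_2(\Q)\otimes_\Q D\cong M_2(D).
\]
Feeding this into Theorem~\ref{endsdecompo} with $d=1$, $n_1=2$ and $D_1=D$ produces an isogeny $A(T_\Delta)\sim A_1^2$ in which $A_1$ is simple and $D=(-1,\Delta)_\Q\subset\End_\Q(A_1)$; the dimension count $\dim A(T_\Delta)=2^{5-2}=8$ then forces $\dim A_1=4$, and the element $\mathbf{i}\in D$ with $\mathbf{i}^2=-1$ spans a copy of $\Q(\ii)$, exhibiting $A_1$ as a fourfold with multiplication by $\Q(\ii)$. (Should $(-1,\Delta)_\Q$ split, one instead regroups the four simple factors of dimension two into two of dimension four; the statement is unaffected.)

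The one remaining point, which I expect to be the main obstacle, is to identify the abstract factor $A_1$ with the prescribed $A_\Delta$ up to isogeny, since the Clifford computation determines $A_1$ only through its dimension and its endomorphism algebra and not through its isogeny class. Here I would argue Hodge-theoretically, in the spirit of Proposition~\ref{Lombardo}. The Kuga--Satake construction is, in the relevant sense, inverse to the $\mathsf{\Lambda}^2$--construction of \S\ref{sec:Tconstruct}: unwinding the spin representation on the minimal left ideals of $\cl{+}{T_\Delta}$ from which $A_1$ is built recovers $(T_\Delta,q)$ as the transcendental weight-$2$ Hodge structure attached to the fourfold $A_1$. By hypothesis the transcendental Hodge structure of $A_\Delta$ is likewise $T_\Delta$, and by \S\ref{sec:Tconstruct} the assignment $A\mapsto T(A)$ realises the isomorphism $\mathbf{M}\simeq\mathbf{D}_4/\so{T}{*}$ and is therefore faithful on isogeny classes. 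Hence $A_1$ and $A_\Delta$ define the same moduli point and are isogenous, and combining this with the computation above yields $A(T_\Delta)\sim A_\Delta^2$ with $(-1,\Delta)_\Q\subset\End_\Q(A_\Delta)$, as claimed.
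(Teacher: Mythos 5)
Your computation of the even Clifford algebra is correct and, apart from ordering the diagonal entries differently (you obtain $(-4\Delta,4)_\Q\otimes(16\Delta,-4)_\Q$ where the paper obtains $(4,-4)_\Q\otimes(-16,4\Delta)_\Q$; both reduce to $M_2((-1,\Delta)_\Q)$), it coincides with the paper's first step, as does your application of Theorem~\ref{endsdecompo} yielding $A(T_\Delta)\sim A_1^2$ with $A_1$ a fourfold and $(-1,\Delta)_\Q\subset\End_\Q(A_1)$. Your parenthetical about the split case is likewise consistent with the remark following the theorem in the paper.

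The gap is in the step you yourself flag as the main obstacle: the identification $A_1\sim A_\Delta$. You assert that ``unwinding the spin representation \dots recovers $(T_\Delta,q)$ as the transcendental weight-$2$ Hodge structure attached to $A_1$'', but this assertion is precisely a rank-$5$ analogue of Proposition~\ref{Lombardo} (\cite[Theorem 6.5]{Lom}), a statement whose rank-$6$ case is a theorem requiring a genuine proof; no argument for the rank-$5$ version is supplied, so at the crucial point you are assuming what has to be shown. There is also a prior obstruction: to apply the construction of \S\ref{sec:Tconstruct} to $A_1$ at all (and then to invoke faithfulness of $A\mapsto T(A)$ on isogeny classes), you need $A_1$ to be of $\Q(\ii)$-Weil type, i.e.\ the $\Q(\ii)$-action must have eigenvalue type $(2,2)$ on the tangent space and be compatible with a polarization; your Clifford computation only produces $\Q(\ii)\subset(-1,\Delta)_\Q\subset\End_\Q(A_1)$. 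The paper's proof bypasses both difficulties by a shorter route: since $T_\Delta$ is a sub-Hodge structure of the rank-$6$ structure $T(A_\Delta)$ (it contains the $(2,0)$-part), the Kuga-Satake variety $A(T_\Delta)$ is, up to isogeny, a factor of $A(T(A_\Delta))$, and Proposition~\ref{Lombardo} gives $A(T(A_\Delta))\sim A_\Delta^4$; hence $A_1^2\sim A(T_\Delta)$ is an isogeny factor of $A_\Delta^4$, and Poincar\'e reducibility together with the dimension count forces $A_1\sim A_\Delta$. If you wish to keep your Hodge-theoretic route, the missing content is exactly a proof of the inversion statement for the rank-$5$ structure; the functoriality argument just described is the efficient substitute the paper uses.
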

\proof
 We consider the Clifford algebra associated to the Hodge structure $T_\Delta$, since $U\cong_{\Q}\langle 2\rangle\oplus\langle -2\rangle$ the quadratic form is  $Q$=diag$(2,-2,-2,-2,2\Delta)$.
 For the corresponding Clifford algebra
  we deduce from Lemma~\ref{CalcGem} that 
\begin{eqnarray*}
\cl + Q &\cong & (4,-4)_\Q\otimes(-16,4\Delta)_\Q\\
&\cong & (1,-1)_\Q\otimes (-1,\Delta)_\Q\cong M_2((-1,\Delta)_\Q). 
\end{eqnarray*}
From Thm.~\ref{endsdecompo} the Kuga-Satake variety can be decomposed as $A\left(T_\Delta\right)\sim B^2$ where $B$ is an Abelian fourfold with $(-1,\Delta)_\Q$ contained in $\End_\Q(B)$.  Since $T_\Delta$ is a sub Hodge structure of $T(A_\Delta))$, the corresponding Kuga-Satake variety is a factor of $A(T(A_\Delta))\sim  A_\Delta^4$, i.e. $ B\sim A_\Delta$ so that $A(T_\Delta)\sim A^2_\Delta$. \qed \endproof
\begin{rmq}
The quaternion algebra $(-1,\Delta)_\Q$ has zero divisors  (i.e. $(-1,\Delta)_\Q\simeq M_2(\Q)$)   precisely when $\Delta$ is a sum of two squares  in $\Z$ which is the case if and only  if   all primes  $p\equiv 3 \bmod 4$ divide $\Delta$ with even power.   In these   cases the Abelian fourfold is isogeneous to a product $B^2$ with $B$ an Abelian  surface. It is an interesting open question if and how the  Kummer surface of $B$ and the K3 double plane are related.
This occurs for instance if $\Delta=1,2,4$ and for $\Delta=1$ we have a candidate for $B$. \end{rmq}

\subsection*{Acknowledgements}
We thank the anonymous referee for her/his detailed comments.
CP thanks the University of Turin as well as  the Riemann Center for Geometry and Physics of Leibniz Universit\"at Hannover for its  hospitality.
MS gratefully acknowledges support from ERC through StG 279723 (SURFARI).

\medskip

\noindent
Dipartimento di Matematica, Universit\`a di Torino, Via Carlo Alberto n.10, 10123 Torino, ITALY\\
giuseppe.lombardo@unito.it

\medskip

\noindent
Universit\'e de Grenoble I, D\'epartement de Math\'ematiques Institut Fourier, UMR 5582 du CNRS, 38402 Saint-Martin d'H\`eres Cedex, FRANCE\\
chris.peters@ujf-grenoble.fr

\medskip

\noindent Institut f\"{u}r Algebraische Geometrie, Leibniz Universit\"{a}t
  Hannover, Welfengarten 1, 30167 Hannover, GERMANY,\\
{schuett@math.uni-hannover.de}

\end{document}